\documentclass[preprint,3p,times]{elsarticle}
\usepackage{amsmath,amssymb,amsfonts,mathrsfs}
\usepackage{amsthm}
\usepackage[hidelinks]{hyperref}
\usepackage{enumitem}
\usepackage{array}
\usepackage{booktabs}
\usepackage{xcolor}
\newcommand{\rev}[1]{#1}
\hypersetup{
  pdftitle={Representation theorems for four classes of uninorms on bounded trellises},
  pdfauthor={Zhenyu Xiu},
  pdfkeywords={uninorm, bounded trellis, pseudo-order, directed reachability, interior operator, closure operator, representation theorem}
}
\biboptions{sort&compress}

\theoremstyle{plain}
\newtheorem{theorem}{Theorem}[section]
\newtheorem{lemma}[theorem]{Lemma}
\newtheorem{proposition}[theorem]{Proposition}
\newtheorem{corollary}[theorem]{Corollary}
\theoremstyle{definition}
\newtheorem{definition}[theorem]{Definition}
\newtheorem{example}[theorem]{Example}
\newtheorem{remark}[theorem]{Remark}

\journal{Fuzzy Sets and Systems}
\begin{document}
\begin{frontmatter}

\title{
Representation theorems for four classes of uninorms on bounded
trellises
\tnoteref{funding}}
\tnotetext[funding]{This work was supported by the National Natural Science Foundation of China under Grant Nos. 12671553 and 12271036.}
\author[label2]{Zhenyu Xiu\corref{cor1}}
\cortext[cor1]{Corresponding author.}
\ead{xyz198202@163.com}

\begin{abstract}

In this paper, we establish necessary and sufficient representation
theorems for the classes
$\widetilde{\mathcal{U}}_{\mathrm{top}}$,
$\widetilde{\mathcal{U}}_{\mathrm{bot}}$,
$\widetilde{\mathcal{U}}_{\mathrm{max}}^{r}$, and
$\widetilde{\mathcal{U}}_{\mathrm{min}}^{r}$ of uninorms on bounded
trellises, thereby extending the corresponding results for bounded
lattices. For a non-extremal neutral element $e$ belonging to no
non-trivial cycle, we systematically use a common decomposition of the
elements incomparable with $e$, exploiting the fact that every neutral
element of a uninorm is middle-transitive. We show that each uninorm in
$\widetilde{\mathcal{U}}_{\mathrm{top}}$ (respectively,
$\widetilde{\mathcal{U}}_{\mathrm{bot}}$) can be represented by an
interior operator (respectively, a closure operator), a component
uninorm, and an increasing, associative, and commutative operation.
Moreover, all these components are uniquely determined. Adding two
conditions in each case yields corresponding representations of the
subclasses $\widetilde{\mathcal{U}}_{\mathrm{top}}^{\star}$ and
$\widetilde{\mathcal{U}}_{\mathrm{bot}}^{\star}$.
For $\widetilde{\mathcal{U}}_{\mathrm{max}}^{r}$ (respectively,
$\widetilde{\mathcal{U}}_{\mathrm{min}}^{r}$), we identify the pairs in
$I_e^3\times I_e^3$ whose values lie in
$[0,e[\cup I_e^1$ (respectively, $]e,1]\cup I_e^2$), as well as those
whose value equals the neutral element $e$. The pairs producing $e$
satisfy a symmetric unique-partner condition, whereas the remaining
values are described by a partial operation. Together with regional
order compatibility and explicit associativity conditions, this
decomposition yields necessary and sufficient representations of both
classes.
 Finally, we compare the four classes, clarify their relationship with the
corresponding bounded-psoset classes, and show that their representations
specialize to their bounded-lattice counterparts under transitivity;
finite examples on proper
trellises illustrate the role of the additional compatibility  conditions.

\end{abstract}

\begin{keyword}
uninorm \sep bounded trellis
\sep interior operator \sep  regional order
compatibility  \sep representation theorem

\end{keyword}
\end{frontmatter}

\section{Introduction}\label{sec:introduction}

In 1996, Yager and Rybalov~\cite{RR96} introduced uninorms on the real unit
interval as a class of aggregation functions that generalizes both triangular
norms and triangular conorms. Their structure was subsequently developed by
Fodor, Yager, and Rybalov~\cite{FRR}, and their role in fuzzy-system modeling
was studied by Yager~\cite{Y01}. These developments form part of the broader
study of fuzzy aggregation and many-valued reasoning
\cite{DD85,DD00,PH98}. On the unit interval, single-point characterizations
of representable uninorms and ordinal-sum representation results were
established in~\cite{FodorDeBaets12,Mesiarova17a,Mesiarova17b}.

In 2015, Kara\c{c}al and Mesiar~\cite{FK15} extended uninorms from the real
unit interval to arbitrary bounded lattices. In this setting, elements
incomparable with the neutral element give rise to mixed regions that do not
occur on a chain.
The study of uninorms was subsequently extended from the real unit
interval to bounded lattices~\cite{BK14,FK15}.
Subsequent work developed
systematic construction and characterization methods
combined
t-norms, t-conorms, and other component operations on principal intervals
\cite{BK14,CKM16,FK17}. Further structural results and construction methods
made the role of the incomparable region increasingly explicit
\cite{Cayli19,Cayli19alt,DHQ19,DHu20,GD20,AM21,Cayli21}, while extremal
extensions and additional class characterizations were obtained in
\cite{XL20,SL22}.

Several other approaches have also been proposed. Ordinal sums were extended
from chains to bounded lattices and later studied within a unified framework
of partially ordered monoids~\cite{Saminger06,Dvorak22}. Closure and interior
operators were used to control mixed regions in uninorm constructions
\cite{OZ20,ZW21,CayliCI23}; related interior-operator methods were also used
for ordinal sums of t-norms~\cite{Ouyang22}. T-subnorms, t-subconorms, and
additive generators provided alternative component operations
\cite{Ji21,HJ22,HW21}. More recent methods start from uninorms on subintervals
or use comparable operators and triangular conorms
\cite{CEK23,XZ23,XZ25,XS26}. The representation of idempotent and internal
uninorms has provided further structural information
\cite{OZWB22,Ouyang24}, and recent results show explicitly that the set of
elements incomparable with the neutral element is essential
\cite{GGPF26}.

The characterization of specific classes of uninorms is particularly
relevant to the present work. Zhang et al.~\cite{HP21} characterized the
bounded-lattice classes $\mathcal{U}_{\mathrm{min}}$ and
$\mathcal{U}_{\mathrm{max}}$ and introduced several related subclasses. The
classes $\mathcal{U}_{\mathrm{min}}^{1}$ and
$\mathcal{U}_{\mathrm{max}}^{0}$ were subsequently represented
in~\cite{ZO26}.   Complete representation theorems for
$\mathcal{U}_{\mathrm{top}}$ and $\mathcal{U}_{\mathrm{bot}}$, and for
$\mathcal{U}_{\mathrm{max}}^{r}$ and
$\mathcal{U}_{\mathrm{min}}^{r}$, on bounded lattices were obtained
in~\cite{ZQ26tb,ZQ26r}. On bounded pseudo-ordered sets,
Kong and Liu~\cite{KL26} introduced reachability-based extensions of several
of these classes and provided corresponding construction and existence
results.  These works form the starting point of the present study.  The
question addressed here is how to obtain complete representations,
with unique recovery of all components, when the underlying trellis
relation is not transitive.


A trellis retains binary meets and joins but replaces the partial order of a
lattice by a reflexive and antisymmetric relation that is not necessarily
transitive~\cite{EF70,HS71,HS72,KG73}. Pseudo-ordered sets have a completion
theory of their own~\cite{CH22}. In aggregation theory, triangular norms on
bounded trellises were introduced in~\cite{LZ23}, and ordinal-sum
constructions revealed further effects of non-transitivity~\cite{JG25}.
Uninorms and their construction on bounded trellises were studied in
\cite{YK24,luo25,SZ26}. Xiu and Zheng also introduced and completely
characterized nullnorms on bounded trellises~\cite{XZNull25}.
More recent
results on extremal classes of uninorms on bounded trellises make the effect
of non-transitivity explicit~\cite{WDB26}.

The present results are not obtained by simply replacing the partial order in
the bounded-lattice representations with the pseudo-order of a trellis. In a
bounded lattice, successive comparisons can be combined by transitivity,
whereas on a proper trellis a directed path
$x=x_0\unlhd x_1\unlhd\cdots\unlhd x_n=y$ need not imply the direct
comparison $x\unlhd y$. Consequently, the region $I_e$ of elements incomparable with the
neutral element cannot be treated uniformly; instead, it must be
decomposed into the three subsets $I_e^1$, $I_e^2$, and $I_e^3$
according to the existence and direction of finite chains of
comparisons connecting these elements with $e$.

%

Moreover, monotonicity of the component
operations on their individual domains does not guarantee monotonicity of the
assembled piecewise operation: comparisons between pairs belonging to
different formula regions require additional regional order-compatibility
conditions, together with the appropriate join or meet compatibility in the
top and bottom cases.
Although bounded pseudo-ordered sets form a more general ambient setting,
some results in~\cite{KL26} were formulated primarily as
construction and existence results.  For $\mathcal{U}_{\mathrm{max}r}^{\mathrm{ps}}$
  and   $\mathcal{U}_{\mathrm{min}r}^{\mathrm{ps}}$,
 their specialization to bounded
trellises therefore does not by itself provide the complete
representation theorems  obtained here.



Under the standing assumption $e\in K\setminus\{0,1\}$,  our purpose is  to  establish necessary and sufficient representation
theorems for the classes
$\widetilde{\mathcal{U}}_{\mathrm{top}}$,
$\widetilde{\mathcal{U}}_{\mathrm{bot}}$,
$\widetilde{\mathcal{U}}_{\mathrm{max}}^{r}$, and
$\widetilde{\mathcal{U}}_{\mathrm{min}}^{r}$ of uninorms on bounded
trellises, thereby extending the corresponding results for bounded
lattices.  That is, we will determine exactly which components are associated with each of the four
classes,   identify the compatibility conditions under which these
components assemble into a uninorm, and   prove that the components are
uniquely recoverable from the resulting operation. More specifically,  for
$\widetilde{\mathcal{U}}_{\mathrm{top}}$ and
$\widetilde{\mathcal{U}}_{\mathrm{bot}}$, we obtain representations in terms
of an interior or closure operator and suitable component operations.
For $\widetilde{\mathcal{U}}_{\mathrm{max}}^{r}$ (respectively,
$\widetilde{\mathcal{U}}_{\mathrm{min}}^{r}$), we identify the pairs in
$I_e^3\times I_e^3$ whose values lie in $A_e$ (respectively, $B_e$) or
equal $e$; the remaining values are represented by a partial operation
defined on the corresponding residual domain.
For
$\widetilde{\mathcal{U}}_{\mathrm{max}}^{r}$ and
$\widetilde{\mathcal{U}}_{\mathrm{min}}^{r}$, we give exact piecewise
representations of the remaining behavior on
$I_e^3\times I_e^3$.
These four principal classes form two dual pairs and exhibit two
different representation patterns, while all four are governed by the
same decomposition associated with the neutral element.
 When the pseudo-order is transitive, these representations
reduce to their bounded-lattice counterparts; the proper-trellis examples illustrate the role of the additional
conditions and show, in particular, that regional order compatibility
cannot in general be omitted.



The remainder of this paper is organized as follows.
Section~\ref{sec:preliminaries} recalls the basic concepts and introduces
the regions determined by the neutral element.
Section~\ref{sec:part-one} establishes construction and representation
theorems for
$\widetilde{\mathcal{U}}_{\mathrm{top}}$ and
$\widetilde{\mathcal{U}}_{\mathrm{bot}}$, characterizes the subclasses
$\widetilde{\mathcal{U}}_{\mathrm{top}}^{\star}$ and
$\widetilde{\mathcal{U}}_{\mathrm{bot}}^{\star}$, and gives finite examples.
Section~\ref{sec:r-structural-properties} classifies the distinguished
pairs in $I_e^3\times I_e^3$, derives piecewise representations with
unique recovery of their components, and provides finite examples.
Section~\ref{sec:comparison} compares the four classes, relates them to
the corresponding classes on bounded psosets,  and establishes their
bounded-lattice specialization under transitivity.
%
Finally, Section~\ref{sec:conclusion} summarizes the main results and
presents directions for future research.

\section{Preliminaries}
\label{sec:preliminaries}

In this section, we recall the concepts of pseudo-orders, bounded trellises, lattices,
 interior and closure operators, t-norms, t-conorms, and
uninorms. We also introduce the notation for the common regions used throughout the
paper.

\begin{definition}[{\cite{HS71,HS72}}]\label{def:psoset}
A binary relation $\unlhd$ on a set $X$ is called a \emph{pseudo-order} if it
is reflexive and antisymmetric. The pair $\mathbb{S}=(X,\unlhd)$ is called a
\emph{pseudo-ordered set}, or a \emph{psoset}.
\end{definition}

We write $a\lhd b$ if $a\unlhd b$ and $a\ne b$, and write
$a\ntrianglelefteq b$ if $a\unlhd b$ does not hold. Elements $a,b$ are
\emph{incomparable}, denoted $a\parallel b$, if neither $a\unlhd b$ nor
$b\unlhd a$ holds, and $I_a=\{x\in X:x\parallel a\}$. A subset $L\subseteq
X$ is a \emph{lower set} if $y\in L$ and $x\unlhd y$ imply $x\in L$. A
subset $H\subseteq X$ is an \emph{upper set} if $x\in H$ and $x\unlhd y$
imply $y\in H$. A map between sub-psosets is \emph{order-preserving} if
it preserves every direct comparison.

We use the standard notions of minimal and maximal elements, smallest and
greatest elements, lower and upper bounds, infimum, and supremum as for
partially ordered sets~\cite{Gierz2003}. Antisymmetry guarantees uniqueness
whenever an infimum or a supremum exists. For a two-element set $\{a,b\}$,
these are denoted by $a\wedge b$ and $a\vee b$, respectively.

For $x,y\in X$, write $x\lesssim y$ if there is a finite sequence
$x=x_0\unlhd x_1\unlhd\cdots\unlhd x_n=y$. A nonempty subset $C$ is a
\emph{cycle} if every two elements are connected in both directions by such
sequences whose terms all lie in $C$. A
singleton cycle is trivial; antisymmetry implies that every non-trivial
cycle has at least three elements.

Following~\cite{LZ23}, we represent finite psosets by Hasse-type
diagrams. In all finite examples considered below, the strict part of
$\unlhd$ is acyclic; hence $\lesssim$ is a partial order. The cover
relations of $\lesssim$ are represented by upward solid edges; thus, an
upward solid path from $x$ to $y$ represents $x\lesssim y$. If
$x\lesssim y$ but $x\parallel y$ under $\unlhd$, then $x$ and $y$ are
joined by a dashed curve, indicating that the solid path does not correspond
to a direct comparison under $\unlhd$. Reflexive loops and direct
$\unlhd$-comparisons that are not covers of $\lesssim$ are not drawn
separately. The neutral element $e$ is shown as a shaded vertex.

\begin{definition}[{\cite{YK24}}]\label{def:bounded-psoset}
A psoset is called \emph{bounded} if it has a smallest element $0$ and a greatest
element $1$, so $0\unlhd x\unlhd1$ for every $x\in X$.
\end{definition}

\begin{definition}[{\cite{HS71,HS72,KG73}}]\label{def:trellis}
A psoset is called a $\wedge$-semi-trellis (resp.\
$\vee$-semi-trellis) if every pair has a meet (resp.\ a join). A psoset that
is both a $\wedge$-semi-trellis and a $\vee$-semi-trellis is called a
\emph{trellis} and is denoted by
$\mathbb{P}=(X,\unlhd,\wedge,\vee)$. In a trellis,
$a\unlhd b$ is equivalent to $a\wedge b=a$, or equivalently to
$a\vee b=b$. A bounded trellis is written
$\mathbb{P}=(X,\unlhd,\wedge,\vee,0,1)$ and is assumed here to have at
least three elements. If $\unlhd$ is transitive, then it is
a partial order and the trellis $\mathbb{P}$ is called a \emph{lattice}. A
bounded trellis whose pseudo-order is transitive is called a
\emph{bounded lattice}. A bounded trellis is called \emph{proper} if
$\unlhd$ is not transitive.
\end{definition}

Throughout this paper, $\mathbb{P}$ denotes the  bounded trellis.
To distinguish the trellis classes introduced below from their counterparts
on bounded lattices, we use
$\widetilde{\mathcal{U}}_{\mathrm{top}}$,
$\widetilde{\mathcal{U}}_{\mathrm{bot}}$,
$\widetilde{\mathcal{U}}_{\mathrm{max}}^{r}$, and
$\widetilde{\mathcal{U}}_{\mathrm{min}}^{r}$
for the former and reserve the corresponding untilded symbols for the latter.

\begin{definition}[{\cite{LZ23}}]\label{def:transitive-elements}
An element $a$ of a psoset is called \emph{right-transitive},
\emph{left-transitive}, or \emph{middle-transitive} if, respectively,
\[
 a\unlhd x\unlhd y\Rightarrow a\unlhd y,\qquad
 x\unlhd y\unlhd a\Rightarrow x\unlhd a,
\]
or
\[
 x\unlhd a\unlhd y\Rightarrow x\unlhd y.
\]
The corresponding sets of elements are denoted by $X^{\mathrm{rtr}}$,
$X^{\mathrm{ltr}}$, and $X^{\mathrm{mtr}}$. An element
$a$ is called transitive if it is right-, left-, and middle-transitive.
\end{definition}

\begin{definition}[{\cite{YK24}}]\label{def:interval}
Let $a,b\in X$ with $a\unlhd b$. The interval $[a,b]$ is defined by
\[
[a,b]=\{x\in X:a\unlhd x\unlhd b\}.
\]
The intervals $[a,b[$, $]a,b]$, and $]a,b[$ are defined analogously by
making the corresponding endpoint inequality strict.
For $a\in X$, define
\[
 I_a^1=\{x\in I_a:x\lesssim a\},\qquad
 I_a^2=\{x\in I_a:a\lesssim x\},
\]
and
\[
 I_a^3=\{x\in I_a:x\not\lesssim a\text{ and }a\not\lesssim x\}.
\]
Moreover, put $K=\{a\in X^{\mathrm{mtr}}:a\text{ belongs to no non-trivial cycle}\}$.

\end{definition}

\begin{lemma}[{\cite{XZNull25}}]\label{lem:reachability-partition}
If $a\in K$, then $I_a^1\cap I_a^2=\varnothing$.
\end{lemma}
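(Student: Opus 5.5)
We argue by contradiction. Suppose $a\in K$ and there is some $x\in I_a^1\cap I_a^2$. By the definitions of $I_a^1$ and $I_a^2$, we then have $x\lesssim a$ and $a\lesssim x$, and also $x\parallel a$, so in particular $x\neq a$. The plan is to turn these two reachability relations into a non-trivial cycle containing $a$. That contradicts $a\in K$. Middle-transitivity of $a$ is not even needed for this statement; only the "no non-trivial cycle" part of the definition of $K$ is used.

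First fix witnessing sequences
\[
x=x_0\unlhd x_1\unlhd\cdots\unlhd x_n=a
\quad\text{and}\quad
a=y_0\unlhd y_1\unlhd\cdots\unlhd y_m=x .
\]
Then set $C=\{x_0,\dots,x_n,y_0,\dots,y_m\}$. This set is nonempty and contains both $a$ and $x$. Concatenating the two sequences gives a closed walk
\[
a=y_0\unlhd\cdots\unlhd y_m=x=x_0\unlhd\cdots\unlhd x_n=a
\]
whose terms all lie in $C$.

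The key step is to check that $C$ is a cycle in the sense of the paper, i.e.\ that any two elements $u,v\in C$ are joined in both directions by $\unlhd$-sequences lying inside $C$. Both $u$ and $v$ occur on the closed walk. So starting at an occurrence of $u$ and following the walk forward (wrapping around through $a$ if necessary) reaches $v$, and symmetrically $v$ reaches $u$. Every term used lies in $C$, so this gives $u\lesssim v$ and $v\lesssim u$ inside $C$.

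Since $x\neq a$, the set $C$ has at least two elements and is therefore a non-trivial cycle containing $a$. This contradicts $a\in K$, so $I_a^1\cap I_a^2=\varnothing$. The only delicate point is the bookkeeping showing that the cycle condition holds for all pairs in $C$, not just for $a$ and $x$; the closed-walk argument handles this directly.
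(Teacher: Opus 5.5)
Your argument is correct. Concatenating the witnessing sequences for $x\lesssim a$ and $a\lesssim x$ gives a closed walk whose set of terms is a cycle containing both $a$ and $x\neq a$, hence a non-trivial cycle through $a$, and you are right that middle-transitivity plays no role. The paper cites this lemma from \cite{XZNull25} without reproving it, but your closed-walk reasoning is the same one it uses implicitly elsewhere (e.g.\ in the proof of Lemma~\ref{lem:r-reachability}, where an edge together with a return path is said to ``place $e$ in a non-trivial cycle'').
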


\begin{definition}[{\cite{LZ23}}]\label{def:operation-properties}
On $X\times X$, define the product pseudo-order by
\[
 (x_1,x_2)\preceq(y_1,y_2)
 \quad\Leftrightarrow\quad
 x_1\unlhd y_1\text{ and }x_2\unlhd y_2.
\]
For $P\subseteq X\times X$, a map $F:P\to X$ is called \emph{increasing} (or
jointly increasing with respect to $\preceq$) if
$\boldsymbol{x}\preceq\boldsymbol{y}$ for
$\boldsymbol{x},\boldsymbol{y}\in P$ implies
$F(\boldsymbol{x})\unlhd F(\boldsymbol{y})$. Throughout this paper,
``increasing'' for a total or partial binary map always has this joint
meaning; it does not mean merely that its one-variable sections are
order-preserving.
\end{definition}

Let $(Y,\unlhd,0_Y,1_Y)$ be a bounded psoset. A binary operation on $Y$ is
called a \emph{t-norm} (resp.\ a \emph{t-conorm}) if it is commutative,
associative, and jointly increasing with respect to the product pseudo-order
on $Y\times Y$, and has neutral element $1_Y$ (resp.\ $0_Y$)~\cite{LZ23}.

\begin{definition}[{\cite{Ouyang22}}]\label{def:interior-operator}
Let $(Y,\unlhd)$ be a psoset. A self-map $\iota:Y\to Y$ is called an
\emph{interior operator} if it is order-preserving,
$\iota(y)\unlhd y$, and $\iota^2(y)=\iota(y)$ for every $y\in Y$.
\end{definition}

\begin{definition}[{\cite{Erne09}}]\label{def:closure-operator}
Let $(Y,\unlhd)$ be a psoset. A self-map $\kappa:Y\to Y$ is called a
\emph{closure operator} if it is order-preserving,
$y\unlhd\kappa(y)$, and $\kappa^2(y)=\kappa(y)$ for every $y\in Y$.
\end{definition}

\begin{definition}[{\cite{YK24}}]\label{def:uninorm}
Let $(Y,\unlhd)$ be a psoset and let $e\in Y$. A binary operation
$U:Y\times Y\to Y$ is called a \emph{uninorm with neutral element $e$} if it is
commutative, associative, and jointly increasing with respect to the product
pseudo-order on $Y\times Y$, and satisfies $U(y,e)=y$ for every $y\in Y$.
\end{definition}

\begin{proposition}\label{prop:neutral-middle-transitive}
The neutral element of every uninorm on a psoset is middle-transitive.
\end{proposition}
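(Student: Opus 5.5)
We have to show: if $U$ is a uninorm on $(Y,\unlhd)$ with neutral element $e$, and $x\unlhd e\unlhd y$, then $x\unlhd y$. The natural approach is to apply joint monotonicity of $U$ to a suitable pair of comparable points in the product pseudo-order and then simplify using the neutral element.

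\textbf{Step 1 (the key comparison).} From $x\unlhd e$ and $e\unlhd y$ we get $(x,e)\preceq(e,y)$ in the product pseudo-order of Definition~\ref{def:operation-properties}. This is a coordinatewise direct comparison, so no transitivity is needed. Since $U$ is jointly increasing,
\[
U(x,e)\unlhd U(e,y).
\]

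\textbf{Step 2 (simplify).} By the neutral-element axiom, $U(x,e)=x$. By commutativity, $U(e,y)=U(y,e)=y$. Hence $x\unlhd y$, which is exactly middle-transitivity of $e$ in the sense of Definition~\ref{def:transitive-elements}.

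\textbf{Where the care is needed.} The only point to watch is that the argument uses \emph{joint} monotonicity, not merely monotonicity in each variable separately. With only separate monotonicity one would obtain a chain $x=U(x,e)\unlhd U(e,e)$ or similar, which cannot be collapsed without transitivity. The paper's convention in Definition~\ref{def:operation-properties}, that ``increasing'' means jointly increasing, is precisely what makes the one-step comparison available. Associativity and boundedness are not needed.
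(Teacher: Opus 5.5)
Your proof is correct and is essentially the paper's own argument: you compare $(x,e)\preceq(e,y)$, apply joint monotonicity, and simplify with neutrality and commutativity to get $x\unlhd y$. Your remark that joint (rather than separate) monotonicity is what makes the single-step comparison work is also accurate, and matches the paper's comment after the proof.
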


\begin{proof}
Let $U$ be a uninorm with neutral element $e$, and suppose that
$x\unlhd e\unlhd y$. Then $(x,e)\preceq(e,y)$. Joint monotonicity,
commutativity, and neutrality give
$
x=U(x,e)\unlhd U(e,y)=U(y,e)=y.
$
Thus $e$ is middle-transitive.
\end{proof}

Consequently, middle-transitivity of the neutral element follows from the
uninorm axioms under the joint notion of monotonicity used here. By contrast,
being distinct from the bounds and belonging to no non-trivial cycle are
additional restrictions. Thus the standing assumption
$e\in K\setminus\{0,1\}$ specifies precisely the non-extremal neutral elements that belong to no non-trivial cycle considered below.


Unless otherwise stated, fix $e\in K\setminus\{0,1\}$. We use the
following basic regions:
\[
A_e=[0,e[\cup I_e^1,\qquad
B_e=]e,1]\cup I_e^2,
\]
whereas $I_e^3$ is the remaining incomparable region. Their enlargements
are defined by
\[
\widehat A_e=A_e\cup I_e^3,\qquad
\widehat B_e=B_e\cup I_e^3.
\]
\rev{Their equivalent descriptions in terms of $\lesssim$ are proved in
Lemma~\ref{lem:r-reachability}. Since $e$ belongs to no non-trivial cycle,
these regions form the pairwise disjoint decomposition}
$
\rev{X=A_e\mathbin{\cup}\{e\}\mathbin{\cup}
B_e\mathbin{\cup}I_e^3.}
$
Each of these subsets is endowed with the pseudo-order induced from
$X$. In the notation of~\cite{KL26}, the first two regions are denoted by
$
A_e=[0,e[_{\lesssim}
\ \text{and}\
B_e=]e,1]_{\lesssim}.
$

\begin{lemma}\label{lem:r-reachability}
Let $H:X\times X\to X$ be increasing. If $x\lesssim y$ and $z\lesssim t$, then
$H(x,z)\lesssim H(y,t)$. Moreover,
$
 A_e=\{x\in X\setminus\{e\}\mid x\lesssim e\}$,
   and
 $B_e=\{x\in X\setminus\{e\}\mid e\lesssim x\}.
$
\end{lemma}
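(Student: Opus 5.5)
The lemma has two independent parts, and I will prove them separately.

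\emph{First part.} Assume $x\lesssim y$ and $z\lesssim t$, with chains
$x=x_0\unlhd x_1\unlhd\cdots\unlhd x_n=y$ and $z=z_0\unlhd\cdots\unlhd z_m=t$.
I will not try to compare $H(x,z)$ and $H(y,t)$ in one step, since $\unlhd$ need not be transitive. Instead I move one coordinate at a time. By reflexivity, $(x_{i},z)\preceq(x_{i+1},z)$, so the fact that $H$ is increasing gives $H(x_i,z)\unlhd H(x_{i+1},z)$ for each $i$. Likewise $(y,z_j)\preceq(y,z_{j+1})$ gives $H(y,z_j)\unlhd H(y,z_{j+1})$. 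Concatenating these steps produces the chain
\[
H(x,z)=H(x_0,z)\unlhd\cdots\unlhd H(x_n,z)=H(y,z)=H(y,z_0)\unlhd\cdots\unlhd H(y,z_m)=H(y,t),
\]
so $H(x,z)\lesssim H(y,t)$. The only point to check is that each single step compares pairs differing in one coordinate, which reflexivity handles.

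\emph{Second part.} I prove $A_e=\{x\neq e: x\lesssim e\}$; the statement for $B_e$ is dual.

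For ``$\subseteq$'': if $x\in[0,e[$, then $x\lhd e$, so $x\ne e$ and $x\lesssim e$ by a chain of length one. If $x\in I_e^1$, then $x\lesssim e$ by definition, and $x\neq e$ because $x\parallel e$ while $e\unlhd e$.

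For ``$\supseteq$'': let $x\ne e$ with $x\lesssim e$. There are two cases.
\begin{enumerate}[label=(\roman*)]
\item If $x\unlhd e$, then $x\in[0,e[$.
\item If $x\parallel e$, then $x\in I_e^1$.
\end{enumerate}
The remaining possibility is $e\unlhd x$ with $x\neq e$, i.e.\ $e\lhd x$. This is the only step needing care, and it is where the assumption $e\in K$ is used. Suppose $e\lhd x$ and $x\lesssim e$, witnessed by $x=x_0\unlhd\cdots\unlhd x_n=e$. Then the set $C=\{e,x_0,\dots,x_n\}$ has every pair of elements connected in both directions by chains lying in $C$: go around the loop $e\unlhd x_0\unlhd\cdots\unlhd x_n=e$. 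Since $x\ne e$, the set $C$ is a non-trivial cycle containing $e$. This contradicts $e\in K$, which excludes this case.

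Hence $A_e=\{x\in X\setminus\{e\}: x\lesssim e\}$, and dualizing gives the description of $B_e$.
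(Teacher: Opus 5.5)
Your proof is correct and takes essentially the same route as the paper. The first part uses the same coordinate-by-coordinate concatenation of chains. The second part uses the same case split, ruling out $e\lhd x$ because, together with a path from $x$ to $e$, it would place $e$ in a non-trivial cycle, contrary to $e\in K$.
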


\begin{proof}
Choose paths
\[
x=x_0\unlhd x_1\unlhd\cdots\unlhd x_m=y,
\qquad
z=z_0\unlhd z_1\unlhd\cdots\unlhd z_n=t.
\]
Since $H$ is jointly increasing, the comparisons
$(x_i,z)\preceq(x_{i+1},z)$ and
$(y,z_j)\preceq(y,z_{j+1})$ give the path
\[
H(x,z)=H(x_0,z)\unlhd\cdots\unlhd H(x_m,z)
=H(y,z_0)\unlhd\cdots\unlhd H(y,z_n)=H(y,t).
\]
Hence $H(x,z)\lesssim H(y,t)$.

If $x\in A_e$, then either $x\in[0,e[$ or $x\in I_e^1$, and in both
cases $x\ne e$ and $x\lesssim e$. Conversely, let $x\ne e$ and
$x\lesssim e$. If $x\unlhd e$, then $x\in[0,e[$. Otherwise,
$e\unlhd x$ is impossible, because this edge together with a path from
$x$ to $e$ would place $e$ in a non-trivial cycle. Hence $x\parallel e$,
and $x\lesssim e$ gives $x\in I_e^1$. This proves the identity for $A_e$.

The argument for $B_e$ is analogous. If $x\in B_e$, then
$e\lesssim x$. Conversely, if $x\ne e$ and $e\lesssim x$, then either
$e\unlhd x$, so $x\in]e,1]$, or $e\ntrianglelefteq x$. In the latter case,
$x\unlhd e$ would form a non-trivial cycle with a path from $e$ to $x$;
therefore $x\parallel e$ and $x\in I_e^2$. This proves the identity for
$B_e$.
\end{proof}

%

\section{Part I: \texorpdfstring{Representations}{Representations} of the classes
\texorpdfstring{$\widetilde{\mathcal{U}}_{\mathrm{top}}$ and $\widetilde{\mathcal{U}}_{\mathrm{bot}}$}
{Utop and Ubot}}
\label{sec:part-one}

In this part, we introduce the trellis classes
$\widetilde{\mathcal{U}}_{\mathrm{top}}$ and
$\widetilde{\mathcal{U}}_{\mathrm{bot}}$ by means of
the common regions. We then establish construction and representation theorems for both
classes and show that the component maps are uniquely determined. Finally, we
consider the subclasses
$\widetilde{\mathcal{U}}_{\mathrm{top}}^{\star}$ and
$\widetilde{\mathcal{U}}_{\mathrm{bot}}^{\star}$.

\subsection{Definitions in the common notation}

The following definitions adapt the bounded-lattice top and bottom classes
studied in~\cite{ZQ26tb} to the regions of a bounded trellis. The starred
refinements are consistent with the corresponding psoset subclasses
in~\cite{KL26}.

\begin{definition}\label{def:top-bottom-classes}
Let $e\in K\setminus\{0,1\}$.
We define $\widetilde{\mathcal{U}}_{\mathrm{top}}$ as the class of all
uninorms $U$ on $\mathbb{P}$ with neutral element $e$ satisfying
\begin{itemize}
  \item[$(\circledast)$] For every
  $x\in \widehat B_e$, there exists
  $q_x\in \widehat B_e$ such that
  \[
  U(x,y)=q_x
  \qquad\text{for all }y\in A_e.
  \]
\end{itemize}

Similarly, we define $\widetilde{\mathcal{U}}_{\mathrm{bot}}$ as the class of
all uninorms $U$ on $\mathbb{P}$ with neutral element $e$ satisfying
\begin{itemize}
  \item[$(\circledcirc)$] For every
  $x\in\widehat A_e$, there exists
  $p_x\in\widehat A_e$ such that
  \[
  U(x,y)=p_x
  \qquad\text{for all }y\in B_e.
  \]
\end{itemize}

In addition, $\widetilde{\mathcal{U}}_{\mathrm{top}}^{\star}$ is the subclass
of $\widetilde{\mathcal{U}}_{\mathrm{top}}$ consisting of all uninorms $U$
satisfying
\begin{enumerate}[label=\textup{(A\arabic*)}, itemsep=1pt,
                  parsep=0pt, topsep=8pt]
  \item For every $x\in I_e^2\cup I_e^3$,
  \[
  U(x,y)=x
  \qquad\text{for all }y\in A_e.
  \]
  \item For every $x\in]e,1]$, there exists $m_x\in]e,1]$ such that
  \[
  U(x,y)=m_x
  \qquad\text{for all }y\in A_e.
  \]
  \item For all $x\in I_e^2\cup I_e^3$ and $y\in]e,1]$,
  \[
  x\vee y=x\vee m_y.
  \]
\end{enumerate}

Dually, $\widetilde{\mathcal{U}}_{\mathrm{bot}}^{\star}$ is the subclass of
$\widetilde{\mathcal{U}}_{\mathrm{bot}}$ consisting of all uninorms $U$
satisfying
\begin{enumerate}[label=\textup{(B\arabic*)}, itemsep=1pt,
                  parsep=0pt, topsep=8pt]
  \item For every $x\in I_e^1\cup I_e^3$,
  \[
  U(x,y)=x
  \qquad\text{for all }y\in B_e.
  \]
  \item For every $x\in[0,e[$, there exists $n_x\in[0,e[$ such that
  \[
  U(x,y)=n_x
  \qquad\text{for all }y\in B_e.
  \]
  \item For all $x\in I_e^1\cup I_e^3$ and $y\in[0,e[$,
  \[
  x\wedge y=x\wedge n_y.
  \]
\end{enumerate}
\end{definition}

If $\unlhd$ is transitive, then
$I_e^1=I_e^2=\varnothing$ and $I_e^3=I_e$. Hence the definitions of
$\widetilde{\mathcal{U}}_{\mathrm{top}}^{\star}$ and
$\widetilde{\mathcal{U}}_{\mathrm{bot}}^{\star}$ above coincide with the
corresponding bounded-lattice classes
$\mathcal{U}_{\mathrm{top}}^{\star}$ and
$\mathcal{U}_{\mathrm{bot}}^{\star}$, respectively.

The following immediate consequences of monotonicity will be used below.
If $U\in\widetilde{\mathcal{U}}_{\mathrm{top}}$, then
$q_x\unlhd x$ for every $x\in\widehat B_e$; if
$U\in\widetilde{\mathcal{U}}_{\mathrm{bot}}$, then
$x\unlhd p_x$ for every $x\in\widehat A_e$. Thus,
\begin{equation}\label{eq:top-bottom-boundary-values}
q_x\unlhd x\quad(x\in\widehat B_e),
\qquad
x\unlhd p_x\quad(x\in\widehat A_e).
\end{equation}
Similarly, for
$\widetilde{\mathcal{U}}_{\mathrm{top}}^{\star}$ and
$\widetilde{\mathcal{U}}_{\mathrm{bot}}^{\star}$, respectively,
monotonicity yields
\[
m_x\unlhd x\quad(x\in]e,1]),
\qquad
x\unlhd n_x\quad(x\in[0,e[).
\]

The notation $A_e$, $B_e$, $I_e^3$, $\widehat A_e$, and $\widehat B_e$
is the common notation fixed in Section~\ref{sec:preliminaries}.

\begin{lemma}\label{lem:top-block-closure}
Let $e\in K\setminus\{0,1\}$.
Then the sets
$
A_e,\  \{e\},\  \widehat B_e
$
form a partition of $X$.
Moreover,
$\widehat B_e$ is an upper set: if $d\in \widehat B_e$ and $d\unlhd z$,
then $z\in \widehat B_e$. Finally, if $V:X\times X\to X$ is an increasing operation
with neutral element $e$, then $V(A_e\times A_e)\subseteq A_e$.
\end{lemma}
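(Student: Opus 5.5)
The lemma makes three claims, and I would prove them in order.

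\emph{Partition.} This follows from the decomposition $X=A_e\cup\{e\}\cup B_e\cup I_e^3$ recorded in Section~\ref{sec:preliminaries}, since $\widehat B_e=B_e\cup I_e^3$. To make pairwise disjointness explicit, I would use the reachability descriptions of Lemma~\ref{lem:r-reachability}:
\[
A_e=\{x\ne e: x\lesssim e\},\qquad B_e=\{x\ne e: e\lesssim x\},\qquad I_e^3\subseteq\{x\parallel e: x\not\lesssim e,\ e\not\lesssim x\}.
\]
\begin{itemize}
\item $A_e\cap I_e^3=\varnothing$ by the definition of $I_e^3$.
\item $A_e\cap B_e=\varnothing$, because $x\lesssim e\lesssim x$ with $x\ne e$ would put $e$ in a non-trivial cycle.
\item Coverage: take $x\ne e$. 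Either $x$ is $\unlhd$-comparable with $e$, so $x\in[0,e[$ or $x\in]e,1]$, or $x\in I_e$, which splits into $I_e^1$, $I_e^2$, $I_e^3$.
\end{itemize}

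\emph{Upper set.} Let $d\in\widehat B_e$ and $d\unlhd z$. We must exclude $z=e$ and $z\in A_e$.
\begin{itemize}
\item If $z=e$, then $d\unlhd e$ gives $d\lesssim e$. So $d\in A_e\cup\{e\}$, contradicting the partition.
\item If $z\in A_e$, then $d\unlhd z\lesssim e$ gives $d\lesssim e$, and $d\ne e$. By Lemma~\ref{lem:r-reachability} this means $d\in A_e$, again a contradiction.
\end{itemize}
The point is that $A_e\cup\{e\}=\{x: x\lesssim e\}$ is closed downward under single $\unlhd$-steps. Its complement $\widehat B_e$ is therefore upward closed.

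\emph{Closure under $V$.} Let $x,y\in A_e$, so $x\lesssim e$ and $y\lesssim e$. By the first part of Lemma~\ref{lem:r-reachability}, applied to the increasing $V$,
\[
V(x,y)\lesssim V(e,e)=e .
\]
So $V(x,y)\in A_e\cup\{e\}$, and it remains to exclude $V(x,y)=e$. I expect this to be the main step.

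\emph{Excluding $V(x,y)=e$.} Suppose $V(x,y)=e$. Monotonicity along a path $x=x_0\unlhd\cdots\unlhd x_m=e$ gives
\[
V(x,y)\lesssim V(e,y)=y .
\]
Hence $e\lesssim y$. Together with $y\lesssim e$ and $y\ne e$, this places $e$ in a non-trivial cycle, contradicting $e\in K$. Therefore $V(x,y)\in A_e$.

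Here neutrality is used only through $V(e,y)=y$ and $V(e,e)=e$, so commutativity of $V$ is not needed.
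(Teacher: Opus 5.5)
Your proof is correct and follows essentially the same route as the paper. The partition comes from the definitions together with the no-cycle condition, the upper-set property from the fact that $e$-reachability passes down a single $\unlhd$-step, and the closure from monotonicity along paths, with $V(x,y)=e$ excluded by a cycle argument. The differences are cosmetic: you invoke the reachability description in Lemma~\ref{lem:r-reachability} rather than Lemma~\ref{lem:reachability-partition}, and you exclude $e$ via $e\lesssim y$ where the paper uses $V(x,y)\lesssim V(x,e)=x$ to obtain $e\lesssim x$.
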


\begin{proof}
By Lemma \ref{lem:reachability-partition}, $I_e^1\cap I_e^2=\varnothing$.
Hence, by the definitions of
$A_e$ and $\widehat B_e$, the sets
$
A_e,\  \{e\},\  \widehat B_e
$
form a partition of $X$.

Let $d\in \widehat B_e$ and $d\unlhd z$. If $z\in A_e\cup\{e\}$, then
$d\lesssim e$. This contradicts the definition of $I_e^3$ when
$d\in I_e^3$; when $d\in B_e$, it combines with
$e\lesssim d$ to produce a non-trivial cycle containing $e$.
Therefore $z\in \widehat B_e$.

Now let $x,y\in A_e$. There is a directed path from $y$ to $e$.
No element of this path can lie in $\widehat B_e$, since such an element would
satisfy $d\lesssim e$, contrary to the preceding argument. Thus all
its intermediate elements lie in $A_e\cup\{e\}$. Monotonicity of $V$
along this path gives
$V(x,y)\lesssim V(x,e)=x\lesssim e$. If $V(x,y)=e$, then
$e\lesssim x\lesssim e$ yields a non-trivial cycle containing $e$;
if $V(x,y)\in \widehat B_e$, the preceding argument again gives a contradiction.
The disjoint decomposition therefore implies $V(x,y)\in A_e$.
\end{proof}

\subsection{Construction and representation of
\texorpdfstring{$\widetilde{\mathcal{U}}_{\mathrm{top}}$}{Utop}}

We first provide a construction of uninorms in
$\widetilde{\mathcal{U}}_{\mathrm{top}}$ from an interior operator, a
component uninorm, and an increasing, associative, and commutative operation.
The converse representation is established afterward.

\begin{theorem}[Construction of $\widetilde{\mathcal{U}}_{\mathrm{top}}$]\label{thm:top-construction}
Let $\mathbb{P}=(X,\unlhd,\wedge,\vee,0,1)$ be a bounded trellis, and let
$e\in K\setminus\{0,1\}$. Consider an interior operator
$\iota:\widehat B_e\to\widehat B_e$, a uninorm
$U_L:(A_e\cup\{e\})\times(A_e\cup\{e\})\to A_e\cup\{e\}$ with neutral element $e$, and an
increasing, associative, and commutative operation
$Q:\widehat B_e\times\widehat B_e\to\widehat B_e$. Assume that the following conditions
hold:
\begin{enumerate}[itemsep=1pt, parsep=0pt, topsep=8pt]
  \item[$\mathrm{(i)}$]  $x,y\in A_e$, $z\in \widehat B_e$ with $x\unlhd z \Rightarrow$ \(U_L(x,y)\lhd \iota(z)\);
  \item[$\mathrm{(ii)}$]  $x\in A_e$ and $y\in ]e,1] \Rightarrow x\lhd \iota(y)$;
  \item[$\mathrm{(iii)}$]  $x,r\in A_e$ and $y,s\in \widehat B_e$ with $x\unlhd y $ and $ r\unlhd s \Rightarrow$ \(U_L(x,r)\unlhd Q(y,s)\);
  \item[$\mathrm{(iv)}$]  $x\in A_e$, $y\in \widehat B_e$ and $s\in ]e,1]$ with $x\unlhd y \Rightarrow$ \(x\unlhd Q(y,s)\);
  \item[$\mathrm{(v)}$]  $x,y\in \widehat B_e$ with $x\unlhd y \Rightarrow$ \(\iota(x)\unlhd y\);
  \item[$\mathrm{(vi)}$]  $x,y,s\in \widehat B_e$ with $x\unlhd y \Rightarrow$ \(\iota(x)\unlhd Q(y,s)\);
  \item[$\mathrm{(vii)}$]  $x,y\in ]e,1] \Rightarrow$ \(e\unlhd Q(x,y)\);
  \item[$\mathrm{(viii)}$]  $x,y\in \widehat B_e$ and $s\in ]e,1]$ with $x\unlhd y \Rightarrow$ \(x\unlhd Q(y,s)\);
  \item[$\mathrm{(ix)}$]  $x,y\in \widehat B_e \Rightarrow$ \(\iota(Q(x,y))=Q(\iota(x),y)\);
  \item[$\mathrm{(x)}$]  $x\in [0,e[$, $y\in A_e$ and $z\in \widehat B_e$ with $y\unlhd z \Rightarrow$ $U_L(x,y)\lhd z$.
\end{enumerate}
The absence of the hypothesis $x\unlhd y$ in \textup{(ii)} is
essential: $(x,e)\preceq(x,y)$ for every $x\in A_e$ and
$y\in]e,1]$.
Define $U:X\times X\to X$ by
\begin{equation}\label{eq:top-construction}
\begin{aligned}
U(x,y) =
\begin{cases}
U_L(x,y), & \text{if } (x,y) \in (A_e\cup\{e\})\times(A_e\cup\{e\}), \\
x, & \text{if } (x,y) \in (\widehat B_e)\times \{e\}, \\
y, & \text{if } (x,y) \in \{e\}\times(\widehat B_e), \\
\iota(x), & \text{if } (x,y) \in (\widehat B_e)\times (A_e), \\
\iota(y), & \text{if } (x,y) \in (A_e)\times(\widehat B_e), \\
Q(x,y), & \text{if } (x,y) \in \widehat B_e\times\widehat B_e. \\
\end{cases}
\end{aligned}
\end{equation}
Then $U\in \widetilde{\mathcal{U}}_{\mathrm{top}}$.
\end{theorem}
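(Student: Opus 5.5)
We verify the uninorm axioms and the defining condition $(\circledast)$ directly from the piecewise formula \eqref{eq:top-construction}, using throughout the partition $X=A_e\cup\{e\}\cup\widehat B_e$ of Lemma~\ref{lem:top-block-closure}.

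\emph{Well-definedness, neutrality, commutativity, $(\circledast)$.} The six regions cover $X\times X$. They overlap only at the points $(e,e)$, $(x,e)$ and $(e,y)$, and there the formulas agree because $U_L$ has neutral element $e$. Neutrality $U(x,e)=x$ is read off directly. Commutativity follows from the symmetry of the regions together with the commutativity of $U_L$ and $Q$. For $(\circledast)$ we take $q_x=\iota(x)\in\widehat B_e$ for $x\in\widehat B_e$.

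\emph{Monotonicity.} It suffices to check $U(x,y)\unlhd U(x',y')$ whenever $(x,y)\preceq(x',y')$. By commutativity we may order the arguments. The key tool is that $\widehat B_e$ is an upper set (Lemma~\ref{lem:top-block-closure}), so $x\unlhd x'$ with $x\in\widehat B_e$ forces $x'\in\widehat B_e$. This cuts down the admissible pairs of regions to transitions ``upward'' from $A_e$ or $e$ into $\widehat B_e$. We then run a case table over pairs of regions.
\begin{itemize}
\item[] Same region: monotonicity comes from $U_L$, from $\iota$, or from $Q$.
\item[] $U_L$ region to the $\iota$ region: use (i) if the changed coordinate lies in $A_e$, (x) if it is the comparison to $z$ with $x\in[0,e[$, and (ii) when the starting value is $x$ itself, i.e.\ $y=e$. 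In the last case $x\in A_e$ reaches $(x,y')$ with $y'\in]e,1]$, and $e\unlhd y'$ follows from middle-transitivity.
\item[] $U_L$ region to the $Q$ region: (iii) and (iv).
\item[] $\iota$ region to the $Q$ region: (vi), with (viii) or (v) for the boundary rows where one coordinate is $e$.
\item[] $\{e\}\times\widehat B_e$ to the $Q$ region: (vii) and (viii).
\end{itemize}
Where strict $\lhd$ appears in the hypotheses it only guarantees comparability, so it is harmless. I expect this bookkeeping, especially the boundary cases involving $e$, to be tedious but routine.

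\emph{Associativity.} This is the main obstacle. We show $U(U(x,y),z)=U(x,U(y,z))$ by distributing $x,y,z$ among $A_e$, $\{e\}$, $\widehat B_e$. Any argument equal to $e$ is trivial, and commutativity reduces the remaining cases to the number $k$ of arguments lying in $\widehat B_e$.
\begin{itemize}
\item[] $k=0$: both sides are computed in $U_L$, since $U_L(A_e\times A_e)\subseteq A_e$ by Lemma~\ref{lem:top-block-closure}, and associativity of $U_L$ applies.
\item[] $k=1$ with $z\in\widehat B_e$: the left side is $\iota(z)$ and the right side is $\iota(\iota(z))=\iota(z)$ by idempotence. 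The other placements reduce to this one by commutativity.
\item[] $k=2$ with $x,y\in\widehat B_e$, $z\in A_e$: the left side is $\iota(Q(x,y))$ and the right side is $Q(x,\iota(y))$, and these agree by (ix) and commutativity. The mixed placement $x\in\widehat B_e$, $y\in A_e$, $z\in\widehat B_e$ gives $Q(\iota(x),z)$ versus $Q(x,\iota(z))$, and both equal $\iota(Q(x,z))$ by (ix).
\item[] $k=3$: associativity of $Q$.
\end{itemize}
Here one must make sure that $\iota$ and $Q$ take values in $\widehat B_e$, so that the outer evaluation falls in the intended region; this holds by their codomains. One must also check the case where $Q(x,y)$ might equal $e$, which cannot happen because $e\notin\widehat B_e$.

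With all the axioms verified, $U$ is a uninorm with neutral element $e$ satisfying $(\circledast)$, so $U\in\widetilde{\mathcal{U}}_{\mathrm{top}}$.
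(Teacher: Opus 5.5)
Your proposal follows the same route as the paper. Monotonicity is a region-by-region check driven by conditions (i)--(x), the upper-set property of $\widehat B_e$ and middle-transitivity of $e$. Associativity is then sorted by how many arguments lie in $\widehat B_e$, using idempotence of $\iota$, condition (ix), and associativity of $U_L$ and $Q$. Your associativity part is correct, including the reductions by commutativity. The monotonicity table, however, assigns some ingredients to the wrong transitions and needs correcting before it is a proof.

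\textbf{The missing transition.} You do not list the transition from $(A_e\cup\{e\})\times(A_e\cup\{e\})$ into $\{e\}\times\widehat B_e$, where the target value is $s$, not $\iota(s)$. Consider $(x,r)\preceq(e,s)$ with $x\in A_e$, so that $x\in[0,e[$.
If $r\in A_e$, condition (x) gives $U_L(x,r)\lhd s$.
If $r=e$, middle-transitivity applied to $x\unlhd e\unlhd s$ gives $x\unlhd s$.
These are the paper's cases 1.5 and 2.3.

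\textbf{What belongs in the $\iota$ region.} You attached both of those ingredients to the $\iota$ region instead. There (x) is of no use, since it bounds $U_L(x,r)$ by $s$ rather than by $\iota(s)$. Only two conditions are needed:
(i) for $r\in A_e$, applied to $r\unlhd s$ and followed by commutativity of $U_L$;
(ii) for $r=e$.
Moreover, $e\unlhd y'$ is part of the hypothesis $(x,e)\preceq(x,y')$, not something middle-transitivity produces. Similarly, (vii) handles $(e,e)\preceq(y,s)$ with $y,s\in]e,1]$, which starts in the $U_L$ region rather than in $\{e\}\times\widehat B_e$.

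\textbf{Two smaller points.} First, the six regions of \eqref{eq:top-construction} are pairwise disjoint, so there is no overlap to check. Second, Lemma~\ref{lem:top-block-closure} is stated for increasing operations on all of $X$. You should therefore apply it to $U$ once monotonicity is established, or redo its path argument for $U_L$, as the paper does. The inclusion $U_L(A_e\times A_e)\subseteq A_e$ is actually needed in your case $k=1$, to guarantee $U(U_L(x,y),z)=\iota(z)$ rather than $z$. For $k=0$ the codomain of $U_L$ already suffices.
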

\begin{proof}
By the definition of $U$, it is straightforward to verify that $U$ is
commutative and that $e$ is its neutral element. Thus, it remains to prove
that $U$ is increasing and associative.

I. Monotonicity:
To prove that \( U(x,r) \unlhd U(y, s) \), consider \( x,y, r, s \in X \) satisfying \( x\unlhd y \) and \( r \unlhd s \).
The argument splits into the following cases:

1. $x,r\in A_e$. Then $y,s\in X$.

\ \ 1.1. $y,s\in A_e$. Then
\(U(x,r ) = U_L(x,r) \unlhd U_L(y,s) = U (y,s)\).

\ \ 1.2. $y\in A_e$ and $s=e$. Then
\(U(x,r ) = U_L(x,r) \unlhd U_L(y,s) = U (y,s)\).
The case $s\in A_e$ and $y=e$ is similar to 1.2.

\ \ 1.3. $y\in A_e$ and $s\in \widehat B_e$. Then, by condition $\mathrm{(i)}$,
\(U(x,r ) = U_L(x,r) \unlhd \iota(s) = U (y,s)\).
The case $s\in A_e$ and
$y\in \widehat B_e$ is similar to 1.3.

\ \ 1.4. $y=s=e$. Then
\(U(x,r ) = U_L(x,r) \unlhd U_L(y,s) = U (y,s)\).

\ \ 1.5. $y=e$ and $s\in \widehat B_e$. Here $x\unlhd e$ and
$x\in A_e$ imply $x\in[0,e[$, while $r\unlhd s$ supplies the other
comparison required in condition~\textup{(x)}. Hence
\(U(x,r ) = U_L(x,r) \unlhd s = U (y,s)\).
The case $s=e$ and $y\in \widehat B_e$ is similar to 1.5.

\ \ 1.6. $y,s\in \widehat B_e$. Then, by condition $\mathrm{(iii)}$,
\(U(x,r ) = U_L(x,r) \unlhd Q(y,s) = U (y,s)\).

2. $x\in A_e$ and $r=e$. Then $y\in X$ and $s\in[e,1]$.

\ \ 2.1. $y\in A_e\cup\{e\}$ and $s=e$. Then
\(U(x,r ) = x \unlhd y = U (y,s)\).

\ \ 2.2. $y\in A_e$ and $s\in ]e,1]$. By condition~\textup{(ii)},
\(U(x,r ) =x \lhd \iota(s) =U(y,s)\).

\ \ 2.3. $y=e$ and $s\in ]e,1]$. Since $e$ is middle-transitive,
$x\unlhd e\unlhd s$ implies $x\unlhd s$, and hence
\(U(x,r ) = x \unlhd s = U(y,s)\).

\ \ 2.4. $y\in \widehat B_e$ and $s=e$. Then
\(U(x,r ) = x \unlhd y = U(y,s)\).

\ \ 2.5. $y\in \widehat B_e$ and $s\in ]e,1]$. Then, by condition~\textup{(iv)},
\(U(x,r ) = x \unlhd Q(y,s) = U(y,s)\).
The case $r\in A_e$ and $x=e$ is similar to 2.

3. $x\in A_e$ and $r\in \widehat B_e$. Then $y\in X$ and, by Lemma~\ref{lem:top-block-closure}, $s\in \widehat B_e$.

\ \ 3.1. $y\in A_e$ and $s\in \widehat B_e$. Then
\(U(x,r ) = \iota(r) \unlhd \iota(s) = U (y,s)\).

\ \ 3.2. $y= e$ and $s\in \widehat B_e$. Then, by condition $\mathrm{(v)}$,
\(U(x,r ) = \iota(r) \unlhd s = U (y,s)\).

\ \ 3.3. $y,s\in \widehat B_e$. Then, by condition~\textup{(vi)} and the
commutativity of $Q$,
\(U(x,r ) = \iota(r) \unlhd Q (y,s) = U (y,s)\).
The case $r\in A_e$ and $x\in \widehat B_e$ is similar to 3.

4. $x=r=e$. Then $y,s\in[e,1]$.

\ \ 4.1. $y=s=e$. Then
\(U(x,r ) = e = U (y,s)\).

\ \ 4.2. $y=e$ and $s\in ]e,1]$. Then
\(U(x,r ) = e \unlhd s = U (y,s)\).
The case $s=e$ and $y\in ]e,1]$ is similar to 4.2.

\ \ 4.3. $y,s\in ]e,1]$. Then, by condition $\mathrm{(vii)}$,
\(U(x,r ) = e \unlhd Q(y,s) = U (y,s)\).

5. $x=e$ and $r\in \widehat B_e$. Then $y\in  [e,1]$ and, by Lemma~\ref{lem:top-block-closure}, $s\in \widehat B_e$.\nopagebreak[4]

\ \ 5.1. $y=e$ and $s\in \widehat B_e$. Then
\(U(x,r ) = r \unlhd s = U (y,s)\).

\ \ 5.2. $y\in ]e,1]$ and $s\in \widehat B_e$. Then, by
condition~\textup{(viii)} and the commutativity of $Q$,
\(U(x,r ) = r \unlhd Q(y,s) = U (y,s)\).
The case $r=e$ and $x\in \widehat B_e$ is similar to 5.

6. $x,r\in \widehat B_e$. Then $y,s\in \widehat B_e$ by Lemma~\ref{lem:top-block-closure}, and hence
\(U(x,r ) = Q(x,r ) \unlhd Q(y,s) = U (y,s)\).

Therefore, $U(x, r) \unlhd U(y, s)$ holds for all $x, y, r, s \in X$ with $x \unlhd y$ and $r \unlhd s$. Thus, $U$ is increasing.

II. Associativity:
Let \( x, y, z\in X\).
We need to prove that \( U(x,U(y,z))=U(U(x,y), z) \).

We first record a closure property needed below:
\begin{equation}\label{eq:top-lower-block-closure}
U_L(A_e\times A_e)\subseteq A_e.
\end{equation}
Indeed, let $x,y\in A_e$. As in the proof of
Lemma~\ref{lem:top-block-closure}, there is a directed path from $y$ to $e$
whose vertices lie in $A_e\cup\{e\}$. The monotonicity of $U_L$ along this path yields \(U_L(x,y)\lesssim U_L(x,e)=x\lesssim e\).
Since the codomain of $U_L$ is $A_e\cup\{e\}$, the only possible value
outside $A_e$ is $e$. If $U_L(x,y)=e$, then
$e\lesssim x\lesssim e$, which produces a non-trivial cycle containing
$e$, contrary to $e\in K$. This proves~\eqref{eq:top-lower-block-closure}.

If $e\in\{x,y,z\}$, associativity follows from neutrality. If all
three elements lie in $A_e$, it follows from the associativity of
$U_L$ and~\eqref{eq:top-lower-block-closure}; if all three lie in $\widehat B_e$,
it follows from the associativity of $Q$.
The remaining proof can be divided into the following two cases:

1. Let $x\in A_e$.

\ \ \ 1.1. $y \in A_e$ and $ z \in \widehat B_e$. Since $\iota$ is an interior operator on $\widehat B_e$, we have
\begin{align*}
U(x,U(y,z))
 &=U(x,\iota(z))=\iota(\iota(z))\\
 &=\iota(z)=U(U_L(x,y),z)\\
 &=U(U(x,y),z).
\end{align*}

\ \ \ 1.2. $y\in\widehat B_e$.\nopagebreak[4]

\ \ \ \ \ \ 1.2.1. $ z \in A_e$. Then, because $\iota$ is an interior operator on $\widehat B_e$, we have
\begin{align*}
U(x,U(y,z))
 &=U(x,\iota(y))=\iota(\iota(y))\\
 &=\iota(y)=\iota(\iota(y))\\
 &=U(\iota(y),z)=U(U(x,y),z).
\end{align*}

\ \ \ \ \ \ 1.2.2. $ z \in \widehat B_e$. Then, by condition~(ix) and the commutativity of $Q$, we have
\begin{align*}
U(x,U(y,z))
 &=U(x,Q(y,z))=\iota(Q(y,z))\\
 &=Q(\iota(y),z)=U(\iota(y),z)\\
 &=U(U(x,y),z).
\end{align*}

2. Let $x\in\widehat B_e$.

\ \ \ 2.1. $y\in A_e$.

\ \ \ \ \ \ 2.1.1. $ z \in A_e$. Since $\iota$ is an interior operator on $\widehat B_e$, we have
\begin{align*}
U(x,U(y,z))
 &=U(x,U_L(y,z))=\iota(x)\\
 &=\iota(\iota(x))=U(\iota(x),z)\\
 &=U(U(x,y),z).
\end{align*}

\ \ \ \ \ \ 2.1.2. $ z \in \widehat B_e$. Then, by
condition~\textup{(ix)} and the commutativity of $Q$, we have
\begin{align*}
U(x,U(y,z))
 &=U(x,\iota(z))=Q(x,\iota(z))\\
 &=Q(\iota(z),x)=\iota(Q(z,x))\\
 &=\iota(Q(x,z))=Q(\iota(x),z)\\
 &=U(\iota(x),z)=U(U(x,y),z).
\end{align*}
\ \ \ 2.2. $y\in\widehat B_e$.

\ \ \ \ \ \ 2.2.1. $ z \in A_e$. Then, by condition~(ix) and the commutativity of $Q$, we have
\begin{align*}
U(x,U(y,z))
 &=U(x,\iota(y))=Q(x,\iota(y))\\
 &=\iota(Q(x,y))=U(Q(x,y),z)\\
 &=U(U(x,y),z).
\end{align*}
\ \ \ \ \ \ 2.2.2. $ z \in \widehat B_e$. Then we have
\begin{align*}
U(x,U(y,z))
 &=U(x,Q(y,z))=Q(x,Q(y,z))\\
 &=Q(Q(x,y),z)=U(Q(x,y),z)\\
 &=U(U(x,y),z).
\end{align*}
Hence $U$ is associative.

Finally, if $d\in \widehat B_e$ and $a\in A_e$, then
$U(d,a)=\iota(d)\in \widehat B_e$. Hence condition $(\circledast)$ in
Definition~\ref{def:top-bottom-classes} holds (with $q_d=\iota(d)$), and
$U\in\widetilde{\mathcal{U}}_{\mathrm{top}}$.
\end{proof}

\begin{theorem}[Representation of $\widetilde{\mathcal{U}}_{\mathrm{top}}$]\label{thm:top-representation}
Let \(U\) be a uninorm on \(\mathbb{P}\) with neutral element
$e\in K\setminus\{0,1\}$. Then the following statements are equivalent:
\begin{enumerate}[label=\textup{(\arabic*)}, itemsep=1pt, parsep=0pt, topsep=8pt]
    \item \( U \in \widetilde{\mathcal{U}}_{\mathrm{top}} \).
    \item \( U \) can be constructed via the method described in Theorem~\ref{thm:top-construction}.
\end{enumerate}
Moreover, the three components are uniquely determined by $U$:
\[
\iota(x)=U(0,x)\quad(x\in \widehat B_e),\qquad
U_L=U|_{(A_e\cup\{e\})\times(A_e\cup\{e\})},\qquad
Q=U|_{\widehat B_e\times\widehat B_e}.
\]
\end{theorem}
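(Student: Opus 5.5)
The implication $(2)\Rightarrow(1)$ is exactly Theorem~\ref{thm:top-construction}, so the work lies in $(1)\Rightarrow(2)$ and in the uniqueness statement. Let $U\in\widetilde{\mathcal{U}}_{\mathrm{top}}$. Since $e\neq 0$, we have $0\in A_e$, so it is natural to define $\iota(x)=U(0,x)$ for $x\in\widehat B_e$. By $(\circledast)$, $\iota(x)=q_x=U(a,x)$ for every $a\in A_e$, and $\iota(x)\in\widehat B_e$. We then put $U_L=U|_{(A_e\cup\{e\})^2}$ and $Q=U|_{\widehat B_e\times\widehat B_e}$. With these choices, $U$ visibly has the piecewise form \eqref{eq:top-construction}: neutrality gives the rows involving $e$, and $(\circledast)$ gives the mixed rows. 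It remains to check three things: the components have the required types, they satisfy (i)--(x), and they are forced.

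\textbf{Types of the components.}
\begin{itemize}
\item[$\iota$:] $\iota$ is order-preserving because $(0,x)\preceq(0,y)$ whenever $x\unlhd y$. The inequality $\iota(x)\unlhd x$ is \eqref{eq:top-bottom-boundary-values}. For idempotence, use associativity: $\iota(\iota(x))=U(0,U(0,x))=U(U(0,0),x)$. Lemma~\ref{lem:top-block-closure} gives $U(0,0)\in A_e$, so by $(\circledast)$ this equals $q_x=\iota(x)$.
\item[$U_L$:] $U_L$ maps into $A_e\cup\{e\}$ by Lemma~\ref{lem:top-block-closure} together with neutrality. It is therefore a uninorm on $A_e\cup\{e\}$ with neutral element $e$.
\item[$Q$:] The one genuinely new point is that $Q$ takes values in $\widehat B_e$. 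For $x,y\in\widehat B_e$ we have $(x,0)\preceq(x,y)$, hence $\iota(x)=U(x,0)\unlhd U(x,y)$. Since $\iota(x)\in\widehat B_e$ and $\widehat B_e$ is an upper set (Lemma~\ref{lem:top-block-closure}), it follows that $U(x,y)\in\widehat B_e$. Increasingness, associativity and commutativity of $Q$ are then inherited from $U$.
\end{itemize}

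\textbf{Conditions (i)--(x).} Each condition should follow from one monotonicity comparison of $U$, except (ix), which follows from associativity. The comparisons I plan to use are:
\begin{itemize}
\item[(i)] $(x,y)\preceq(z,y)$;
\item[(ii)] $(x,e)\preceq(x,y)$;
\item[(iii)] $(x,r)\preceq(y,s)$;
\item[(iv)] $(x,e)\preceq(y,s)$;
\item[(v)] $(x,0)\preceq(y,e)$, using $0\unlhd e$;
\item[(vi)] $(x,0)\preceq(y,s)$;
\item[(vii)] $(e,e)\preceq(x,y)$;
\item[(viii)] $(x,e)\preceq(y,s)$;
\item[(x)] $(x,y)\preceq(e,z)$, using $x\unlhd e$.
\end{itemize}
For (ix), compute
\[
\iota(Q(x,y))=U(0,U(x,y))=U(U(0,x),y)=Q(\iota(x),y),
\]
which is legitimate because $\iota(x)\in\widehat B_e$.

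\textbf{Strict inequalities.} Conditions (i), (ii) and (x) demand $\lhd$ rather than $\unlhd$. In each case the left side lies in $A_e$: this uses closure of $U_L$ on $A_e\times A_e$ from Lemma~\ref{lem:top-block-closure}. The right side lies in $\widehat B_e$. Since $A_e\cap\widehat B_e=\varnothing$, the two sides differ, so the inequality is strict.

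\textbf{Uniqueness.} Suppose $U$ arises from any triple $(\iota',U_L',Q')$ via \eqref{eq:top-construction}. Reading off the formula forces $U_L'$ and $Q'$ to be the stated restrictions. Since $0\in A_e$, it also forces $\iota'(x)=U(0,x)$.

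\textbf{Expected main obstacle.} I expect the only delicate step to be the verification that all values land in the correct blocks: $Q(\widehat B_e\times\widehat B_e)\subseteq\widehat B_e$, and the disjointness needed for the strict inequalities. The natural route via chains of comparisons would need transitivity, which is not available. The plan therefore relies on a single direct comparison with $\iota(x)$ combined with the upper-set property of $\widehat B_e$, which does not need transitivity.
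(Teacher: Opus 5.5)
Your proposal is correct and follows essentially the same route as the paper: the same components $\iota(x)=U(0,x)$, $U_L$, $Q$, the same upper-set argument for $Q(\widehat B_e\times\widehat B_e)\subseteq\widehat B_e$, the same single monotonicity comparisons for (i)--(viii) and (x), associativity for (ix), and the disjointness $A_e\cap\widehat B_e=\varnothing$ for the strict inequalities. The only difference is cosmetic and lies in the idempotence of $\iota$: you use $U(0,0)\in A_e$ together with $(\circledast)$, whereas the paper observes directly that $U(0,0)=0$; both arguments are valid.
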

\begin{proof}
\noindent\textbf{$(1 \Rightarrow 2)$}
Let \( U \in \widetilde{\mathcal{U}}_{\mathrm{top}} \).
Define the functions \(\iota,U_L,Q\) by
\[
\iota(x)=U(0,x)\quad(x\in \widehat B_e),\qquad
U_L=U|_{(A_e\cup\{e\})\times(A_e\cup\{e\})},\qquad
Q=U|_{\widehat B_e\times\widehat B_e}.
\]
Condition $(\circledast)$ and $0\in A_e$ show that
$\iota(d)=U(d,0)=q_d\in \widehat B_e$ for every $d\in \widehat B_e$. By
Lemma~\ref{lem:top-block-closure}, $U(A_e\times A_e)\subseteq A_e$; neutrality
then shows that $A_e\cup\{e\}$ is closed under $U$. Hence the
restriction $U_L$ is well defined on this set.
If $x\unlhd y$ in $\widehat B_e$, monotonicity gives
$\iota(x)=U(0,x)\unlhd U(0,y)=\iota(y)$, so $\iota$ is order-preserving.

First, we show that $\iota$ is an interior operator on $\widehat B_e$. Since $\iota$ is an order-preserving map on $\widehat B_e$, it remains to prove that
for \( x \in \widehat B_e \), we have
\begin{equation}\label{eq:top-interior-identities}
 \iota(x)\unlhd x, \ \text{and} \ \iota(\iota(x)) = \iota(x).
\end{equation}
We verify the two assertions in equation~\eqref{eq:top-interior-identities}
separately.
\begin{enumerate}[itemsep=1pt, parsep=0pt, topsep=8pt]
\item[(a)]   $\iota(x)\unlhd x$.

  Since $0\unlhd e$, monotonicity gives
  $\iota(x)=U(0,x)\unlhd U(e,x)=x$.
\item[(b)]  $\iota(\iota(x)) = \iota(x)$.

  Monotonicity and the fact that $0$ is the least element give
  $U(0,0)\unlhd U(e,0)=0$, hence $U(0,0)=0$. Therefore,
  associativity gives
\begin{align*}
\iota(\iota(x))
 &=U(0,\iota(x))=U(0,U(0,x))\\
 &=U(U(0,0),x)=U(0,x)\\
 &=\iota(x).
\end{align*}
\end{enumerate}

Second, we show that $U(\widehat B_e\times\widehat B_e)\subseteq \widehat B_e$. By monotonicity of $U$,
\(\iota(x)=U(x,0)\unlhd U(x,y)\) for $x,y\in \widehat B_e$.
Since $\iota(x)\in\widehat B_e$ and $\widehat B_e$ is an upper set by
Lemma~\ref{lem:top-block-closure}, we have
\(U(x,y)\in\widehat B_e\) for $x,y\in\widehat B_e$.
Thus $Q=U|_{\widehat B_e\times\widehat B_e}$ is indeed a map from
$\widehat B_e\times\widehat B_e$ into $\widehat B_e$.
Consequently, the restrictions $U_L$ and $Q$ inherit
monotonicity, commutativity, and associativity from $U$, and $U_L$
has neutral element $e$.
\rev{Thus $\iota$ is an interior operator, $U_L$ is a uninorm with
neutral element $e$, and $Q$ is an increasing, associative, and
commutative operation, as required in
Theorem~\ref{thm:top-construction}; it remains to verify conditions
\textup{(i)--(x)}.}


Third, we show that $U$ can be represented as in
equation~\eqref{eq:top-construction}.
By the definitions of the three restrictions, the following identities hold:
\begin{equation}\label{eq:top-lower-restriction}
U(x,y)=U_L(x,y),\ \text{for}\ (x,y)\in (A_e\cup\{e\})\times(A_e\cup\{e\}),
\end{equation}
\(U(x,y)=x,\ \text{for}\ (x,y)\in \widehat B_e \times\{e\}\),
\(U(x,y)=y,\ \text{for}\ (x,y)\in \{e\}\times\widehat B_e, \ \text{and}\)
\(U(x,y)=Q(x,y),\ \text{for}\ (x,y)\in \widehat B_e\times\widehat B_e\).
Moreover, by $(\circledast)$ in Definition~\ref{def:top-bottom-classes}, for $(x,y)\in A_e\times \widehat B_e$, we have
\begin{equation}\label{eq:top-mixed-recovery}
U(x,y)=U(0,y)=\iota(y).
\end{equation}
Similarly, for $(x,y)\in\widehat B_e\times A_e$, we obtain
\(U(x,y)=\iota(x)\).
Therefore, $U$ can be expressed as in
equation~\eqref{eq:top-construction}.

Fourth, we verify that condition~(i) in Theorem~\ref{thm:top-construction} holds.
It suffices to show that for $x,y\in A_e$ and $z\in \widehat B_e$ with
$x\unlhd z$, one has $U_L(x,y)\lhd \iota(z)$.
By monotonicity and commutativity of $U$,
$U_L(x,y)=U(y,x)\unlhd U(y,z)=\iota(z)$.
Since $U_L(x,y)\in A_e$, $\iota(z)\in \widehat B_e$, and
$A_e\cap\widehat B_e=\varnothing$ by
Lemma~\ref{lem:top-block-closure}, equality is impossible. Hence
\(U_L(x,y)\lhd \iota(z)\).

Fifth, we verify condition~(ii).
Let $x\in A_e$ and $y\in ]e,1]$. Since $(x,e)\preceq(x,y)$, monotonicity of $U$ gives
\(x=U(x,e)\unlhd U(x,y)=\iota(y)\).
Since $x\in A_e$ and $\iota(y)\in \widehat B_e$, equality is impossible by
Lemma~\ref{lem:top-block-closure}; therefore \(x\lhd \iota(y)\).

Sixth, we verify condition~(iii).
For $x,r\in A_e$ and $y,s\in \widehat B_e$ with $x\unlhd y$ and
$r\unlhd s$, we prove that $U_L(x,r)\unlhd Q(y,s)$.
By monotonicity of $U$, $U_L(x,r)=U(x,r)\unlhd U(y,s)=Q(y,s)$.

Seventh, we verify condition~(iv).
For $x\in A_e$, $y\in \widehat B_e$, and $s\in ]e,1]$ with $x\unlhd y$,
we show that $x\unlhd Q(y,s)$.
Indeed,
\(x=U(x,e)\unlhd U(y,s)=Q(y,s)\).

Eighth, we verify condition~(v).
For $x,y\in \widehat B_e$ with $x\unlhd y$, we prove that
\(\iota(x)\unlhd y\).
By monotonicity of $U$,
\(\iota(x)=U(x,0)\unlhd U(y,e)=y\).

Ninth, we verify condition~(vi).
For $x,y,s\in \widehat B_e$ with $x\unlhd y$, we obtain
\(\iota(x)\unlhd Q(y,s)\).
Indeed,
\(\iota(x)=U(x,0)\unlhd U(y,s)=Q(y,s)\).

Tenth, we verify condition~(vii).
For $x,y\in ]e,1]$, we show that
\(e\unlhd Q(x,y)\).
By monotonicity of $U$,
\(e=U(e,e)\unlhd U(x,y)=Q(x,y)\).

Eleventh, we verify condition~(viii).
For $x,y\in \widehat B_e$ and $s\in ]e,1]$ with $x\unlhd y$, we prove that
\(x\unlhd Q(y,s)\).
Indeed,
\(x=U(x,e)\unlhd U(y,s)=Q(y,s)\).

Twelfth, we verify condition~(ix).
For $x,y\in \widehat B_e$, we show that
\(\iota(Q(x,y))=Q(\iota(x),y)\).
By associativity of $U$,
\begin{align*}
\iota(Q(x,y))
 &=U(0,Q(x,y))=U(0,U(x,y))\\
 &=U(U(0,x),y)=U(\iota(x),y)\\
 &=Q(\iota(x),y).
\end{align*}

Finally, we verify condition~(x).
For $x\in [0,e[$, $y\in A_e$, and $z\in \widehat B_e$ with $y\unlhd z$, we prove that
\(U_L(x,y)\lhd z\).
By monotonicity of $U$,
\(U_L(x,y)=U(x,y)\unlhd U(e,z)=z\).
Lemma~\ref{lem:top-block-closure} gives $U_L(x,y)\in A_e$, whereas
$z\in \widehat B_e$; hence equality is impossible and $U_L(x,y)\lhd z$.
\rev{Therefore all the hypotheses and conditions \textup{(i)--(x)} of
Theorem~\ref{thm:top-construction} are satisfied.}

\par\medskip
\noindent\textbf{$(2 \Rightarrow 1)$}
The converse direction follows directly from Theorem~\ref{thm:top-construction}.

Finally, formula~\eqref{eq:top-construction} recovers $\iota$, $U_L$, and $Q$ by the
three displayed restrictions in the statement, proving uniqueness.
\end{proof}

The following example presents a non-trivial interior operator
and shows that the inclusion
$\widetilde{\mathcal{U}}_{\mathrm{top}}^{\star}\subseteq
\widetilde{\mathcal{U}}_{\mathrm{top}}$ can be proper.

\begin{example}
\label{ex:finite-utop-nonstar}
Let $Y=\{0,a,e,b,c,d,1\}$, and define the pseudo-order $\unlhd$ by
the following incidence matrix:
\[
\begin{array}{c|ccccccc}
\unlhd&0&a&e&b&c&d&1\\ \hline
0&1&1&1&1&1&1&1\\
a&0&1&1&1&0&1&1\\
e&0&0&1&1&0&1&1\\
b&0&0&0&1&1&1&1\\
c&0&0&0&0&1&1&1\\
d&0&0&0&0&0&1&1\\
1&0&0&0&0&0&0&1
\end{array}
\]
Its Hasse-type diagram is shown in Figure~\ref{fig:hasse-utop-nonstar}.

\begin{figure}[!htbp]
\centering
\includegraphics[width=0.30\textwidth]{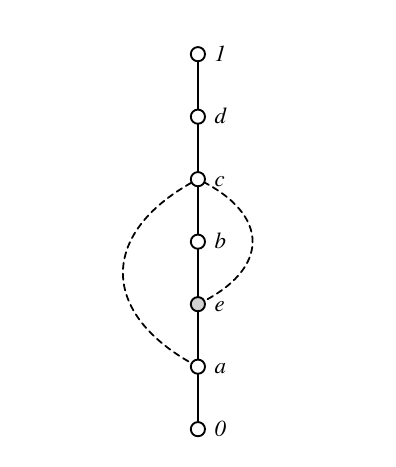}
\caption{\rev{Hasse-type diagram of $(Y,\unlhd)$ in
Example~\ref{ex:finite-utop-nonstar}.}}
\label{fig:hasse-utop-nonstar}
\end{figure}

The only incomparable pairs, apart from symmetry, are $(a,c)$ and
$(e,c)$. Their meets are $0$, while
$
a\vee c=e\vee c=d.
$
For every comparable pair, the smaller element is its meet and the
larger element is its join. Hence every pair has a meet and a join, and
$
\mathbb C_t=(Y,\unlhd,\wedge,\vee,0,1)
$
is a bounded trellis. It is a proper trellis, since
\[
e\unlhd b\unlhd c
\qquad\text{but}\qquad
e\ntrianglelefteq c.
\]
The element $e$ is middle-transitive and belongs to no non-trivial
cycle. Moreover,
\[
I_e^1=I_e^3=\varnothing,\qquad I_e^2=\{c\},\qquad
A_e=\{0,a\},\qquad \widehat B_e=\{b,c,d,1\}.
\]

Define $\iota:\widehat B_e\to \widehat B_e$ by
\[
\iota(b)=b,\qquad \iota(c)=b,\qquad \iota(d)=d,\qquad \iota(1)=1.
\]
Thus $\iota$ is a non-identity interior operator. Let
\[
U_L(x,y)=x\wedge y\qquad(x,y\in[0,e]=\{0,a,e\})
\]
and define $Q:\widehat B_e\times\widehat B_e\to \widehat B_e$ by
\[
Q(x,y)=
\begin{cases}
1, & \text{if }1\in\{x,y\},\\
d, & \text{otherwise}.
\end{cases}
\]
The operation $Q$ is non-constant, increasing, commutative, and
associative. The relation induced on $\widehat B_e$ is the chain
$b\lhd c\lhd d\lhd1$. Consequently, the interior-operator axioms for
$\iota$ and the algebraic properties of $Q$ follow directly from the two
displayed definitions.

We briefly indicate why the compatibility conditions of
Theorem~\ref{thm:top-construction} are non-vacuous here. One has
\[
\iota(\widehat B_e)=\{b,d,1\},\qquad Q(\widehat B_e\times\widehat B_e)=\{d,1\},
\]
and every element of $A_e$ is below $b,d$, and $1$. These observations
verify conditions \textup{(i)--(iv)}; moreover, $e\unlhd d$ and
$e\unlhd1$ verify condition \textup{(vii)}. Conditions
\textup{(v), (vi), and (viii)} follow from the displayed chain. For
condition \textup{(ix)}, both sides equal $1$ when one of the relevant
arguments is $1$, and otherwise both sides equal $d$. Finally, for
condition \textup{(x)}, if $y=0$, then $U_L(x,y)=0\lhd z$ for every
$z\in \widehat B_e$; if $y=a$, its antecedent forces
$z\in\{b,d,1\}$, and $U_L(x,a)\in\{0,a\}\lhd z$. Thus all ten
conditions of Theorem~\ref{thm:top-construction} hold.

Formula~\eqref{eq:top-construction} gives
\[
\begin{array}{c|ccccccc}
U&0&a&e&b&c&d&1\\ \hline
0&0&0&0&b&b&d&1\\
a&0&a&a&b&b&d&1\\
e&0&a&e&b&c&d&1\\
b&b&b&b&d&d&d&1\\
c&b&b&c&d&d&d&1\\
d&d&d&d&d&d&d&1\\
1&1&1&1&1&1&1&1
\end{array}
\]
Therefore $U\in\widetilde{\mathcal{U}}_{\mathrm{top}}$. In particular, for every
$x\in \widehat B_e$ and $y\in A_e$ one has $U(x,y)=\iota(x)$, which also displays
the recovery formula $\iota(x)=U(0,x)$ for a genuinely non-trivial interior
operator. On the other hand, \(c\in I_e^2\) and \(U(c,0)=b\ne c\).
Condition \textup{(A1)} in Definition~\ref{def:top-bottom-classes} fails.
By contrast, $\iota(y)=y$ for every $y\in]e,1]=\{b,d,1\}$, so
\textup{(A2)} holds with $m_y=y$, and \textup{(A3)} reduces to the identity
$x\vee y=x\vee y$. Thus \textup{(A1)} is the only one of
Conditions~\textup{(A1)--(A3)} that fails, and hence
$
U\notin\widetilde{\mathcal{U}}_{\mathrm{top}}^{\star}.
$
Together with
$\widetilde{\mathcal{U}}_{\mathrm{top}}^{\star}\subseteq
\widetilde{\mathcal{U}}_{\mathrm{top}}$, this proves that, on the bounded trellis
$\mathbb C_t$,
$
\widetilde{\mathcal{U}}_{\mathrm{top}}^{\star}
\subsetneq\widetilde{\mathcal{U}}_{\mathrm{top}}.
$
\end{example}

\subsection{The subclass
\texorpdfstring{$\widetilde{\mathcal{U}}_{\mathrm{top}}^{\star}$}{Utop-star}}

\rev{We next specialize the preceding representation to
$\widetilde{\mathcal{U}}_{\mathrm{top}}^{\star}$ by imposing conditions
\textup{(T1)} and \textup{(T2)} in addition to the hypotheses of
Theorem~\ref{thm:top-construction}.}

\begin{theorem}[Construction of $\widetilde{\mathcal{U}}_{\mathrm{top}}^{\star}$]
\label{th:utop-star-construction}
Let $\mathbb{P}=(X,\unlhd,\wedge,\vee,0,1)$ be a bounded trellis and
let $e\in K\setminus\{0,1\}$. Suppose that the maps $\iota$, $U_L$, and $Q$
satisfy all the hypotheses and conditions \textup{(i)--(x)} of
Theorem~\ref{thm:top-construction}. Assume, in addition, that
\begin{enumerate}[label=\textup{(T\arabic*)},leftmargin=*,itemsep=2pt]
\item $\iota(x)=x$ for every $x\in I_e^2\cup I_e^3$;
\item $x\vee y=x\vee \iota(y)$ for all
$x\in I_e^2\cup I_e^3$ and $y\in ]e,1]$.
\end{enumerate}
Then $\iota(]e,1])\subseteq]e,1]$, and the operation $U:X\times X\to X$ defined by
formula~\eqref{eq:top-construction} is a uninorm on $\mathbb{P}$ with
neutral element $e$. Moreover,
$U\in\widetilde{\mathcal{U}}_{\mathrm{top}}^{\star}$.
\end{theorem}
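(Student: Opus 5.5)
Since all hypotheses and conditions (i)--(x) of Theorem~\ref{thm:top-construction} are assumed, that theorem already shows that $U$ defined by \eqref{eq:top-construction} is a uninorm with neutral element $e$ and that $U\in\widetilde{\mathcal{U}}_{\mathrm{top}}$. Moreover, $U(x,y)=\iota(x)$ for all $x\in\widehat B_e$ and $y\in A_e$. It therefore remains to prove the inclusion $\iota(]e,1])\subseteq]e,1]$ and then to verify conditions (A1)--(A3) of Definition~\ref{def:top-bottom-classes}, taking $m_x=\iota(x)$ for $x\in]e,1]$.

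\textbf{The inclusion $\iota(]e,1])\subseteq]e,1]$.} Let $y\in]e,1]$. Since $\iota$ maps into $\widehat B_e=]e,1]\cup I_e^2\cup I_e^3$, it suffices to exclude $\iota(y)\in I_e^2\cup I_e^3$. Suppose that $w:=\iota(y)\in I_e^2\cup I_e^3$. By (T1), $\iota(w)=w$, so idempotence gives nothing new, and we use (T2) with $x=w$. This gives $w\vee y=w\vee\iota(y)=w\vee w=w$, and hence $y\unlhd w$. On the other hand, $w=\iota(y)\unlhd y$ by the interior-operator axiom. Antisymmetry then yields $w=y\in]e,1]$, which contradicts $w\in I_e^2\cup I_e^3$, since these regions are disjoint from $]e,1]$. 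Therefore $\iota(y)\in]e,1]$.

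\textbf{Conditions (A1)--(A3).} For (A1), let $x\in I_e^2\cup I_e^3\subseteq\widehat B_e$ and $y\in A_e$. Then $U(x,y)=\iota(x)=x$ by (T1). For (A2), let $x\in]e,1]$ and set $m_x:=\iota(x)$. For every $y\in A_e$ we have $U(x,y)=\iota(x)=m_x$, and $m_x\in]e,1]$ by the inclusion just proved. For (A3), the value $m_y$ is determined by $U$, because $0\in A_e$ gives $m_y=U(y,0)=\iota(y)$. Hence (A3) is precisely (T2). Consequently $U\in\widetilde{\mathcal{U}}_{\mathrm{top}}^{\star}$.

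\textbf{Main obstacle.} The only non-formal step is the inclusion $\iota(]e,1])\subseteq]e,1]$, and the key idea there is to apply (T2) with $x=\iota(y)$ itself. Beyond that, the argument uses only that, in a trellis, $w\vee y=w$ is equivalent to $y\unlhd w$, together with antisymmetry, so non-transitivity causes no difficulty.
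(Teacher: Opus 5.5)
Your proof is correct and follows the paper's argument essentially step for step. You invoke Theorem~\ref{thm:top-construction}, obtain (A1) from (T1), and exclude $\iota(y)\in I_e^2\cup I_e^3$ by applying (T2) with $x=\iota(y)$ and then using $\iota(y)\unlhd y$ and antisymmetry (the paper phrases this as $z\vee y=y$ versus $z\vee y=z$), after which $m_y=\iota(y)$ gives (A2) and (A3).
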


\begin{proof}
By Theorem~\ref{thm:top-construction}, $U$ is a uninorm with neutral element $e$ and
$U\in\widetilde{\mathcal{U}}_{\mathrm{top}}$. Let
$x\in I_e^2\cup I_e^3$ and $y\in A_e$. By
formula~\eqref{eq:top-construction} and condition~\textup{(T1)},
\(U(x,y)=\iota(x)=x\).
We next prove that
$\iota(]e,1])\subseteq]e,1]$, which will allow us to take
$m_y=\iota(y)$ in \textup{(A2)}.
Let $y\in]e,1]$. Since
$\iota(y)\in\widehat B_e$, an image outside $]e,1]$ would satisfy
$z=\iota(y)\in I_e^2\cup I_e^3$.
Since $z=\iota(y)$ and $\iota(y)\unlhd y$, we have
$z\unlhd y$, and hence $z\vee y=y$.
Condition~\textup{(T2)}, applied with $x=z$, gives
$
z\vee y=z\vee\iota(y)=z.
$
Together with $z\vee y=y$, this yields $z=y$, which is impossible because
$z\in I_e^2\cup I_e^3$ whereas $y\in]e,1]$.
Consequently, $\iota(]e,1])\subseteq]e,1]$. Put $m_y=\iota(y)$. Then
formula~\eqref{eq:top-construction} gives
\(U(y,a)=m_y\) for $a\in A_e$,
and condition~\textup{(T2)} gives
\(x\vee y=x\vee m_y\) for $x\in I_e^2\cup I_e^3$ and $y\in]e,1]$.
Thus conditions \textup{(A1)--(A3)} in Definition~\ref{def:top-bottom-classes} are
satisfied, and hence $U\in\widetilde{\mathcal{U}}_{\mathrm{top}}^{\star}$.
\end{proof}

\begin{theorem}[Representation of $\widetilde{\mathcal{U}}_{\mathrm{top}}^{\star}$]
\label{th:utop-star-representation}
Let $U$ be a uninorm on
$\mathbb{P}=(X,\unlhd,\wedge,\vee,0,1)$ with neutral element
$e\in K\setminus\{0,1\}$. Then the following statements are equivalent:
\begin{enumerate}[label=\textup{(\arabic*)},leftmargin=*,itemsep=2pt]
\item $U\in\widetilde{\mathcal{U}}_{\mathrm{top}}^{\star}$.
\item $U$ can be constructed via the method described in
Theorem~\ref{th:utop-star-construction}.
\end{enumerate}
Moreover, the components occurring in this construction are uniquely
determined by $U$ as follows:
\[
\iota(x)=U(0,x)\quad\bigl(x\in \widehat B_e\bigr),\qquad
U_L=U\big|_{(A_e\cup\{e\})\times(A_e\cup\{e\})},
\qquad
Q=U\big|_{\widehat B_e\times\widehat B_e}.
\]
\end{theorem}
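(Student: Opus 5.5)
The direction $(2\Rightarrow1)$ is exactly Theorem~\ref{th:utop-star-construction}, so the real work is $(1\Rightarrow2)$ together with uniqueness. Given $U\in\widetilde{\mathcal{U}}_{\mathrm{top}}^{\star}$, we have in particular $U\in\widetilde{\mathcal{U}}_{\mathrm{top}}$. Theorem~\ref{thm:top-representation} then supplies the components
\[
\iota(x)=U(0,x),\qquad U_L=U|_{(A_e\cup\{e\})\times(A_e\cup\{e\})},\qquad Q=U|_{\widehat B_e\times\widehat B_e}.
\]
It shows that $\iota$ is an interior operator on $\widehat B_e$, $U_L$ is a uninorm with neutral element $e$, and $Q$ is increasing, associative and commutative. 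It also shows that conditions (i)--(x) hold and that $U$ is given by formula~\eqref{eq:top-construction}. It therefore only remains to verify (T1) and (T2) for this $\iota$.

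For (T1), let $x\in I_e^2\cup I_e^3$. Since $0\in A_e$, condition (A1) gives $\iota(x)=U(0,x)=U(x,0)=x$.

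For (T2), fix $y\in]e,1]$. Condition (A2) gives $m_y\in]e,1]$ with $U(y,a)=m_y$ for every $a\in A_e$. Taking $a=0$ identifies $m_y=U(y,0)=\iota(y)$; this is also consistent with $(\circledast)$, namely $q_y=m_y$. Condition (A3) then reads $x\vee y=x\vee m_y=x\vee\iota(y)$ for all $x\in I_e^2\cup I_e^3$, which is exactly (T2). So $\iota,U_L,Q$ satisfy every hypothesis of Theorem~\ref{th:utop-star-construction}, and formula~\eqref{eq:top-construction} reproduces $U$.

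Uniqueness follows as in Theorem~\ref{thm:top-representation}: any triple producing $U$ through formula~\eqref{eq:top-construction} must satisfy $\iota(x)=U(0,x)$ for $x\in\widehat B_e$ (the $A_e\times\widehat B_e$ branch with $0\in A_e$), and $U_L$ and $Q$ must be the two stated restrictions.

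The only point needing care is identifying the abstract element $m_y$ of (A2) with $\iota(y)$. This works because $0\in A_e$, which makes the recovered value unambiguous. Otherwise the proof is a direct bookkeeping reduction to Theorem~\ref{thm:top-representation} and Theorem~\ref{th:utop-star-construction}.
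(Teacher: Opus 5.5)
Your proposal is correct and follows the paper's proof essentially step for step. You reduce to Theorem~\ref{thm:top-representation}, obtain (T1) from (A1) using $0\in A_e$, identify $m_y=U(y,0)=\iota(y)$ via (A2) and then read off (T2) from (A3); the direction $(2\Rightarrow1)$ comes from Theorem~\ref{th:utop-star-construction}, and uniqueness is read off from formula~\eqref{eq:top-construction}.
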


\begin{proof}
$(1\Rightarrow 2)$ Let
$U\in\widetilde{\mathcal{U}}_{\mathrm{top}}^{\star}$. Since
$\widetilde{\mathcal{U}}_{\mathrm{top}}^{\star}\subseteq
\widetilde{\mathcal{U}}_{\mathrm{top}}$, Theorem~\ref{thm:top-representation} yields an interior
operator $\iota$, a uninorm $U_L$, and an increasing, commutative, and
associative operation $Q$ satisfying all the hypotheses and conditions
of Theorem~\ref{thm:top-construction}, such that $U$ is given by formula~\eqref{eq:top-construction}.

It remains to verify the two additional conditions in
Theorem~\ref{th:utop-star-construction}. If
$x\in I_e^2\cup I_e^3$, then condition \textup{(A1)} in
Definition~\ref{def:top-bottom-classes} and $0\in[0,e[$ give
\(\iota(x)=U(0,x)=U(x,0)=x\).
For $y\in]e,1]$, condition \textup{(A2)} gives $m_y\in]e,1]$ with
$\iota(y)=U(y,0)=m_y$. Condition \textup{(A3)} therefore yields
\(x\vee y=x\vee m_y=x\vee\iota(y)\) for $x\in I_e^2\cup I_e^3$ and $y\in]e,1]$.
Thus the assumptions of Theorem~\ref{th:utop-star-construction} hold, and
$U$ is obtained by that construction.

$(2\Rightarrow 1)$ This implication follows directly from
Theorem~\ref{th:utop-star-construction}.

The displayed formulas for $\iota$, $U_L$, and $Q$ follow immediately from
formula~\eqref{eq:top-construction}; therefore, no two different triples of components
can induce the same uninorm $U$.
\end{proof}

The following example illustrates
Theorem~\ref{th:utop-star-representation} on a proper bounded trellis.

\begin{example}
\label{ex:finite-utop-star}
Let $X=\{0,a,e,b,c,1\}$. Define a relation $\unlhd$ by the following
incidence matrix, where an entry $1$ means that the element indexing
the row is related to the element indexing the column:
\[
\begin{array}{c|cccccc}
\unlhd&0&a&e&b&c&1\\ \hline
0&1&1&1&1&1&1\\
a&0&1&1&1&0&1\\
e&0&0&1&1&0&1\\
b&0&0&0&1&1&1\\
c&0&0&0&0&1&1\\
1&0&0&0&0&0&1
\end{array}
\]
Figure~\ref{fig:hasse-utop-star} gives the corresponding Hasse-type diagram.

\begin{figure}[!htbp]
\centering
\includegraphics[width=0.30\textwidth]{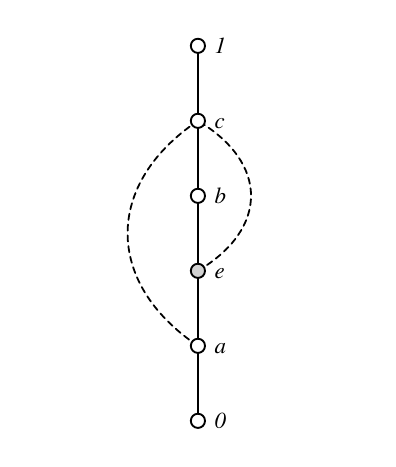}
\caption{Hasse-type diagram of the bounded trellis in
Example~\ref{ex:finite-utop-star}.}
\label{fig:hasse-utop-star}
\end{figure}

The only incomparable pairs, apart from symmetry, are $(a,c)$ and
$(e,c)$; both have meet $0$ and join $1$. A direct check of the matrix
shows that, for every comparable pair, the smaller element is its meet
and the larger element is its join. Thus every pair has a meet and a
join, and
$\mathbb{P}=(X,\unlhd,\wedge,\vee,0,1)$ is a bounded trellis. It is not a
lattice, because
\[
e\unlhd b\unlhd c
\qquad\text{but}\qquad
e\ntrianglelefteq c.
\]
The lower and upper relation sets of $e$ are
\[
\{x\in X:x\unlhd e\}=\{0,a,e\},
\qquad
\{y\in X:e\unlhd y\}=\{e,b,1\}.
\]
The matrix therefore shows that $e$ is middle-transitive. Since the
relation has no non-trivial directed cycle, $e\in K$. Moreover,
\[
I_e^1=I_e^3=\varnothing,\qquad I_e^2=\{c\},\qquad
A_e=\{0,a\},\qquad \widehat B_e=\{b,c,1\}.
\]

This example shows that an interior operator on $\widehat B_e$ need
not map $]e,1]$ into $]e,1]$. Define
$h:\widehat B_e\to \widehat B_e$ by
\[
h(b)=b,\qquad h(c)=c,\qquad h(1)=c.
\]
Since $b\unlhd c\unlhd1$, the map $h$ is order-preserving,
idempotent, and satisfies $h(x)\unlhd x$; hence it is an interior
operator on $\widehat B_e$. Nevertheless,
$
h(1)=c\notin]e,1]
$
because $c\parallel e$. Thus $h(]e,1])\subseteq]e,1]$ does not follow
from the interior-operator axioms on a trellis. Moreover, $h$ fails the
join identity since $c\vee1=1\ne c=c\vee h(1)$. This explains how the
join condition in Theorem~\ref{th:utop-star-construction} prevents such
an escape from the strict upper interval.

We now specify the maps in
Theorem~\ref{th:utop-star-construction} explicitly. Let $\iota$ be the identity map on
$\widehat B_e$, let
\[
U_L(x,y)=x\wedge y\qquad(x,y\in[0,e]=\{0,a,e\}),
\]
and let $Q(x,y)=1$ for all $x,y\in \widehat B_e$. Then $U_L$ is a uninorm on
$[0,e]$ with neutral element $e$, while $Q$ is increasing,
associative, and commutative. Here conditions \textup{(iii)}, \textup{(iv)},
\textup{(vi)--(ix)} of Theorem~\ref{thm:top-construction} follow from
$Q\equiv1$ and $\iota=\operatorname{id}$. Condition \textup{(v)} is immediate.
\rev{Condition \textup{(ii)} holds because $]e,1]=\{b,1\}$ and every element
of $A_e=\{0,a\}$ is strictly below both $b$ and $1$. For conditions
\textup{(i)} and \textup{(x)}, if $z\in\{b,1\}$, then
$U_L(x,y)\in\{0,a\}\lhd z$. If $z=c$, the antecedent in \textup{(i)} forces
$x=0$, whereas that in \textup{(x)} forces $y=0$; hence the required value
of $U_L$ is $0\lhd c$ in either case. Thus all ten conditions are
satisfied.} The two
additional conditions of Theorem~\ref{th:utop-star-construction} also
hold: $\iota(c)=c$ and
\[
c\vee b=c=c\vee \iota(b),\qquad
c\vee1=1=c\vee \iota(1).
\]
Formula~\eqref{eq:top-construction} therefore gives the following uninorm:
\[
\begin{array}{c|cccccc}
U&0&a&e&b&c&1\\ \hline
0&0&0&0&b&c&1\\
a&0&a&a&b&c&1\\
e&0&a&e&b&c&1\\
b&b&b&b&1&1&1\\
c&c&c&c&1&1&1\\
1&1&1&1&1&1&1
\end{array}
\]
For completeness, a direct finite check of this table gives
\[
U(x,U(y,z))=U(U(x,y),z)
\]
and
\[
x\unlhd y,\ r\unlhd s
\ \Rightarrow\ U(x,r)\unlhd U(y,s)
\qquad(x,y,r,s\in X).
\]
Furthermore, $U(c,0)=U(c,a)=c$, and for $x\in\{b,1\}$ one has
$U(x,0)=U(x,a)=x$. Hence conditions \textup{(A1)--(A3)} hold, and
$
U\in\widetilde{\mathcal{U}}_{\mathrm{top}}^{\star}.
$
The recovery formulas
\[
\iota(x)=U(0,x)\quad(x\in\widehat B_e),\qquad
U_L=U|_{\{0,a,e\}\times\{0,a,e\}},\qquad
Q=U|_{\{b,c,1\}\times\{b,c,1\}}
\]
also illustrate the uniqueness assertion in
Theorem~\ref{th:utop-star-representation}.
\end{example}

\subsection{Construction and representation of
\texorpdfstring{$\widetilde{\mathcal{U}}_{\mathrm{bot}}$}{Ubot}}


The results in this subsection are the dual counterparts of
those established for
$\widetilde{\mathcal{U}}_{\mathrm{top}}$. For completeness, we state
them explicitly but omit their proofs.
We denote the closure operator by $\kappa$ to avoid a conflict with
the element $g$ used in Part~II.

The following lemma is the counterpart of
Lemma~\ref{lem:top-block-closure}. It is stated explicitly for later
use in the representation of
$\widetilde{\mathcal{U}}_{\mathrm{bot}}$.

\begin{lemma}\label{lem:bottom-block-closure}
Let $e\in K\setminus\{0,1\}$. Then
the sets $\widehat A_e$, $\{e\}$ and $B_e$ form a partition of $X$.
 Moreover, $\widehat A_e$ is a lower set: if $d\in\widehat A_e$ and
$z\unlhd d$, then $z\in\widehat A_e$. Finally, if $V:X\times X\to X$ is an
increasing operation with neutral element $e$, then
$V(B_e\times B_e)\subseteq B_e$.
\end{lemma}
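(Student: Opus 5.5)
The plan is to dualize the proof of Lemma~\ref{lem:top-block-closure}, with every occurrence of $\lesssim$ reversed and the roles of $A_e$ and $B_e$ interchanged. The argument has three steps, carried out in this order.

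\textbf{Partition.} By Lemma~\ref{lem:reachability-partition}, $I_e^1\cap I_e^2=\varnothing$. The preliminaries give the pairwise disjoint decomposition $X=A_e\cup\{e\}\cup B_e\cup I_e^3$, and $\widehat A_e=A_e\cup I_e^3$. Hence $\widehat A_e$, $\{e\}$ and $B_e$ are pairwise disjoint and their union is $X$. By Lemma~\ref{lem:r-reachability}, membership can be read off from $\lesssim$: an element $x\neq e$ lies in $B_e$ iff $e\lesssim x$, and lies in $\widehat A_e$ iff $e\not\lesssim x$.

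\textbf{Lower set.} Let $d\in\widehat A_e$ and $z\unlhd d$, and suppose $z\in B_e\cup\{e\}$. Then $e\lesssim z\unlhd d$, so $e\lesssim d$. If $d\in I_e^3$, this contradicts the definition of $I_e^3$. If $d\in A_e$, then $d\lesssim e$ as well. Since $d\neq e$, the element $e$ then lies on a non-trivial cycle, contradicting $e\in K$. Therefore $z\in\widehat A_e$.

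\textbf{Closure of $B_e$ under $V$.} Let $x,y\in B_e$ and fix a directed path $e=y_0\unlhd y_1\unlhd\cdots\unlhd y_n=y$. Each $y_i$ satisfies $e\lesssim y_i$, so by the partition step no $y_i$ lies in $\widehat A_e$; all vertices lie in $B_e\cup\{e\}$. Monotonicity of $V$ along this path, as in the first part of Lemma~\ref{lem:r-reachability}, gives
\[
e\lesssim x=V(x,e)\lesssim V(x,y).
\]
If $V(x,y)=e$, then $e\lesssim x\lesssim e$ with $x\neq e$, which is again a non-trivial cycle through $e$ and is impossible. If $V(x,y)\in\widehat A_e$, then $e\lesssim V(x,y)$ with $V(x,y)\neq e$ contradicts the characterization from the partition step. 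By the partition, $V(x,y)\in B_e$.

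The only delicate point is the cycle argument: one must make sure that a path from $e$ back to $e$ through some $x\neq e$ really yields a non-trivial cycle containing $e$. This holds because the union of the two directed paths is a set in which every two elements are mutually reachable within the set, and it contains at least two elements. This is exactly the reasoning already used in Lemmas~\ref{lem:r-reachability} and~\ref{lem:top-block-closure}, so I will cite it rather than repeat it.
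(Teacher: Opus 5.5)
Your proposal is correct and is essentially the paper's intended argument: the paper omits this proof as the dual of Lemma~\ref{lem:top-block-closure}, and your three steps (partition, lower-set property via reachability and the no-cycle condition, closure of $B_e$ via monotonicity along a path from $e$ to $y$) are exactly that dualization. Your justification that $e\lesssim d\lesssim e$ with $d\neq e$ yields a non-trivial cycle is also sound.
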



\begin{theorem}[Construction of $\widetilde{\mathcal{U}}_{\mathrm{bot}}$]
\label{th:ubot-construction}
Let $\mathbb{P}=(X,\unlhd,\wedge,\vee,0,1)$ be a bounded trellis, and let
$e\in K\setminus\{0,1\}$. Consider a closure operator
$\kappa:\widehat A_e\to\widehat A_e$, a uninorm
$U_R:(B_e\cup\{e\})\times(B_e\cup\{e\})\to B_e\cup\{e\}$ with neutral element $e$, and an
increasing, associative, and commutative operation
$P:\widehat A_e\times\widehat A_e\to\widehat A_e$. Assume that the following conditions
hold:
\begin{enumerate}[itemsep=1pt, parsep=0pt, topsep=8pt]
\item[\textup{(i)}]
If $x,y\in B_e$ and $z\in\widehat A_e$ satisfy $z\unlhd x$, then
$\kappa(z)\lhd U_R(x,y)$.

\item[\textup{(ii)}]
If $x\in[0,e[$ and $y\in B_e$, then $\kappa(x)\lhd y$.

\item[\textup{(iii)}]
If $x,r\in\widehat A_e$ and $y,s\in B_e$ satisfy
$x\unlhd y$ and $r\unlhd s$, then
$P(x,r)\unlhd U_R(y,s)$.

\item[\textup{(iv)}]
If $x\in\widehat A_e$, $y\in B_e$, and $s\in[0,e[$ satisfy
$x\unlhd y$, then $P(x,s)\unlhd y$.

\item[\textup{(v)}]
If $x,y\in\widehat A_e$ satisfy $x\unlhd y$, then $x\unlhd\kappa(y)$.

\item[\textup{(vi)}]
If $x,y,s\in\widehat A_e$ satisfy $x\unlhd y$, then
$P(x,s)\unlhd\kappa(y)$.

\item[\textup{(vii)}]
For all $x,y\in[0,e[$, $P(x,y)\unlhd e$.

\item[\textup{(viii)}]
If $x,y\in\widehat A_e$ and $s\in[0,e[$ satisfy $x\unlhd y$, then
$P(x,s)\unlhd y$.

\item[\textup{(ix)}]
For all $x,y\in\widehat A_e$,
$\kappa(P(x,y))=P(\kappa(x),y)$.

\item[\textup{(x)}]
If $x\in]e,1]$, $y\in B_e$, and $z\in\widehat A_e$ satisfy
$z\unlhd y$, then $z\lhd U_R(x,y)$.
\end{enumerate}
The absence of an extra hypothesis $x\unlhd y$ in \textup{(ii)} is
essential: $(x,y)\preceq(e,y)$ for every $x\in[0,e[$ and
$y\in B_e$. Define $U:X\times X\to X$ by
\begin{equation}\label{eq:ubot-construction}
U(x,y)=
\begin{cases}
U_R(x,y),
   & (x,y)\in(B_e\cup\{e\})\times(B_e\cup\{e\}),\\
x,
   & (x,y)\in\widehat A_e\times\{e\},\\
y,
   & (x,y)\in\{e\}\times\widehat A_e,\\
\kappa(x),
   & (x,y)\in\widehat A_e\times B_e,\\
\kappa(y),
   & (x,y)\in B_e\times\widehat A_e,\\
P(x,y),
   & (x,y)\in\widehat A_e\times\widehat A_e.
\end{cases}
\end{equation}
Then $U$ is a uninorm on $\mathbb{P}$ with neutral element $e$.
Moreover, $U(x,y)=\kappa(x)\in\widehat A_e$ for all
$x\in\widehat A_e$ and $y\in B_e$, and hence
$U\in\widetilde{\mathcal{U}}_{\mathrm{bot}}$.
\end{theorem}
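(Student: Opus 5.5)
The plan is to dualize the proof of Theorem~\ref{thm:top-construction}, using Lemma~\ref{lem:bottom-block-closure} wherever Lemma~\ref{lem:top-block-closure} was used. Lemma~\ref{lem:bottom-block-closure} supplies the partition $X=\widehat A_e\cup\{e\}\cup B_e$, which makes formula~\eqref{eq:ubot-construction} well defined and shows that its six regions cover $X\times X$. Commutativity is then immediate from the symmetric definition together with the commutativity of $U_R$ and $P$. Neutrality of $e$ follows from the second, third and first lines, since $U_R$ has neutral element $e$. The closing assertion, $U(x,y)=\kappa(x)\in\widehat A_e$ for $x\in\widehat A_e$ and $y\in B_e$, is read off the fourth line. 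Taking $p_x=\kappa(x)$ then gives $(\circledcirc)$, so that $U\in\widetilde{\mathcal U}_{\mathrm{bot}}$ once $U$ is known to be a uninorm.

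\textbf{Associativity.} First I would record the analogue of~\eqref{eq:top-lower-block-closure}, namely $U_R(B_e\times B_e)\subseteq B_e$. For $x,y\in B_e$, take a directed path from $e$ to $y$ inside $B_e\cup\{e\}$ and apply monotonicity of $U_R$; this gives $e\lesssim x=U_R(x,e)\lesssim U_R(x,y)$, and the value $e$ is excluded because $e\in K$.

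Next I dispose of the easy cases. If $e\in\{x,y,z\}$, neutrality settles it. If all three elements lie in $B_e$, use associativity of $U_R$; if all three lie in $\widehat A_e$, use associativity of $P$.

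The mixed cases mirror cases 1.1--2.2.2 of the top proof, with $\iota$ replaced by $\kappa$ and $Q$ by $P$. Idempotence $\kappa^2=\kappa$ handles the configurations with two entries in $B_e$. Condition~(ix), $\kappa(P(x,y))=P(\kappa(x),y)$, combined with commutativity of $P$, handles the configurations with two entries in $\widehat A_e$. For example, for $x\in\widehat A_e$, $y\in B_e$ and $z\in\widehat A_e$, both sides reduce to $P(\kappa(x),z)=\kappa(P(x,z))$.

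\textbf{Monotonicity.} Fix $x\unlhd y$ and $r\unlhd s$, and split according to the regions of $y,s$; this dualizes cases 1--6, which were split by the regions of $x,r$. Lemma~\ref{lem:bottom-block-closure}, which says $\widehat A_e$ is a lower set, constrains $x,r$ whenever $y$ or $s$ lies in $\widehat A_e$. Each mixed comparison is then exactly one of the hypotheses:
\begin{itemize}
\item (i) gives $\kappa(z)\unlhd U_R(x,y)$;
\item (ii) gives $\kappa(x)\unlhd y$, using $(x,y)\preceq(e,y)$;
\item (iii) compares $P$ with $U_R$;
\item (iv) and (viii) compare $P(x,s)$ with $U(y,e)$-type values;
\item (v) and (vi) compare against $\kappa(y)$;
\item (vii) gives $P(x,y)\unlhd e=U(e,e)$;
\item (x) covers the case $z\unlhd y$ with the other argument pair going $e\unlhd x$.
\end{itemize}
The one genuinely trellis-specific step is the case $x\unlhd e\unlhd s$ with $x\in\widehat A_e$, $s\in B_e$. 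There I invoke middle-transitivity of $e$ (guaranteed since $e\in K$) to get $x\unlhd s$, exactly as in case 2.3. Within a single region, monotonicity follows from that of $U_R$, $P$ and $\kappa$.

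The main obstacle is bookkeeping in the monotonicity case analysis. Transitivity is unavailable, so every cross-region comparison must be matched to a listed condition rather than chained. In particular I must check that every pair of regions $(y,s)$ reachable from $(x,r)$ is covered. For this, lower-closedness of $\widehat A_e$ and the disjointness in Lemma~\ref{lem:bottom-block-closure} rule out the impossible configurations, such as $x\in B_e$ with $y\in\widehat A_e$.
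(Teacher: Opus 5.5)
Your proposal is correct and follows the paper's route: the paper omits this proof as the dual of Theorem~\ref{thm:top-construction}, and you carry out exactly that dualization. Monotonicity is split by the regions of the larger pair $(y,s)$, using that $\widehat A_e$ is a lower set, with each cross-region comparison matched to the appropriate condition among (i)--(x) and with middle-transitivity of $e\in K$ used for the case $r\unlhd e\unlhd y$; associativity uses the closure $U_R(B_e\times B_e)\subseteq B_e$, idempotence $\kappa^2=\kappa$, and condition (ix) with commutativity of $P$.
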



\begin{theorem}[Representation of $\widetilde{\mathcal{U}}_{\mathrm{bot}}$]
\label{th:ubot-representation}
Let $U$ be a uninorm on
$\mathbb{P}=(X,\unlhd,\wedge,\vee,0,1)$ with neutral element
$e\in K\setminus\{0,1\}$. Then the following statements are equivalent:
\begin{enumerate}[label=\textup{(\arabic*)}, itemsep=1pt, parsep=0pt, topsep=8pt]
\item $U\in\widetilde{\mathcal{U}}_{\mathrm{bot}}$.
\item There exist a closure operator
$\kappa:\widehat A_e\to\widehat A_e$, a uninorm
$U_R:(B_e\cup\{e\})\times(B_e\cup\{e\})\to B_e\cup\{e\}$ with neutral element $e$, and an
increasing, associative, and commutative operation
$P:\widehat A_e\times\widehat A_e\to\widehat A_e$ satisfying conditions \textup{(i)--(x)} of
Theorem~\ref{th:ubot-construction}, such that $U$ is given by
formula~\eqref{eq:ubot-construction}.
\end{enumerate}
Moreover, these components are uniquely determined by $U$:
\[
\kappa(x)=U(x,1)\quad(x\in\widehat A_e),\qquad
U_R=U|_{(B_e\cup\{e\})\times(B_e\cup\{e\})},\qquad
P=U|_{\widehat A_e\times\widehat A_e}.
\]
\end{theorem}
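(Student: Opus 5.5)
The plan is to dualize the proof of Theorem~\ref{thm:top-representation} step by step. Lemma~\ref{lem:bottom-block-closure} replaces Lemma~\ref{lem:top-block-closure} throughout, and the top element $1$ plays the role of $0$. The implication $(2\Rightarrow1)$ is immediate from Theorem~\ref{th:ubot-construction}. Uniqueness follows from formula~\eqref{eq:ubot-construction}: since $1\in B_e$, we have $U(x,1)=\kappa(x)$ for $x\in\widehat A_e$, and the other two components are restrictions of $U$.

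For $(1\Rightarrow2)$, let $U\in\widetilde{\mathcal{U}}_{\mathrm{bot}}$ and define
\[
\kappa(x)=U(x,1)\ (x\in\widehat A_e),\qquad U_R=U|_{(B_e\cup\{e\})^2},\qquad P=U|_{\widehat A_e^2}.
\]
By $(\circledcirc)$, $\kappa(x)=p_x\in\widehat A_e$. By Lemma~\ref{lem:bottom-block-closure}, $U(B_e\times B_e)\subseteq B_e$, so with neutrality $U_R$ is a well-defined uninorm with neutral element $e$.

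Next, check that $\kappa$ is a closure operator.
\begin{itemize}
\item It is order-preserving by monotonicity in the first argument.
\item It is extensive: $x=U(x,e)\unlhd U(x,1)$.
\item It is idempotent: $U(1,1)=1$, because $1=U(e,1)\unlhd U(1,1)$ and $1$ is greatest. Associativity then gives $\kappa(\kappa(x))=U(U(x,1),1)=U(x,U(1,1))=\kappa(x)$.
\end{itemize}
To see that $P$ maps into $\widehat A_e$, take $x,y\in\widehat A_e$. Monotonicity gives $U(x,y)\unlhd U(x,1)=\kappa(x)\in\widehat A_e$, and $\widehat A_e$ is a lower set by Lemma~\ref{lem:bottom-block-closure}. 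Hence $P$ inherits monotonicity, commutativity and associativity from $U$. The representation~\eqref{eq:ubot-construction} then holds, because $(\circledcirc)$ gives $U(x,y)=U(x,1)=\kappa(x)$ for $x\in\widehat A_e$ and $y\in B_e$, and the mixed block $B_e\times\widehat A_e$ follows by commutativity.

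It remains to verify (i)--(x) of Theorem~\ref{th:ubot-construction}. Each is a single monotonicity comparison, plus a strictness argument where $\lhd$ is required.
\begin{itemize}
\item (i): $\kappa(z)=U(z,y)\unlhd U(x,y)=U_R(x,y)$.
\item (ii): $\kappa(x)=U(x,y)\unlhd U(e,y)=y$, using $(x,y)\preceq(e,y)$.
\item (iii): $P(x,r)=U(x,r)\unlhd U(y,s)=U_R(y,s)$.
\item (iv) and (viii): $P(x,s)=U(x,s)\unlhd U(y,e)=y$.
\item (v): $x=U(x,e)\unlhd U(y,1)=\kappa(y)$.
\item (vi): $P(x,s)\unlhd U(y,1)=\kappa(y)$.
\item (vii): $P(x,y)\unlhd U(e,e)=e$.
\item (ix): associativity gives $U(U(x,y),1)=U(U(x,1),y)$.
\item (x): $z=U(z,e)\unlhd U(y,x)=U_R(x,y)$.
\end{itemize}
For the strict inequalities in (i), (ii) and (x), equality is impossible because one side lies in $\widehat A_e$ and the other in $B_e$, which are disjoint by Lemma~\ref{lem:bottom-block-closure}.

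The only delicate step is ensuring that the comparison pairs are actually $\preceq$-related and that the values fall in the right blocks so that strictness follows. This is handled entirely by the partition and lower-set statements of Lemma~\ref{lem:bottom-block-closure}, exactly as in the top case.
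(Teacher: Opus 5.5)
Your proposal is correct and is exactly the dualization of the proof of Theorem~\ref{thm:top-representation} that the paper intends; the paper omits this proof as the dual of the top case. It matches that proof throughout, including the idempotency argument via $U(1,1)=1$, the lower-set argument for the range of $P$, and the comparison pairs used for (i)--(x). One cosmetic point: in (ix) you also need commutativity, since $U(U(x,y),1)=U(x,U(1,y))=U(U(x,1),y)$.
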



\subsection{The subclass
\texorpdfstring{$\widetilde{\mathcal{U}}_{\mathrm{bot}}^{\star}$}{Ubot-star}}

We next specialize the preceding representation to
$\widetilde{\mathcal{U}}_{\mathrm{bot}}^{\star}$. In addition to the
conditions of Theorem~\ref{th:ubot-construction}, we impose the two
further requirements stated below. The second requirement implies
$\kappa([0,e[)\subseteq[0,e[$, and hence this inclusion need not be
assumed separately.

\begin{theorem}[Construction of $\widetilde{\mathcal{U}}_{\mathrm{bot}}^{\star}$]
\label{th:ubot-star-construction}
Let $\mathbb{P}=(X,\unlhd,\wedge,\vee,0,1)$ be a bounded trellis and
let $e\in K\setminus\{0,1\}$. Suppose that $\kappa$, $U_R$, and $P$
satisfy all the hypotheses and conditions \textup{(i)--(x)} of
Theorem~\ref{th:ubot-construction}. Assume, in addition, that
\begin{enumerate}[label=\textup{(D\arabic*)},leftmargin=*,itemsep=2pt]
\item $\kappa(x)=x$ for every $x\in I_e^1\cup I_e^3$;
\item $x\wedge y=x\wedge\kappa(y)$ for all
$x\in I_e^1\cup I_e^3$ and $y\in[0,e[$.
\end{enumerate}
Then $\kappa([0,e[)\subseteq[0,e[$, and the operation $U:X\times X\to X$ defined by
formula~\eqref{eq:ubot-construction} is a uninorm on $\mathbb{P}$ with
neutral element $e$, and $U\in\widetilde{\mathcal{U}}_{\mathrm{bot}}^{\star}$.
\end{theorem}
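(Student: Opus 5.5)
The plan is to dualize the proof of Theorem~\ref{th:utop-star-construction}. By Theorem~\ref{th:ubot-construction}, the operation $U$ given by formula~\eqref{eq:ubot-construction} is a uninorm with neutral element $e$, and $U\in\widetilde{\mathcal{U}}_{\mathrm{bot}}$. Moreover, $U(x,y)=\kappa(x)$ for all $x\in\widehat A_e$ and $y\in B_e$. It therefore remains to verify \textup{(B1)--(B3)} of Definition~\ref{def:top-bottom-classes}.

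\textbf{Condition (B1).} Let $x\in I_e^1\cup I_e^3\subseteq\widehat A_e$ and $y\in B_e$. Formula~\eqref{eq:ubot-construction} together with \textup{(D1)} gives $U(x,y)=\kappa(x)=x$.

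\textbf{The inclusion $\kappa([0,e[)\subseteq[0,e[$.} This is the key step. Let $y\in[0,e[$ and put $z=\kappa(y)$. Since $\kappa$ maps into $\widehat A_e=[0,e[\cup I_e^1\cup I_e^3$, the only case to exclude is $z\in I_e^1\cup I_e^3$, so assume this. The closure-operator axiom gives $y\unlhd\kappa(y)=z$. In a trellis this comparison is equivalent to $z\wedge y=y$. On the other hand, applying \textup{(D2)} with $x=z$ yields
\[
z\wedge y=z\wedge\kappa(y)=z\wedge z=z.
\]
Hence $y=z$. This is impossible, because $y\in[0,e[$ is comparable with $e$, whereas $z\in I_e$. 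Therefore $\kappa(y)\in[0,e[$.

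\textbf{Conditions (B2) and (B3).} Put $n_y=\kappa(y)\in[0,e[$ for $y\in[0,e[$. Formula~\eqref{eq:ubot-construction} gives $U(y,b)=n_y$ for every $b\in B_e$, which is \textup{(B2)}. Condition \textup{(D2)} then reads $x\wedge y=x\wedge n_y$ for all $x\in I_e^1\cup I_e^3$ and $y\in[0,e[$, which is exactly \textup{(B3)}. Consequently, $U\in\widetilde{\mathcal{U}}_{\mathrm{bot}}^{\star}$.

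The only non-routine step is ruling out $\kappa(y)\in I_e^1\cup I_e^3$. This is precisely where \textup{(D2)} replaces the transitivity that is missing on a proper trellis; the remaining steps are read off directly from formula~\eqref{eq:ubot-construction}.
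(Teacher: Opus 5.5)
Your proof is correct and is exactly the dual of the paper's proof of Theorem~\ref{th:utop-star-construction}; the paper omits a separate proof for the bottom case as its dual counterpart. The steps match one for one: Theorem~\ref{th:ubot-construction} gives membership in $\widetilde{\mathcal{U}}_{\mathrm{bot}}$, \textup{(D1)} gives \textup{(B1)}, applying \textup{(D2)} with $x=\kappa(y)$ gives $\kappa(y)\wedge y=\kappa(y)$ against $\kappa(y)\wedge y=y$ and so excludes $\kappa(y)\in I_e^1\cup I_e^3$, and then \textup{(B2)}, \textup{(B3)} follow with $n_y=\kappa(y)$.

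Only your closing comment is off. The argument uses no transitivity, and \textup{(D2)} is needed for the inclusion even on bounded lattices, so it does not ``replace'' transitivity. For example, take the Boolean lattice $\{0,c,e,1\}$ with $c\parallel e$, the closure operator $\kappa(0)=\kappa(c)=c$ on $\widehat A_e=\{0,c\}$, and $U_R$, $P$ the maximum operations. This satisfies \textup{(i)--(x)} and \textup{(D1)}, yet $\kappa(0)\notin[0,e[$.
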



\begin{theorem}[Representation of
$\widetilde{\mathcal{U}}_{\mathrm{bot}}^{\star}$]
\label{th:ubot-star-representation}
Let $U$ be a uninorm on
$\mathbb{P}=(X,\unlhd,\wedge,\vee,0,1)$ with neutral element
$e\in K\setminus\{0,1\}$. Then the following statements are equivalent:
\begin{enumerate}[label=\textup{(\arabic*)},leftmargin=*,itemsep=2pt]
\item $U\in\widetilde{\mathcal{U}}_{\mathrm{bot}}^{\star}$.
\item $U$ can be constructed by the method of
Theorem~\ref{th:ubot-star-construction}.
\end{enumerate}
Moreover, its components are uniquely determined by
\[
\kappa(x)=U(x,1)\quad(x\in\widehat A_e),\qquad
U_R=U|_{(B_e\cup\{e\})\times(B_e\cup\{e\})},\qquad
P=U|_{\widehat A_e\times\widehat A_e}.
\]
\end{theorem}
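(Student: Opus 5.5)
The plan is to dualize the proof of Theorem~\ref{th:utop-star-representation}, using Theorem~\ref{th:ubot-representation} and Theorem~\ref{th:ubot-star-construction} in place of their top counterparts.

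\textbf{Direction $(1\Rightarrow2)$.} Let $U\in\widetilde{\mathcal{U}}_{\mathrm{bot}}^{\star}$.

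Since $\widetilde{\mathcal{U}}_{\mathrm{bot}}^{\star}\subseteq\widetilde{\mathcal{U}}_{\mathrm{bot}}$, Theorem~\ref{th:ubot-representation} supplies three components:
\begin{itemize}
\item the closure operator $\kappa(x)=U(x,1)$ on $\widehat A_e$;
\item the uninorm $U_R=U|_{(B_e\cup\{e\})\times(B_e\cup\{e\})}$;
\item the operation $P=U|_{\widehat A_e\times\widehat A_e}$.
\end{itemize}
These satisfy conditions (i)--(x) of Theorem~\ref{th:ubot-construction}, and $U$ is given by formula~\eqref{eq:ubot-construction}. It then remains to check (D1) and (D2).

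For (D1), take $x\in I_e^1\cup I_e^3$. Since $1\in]e,1]\subseteq B_e$, condition (B1) gives $\kappa(x)=U(x,1)=x$.

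For (D2), take $y\in[0,e[$. Condition (B2) provides $n_y\in[0,e[$ with $U(y,b)=n_y$ for every $b\in B_e$; in particular $\kappa(y)=U(y,1)=n_y$. Condition (B3) then yields $x\wedge y=x\wedge n_y=x\wedge\kappa(y)$ for all $x\in I_e^1\cup I_e^3$. Hence all hypotheses of Theorem~\ref{th:ubot-star-construction} hold, and $U$ is obtained by that construction.

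\textbf{Direction $(2\Rightarrow1)$.} This is exactly the conclusion of Theorem~\ref{th:ubot-star-construction}.

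That theorem's proof is omitted as the dual of Theorem~\ref{th:utop-star-construction}, and its one nontrivial step is also the only real obstacle here: showing $\kappa([0,e[)\subseteq[0,e[$, so that we may take $n_y=\kappa(y)$ in (B2). I would argue by contradiction. Let $y\in[0,e[$ and suppose $z=\kappa(y)\notin[0,e[$. Since $\kappa(y)\in\widehat A_e$, this forces $z\in I_e^1\cup I_e^3$.
\begin{itemize}
\item Because $y\unlhd\kappa(y)=z$, we have $z\wedge y=y$.
\item Condition (D2) with $x=z$ gives $z\wedge y=z\wedge\kappa(y)=z$.
\end{itemize}
Thus $y=z$, which is impossible because $y\unlhd e$ while $z\parallel e$.

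With this inclusion in hand, (B1)--(B3) follow directly:
\begin{itemize}
\item (B1) follows from (D1) and the clause $U(x,y)=\kappa(x)$ on $\widehat A_e\times B_e$ in formula~\eqref{eq:ubot-construction};
\item (B2) holds with $n_y=\kappa(y)$;
\item (B3) is a restatement of (D2).
\end{itemize}

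\textbf{Uniqueness.} Formula~\eqref{eq:ubot-construction} recovers $\kappa(x)=U(x,1)$ (using $1\in B_e$), $U_R$ and $P$ as the stated restrictions of $U$. Therefore no two distinct triples of components can produce the same uninorm.
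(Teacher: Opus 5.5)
Your proposal is correct and follows the paper's approach: it is the dual of the paper's proof of Theorem~\ref{th:utop-star-representation}, and the paper omits the bottom-case proofs as duals. You obtain (D1)--(D2) from (B1)--(B3) via $\kappa(x)=U(x,1)$ with $1\in B_e$, cite Theorem~\ref{th:ubot-star-construction} for the converse, and read uniqueness off formula~\eqref{eq:ubot-construction}. Your additional argument that (D2) forces $\kappa([0,e[)\subseteq[0,e[$ correctly dualizes the corresponding step in the proof of Theorem~\ref{th:utop-star-construction}.
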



\section{Part II: \texorpdfstring{Representations}{Representations} of the classes
\texorpdfstring{$\widetilde{\mathcal{U}}_{\mathrm{max}}^{r}$ and $\widetilde{\mathcal{U}}_{\mathrm{min}}^{r}$}
{Umax-r and Umin-r}}
\label{sec:r-structural-properties}

In this part, we investigate the classes
$\widetilde{\mathcal{U}}_{\mathrm{max}}^{r}$ and
$\widetilde{\mathcal{U}}_{\mathrm{min}}^{r}$. On each prescribed mixed
region, their value is one of the two arguments rather than the image of a
unary operator. Our purpose is to obtain an exact piecewise description with
unique recovery of its components.
We first determine the regions on which the operation is forced.
For $U\in\widetilde{\mathcal{U}}_{\mathrm{max}}^{r}$, we identify the
pairs in $I_e^3\times I_e^3$ whose values lie in $A_e$ or are equal to
$e$; the remaining unprescribed values on
$\widehat B_e\times\widehat B_e$ are described by a partial operation.
Dually, for
$U\in\widetilde{\mathcal{U}}_{\mathrm{min}}^{r}$, we identify the
pairs in $I_e^3\times I_e^3$ whose values lie in $B_e$ or are equal to
$e$; the remaining unprescribed values on
$\widehat A_e\times\widehat A_e$ are described by a partial operation.
We then state the conditions under which these pieces assemble into a
uninorm. These conditions organize the verification of monotonicity and
associativity according to the different cases in the piecewise definition;
they are not intended to weaken the corresponding axioms for the resulting
operation.


%
%

\subsection{Definitions and structural setting}

We first adapt the classes
 introduced for bounded
pseudo-ordered sets in~\cite{KL26} to the common regions defined in
Section~\ref{sec:preliminaries}.

\begin{definition}\label{def:umaxr-uminr-trellis}
Let $\mathbb{P}=(X,\unlhd,\wedge,\vee,0,1)$ be a bounded trellis and
let $e\in K\setminus\{0,1\}$.
\begin{enumerate}[label=\textup{(\roman*)}, itemsep=2pt,
                  parsep=0pt, topsep=4pt]
  \item We define $\widetilde{\mathcal{U}}_{\mathrm{max}}^{r}$ as the class of
  all uninorms $U$ with neutral element $e$ such that
  \[
  U(x,y)=x\qquad ((x,y)\in A_e\times\widehat B_e).
  \]

  \item We define $\widetilde{\mathcal{U}}_{\mathrm{min}}^{r}$ as the class of
  all uninorms $U$ with neutral element $e$ such that
  \[
  U(x,y)=x\qquad ((x,y)\in B_e\times\widehat A_e).
  \]
\end{enumerate}
\end{definition}

By commutativity, the first identity is equivalently $U(x,y)=y$ on
$\widehat B_e\times A_e$, while the second is equivalently $U(x,y)=y$ on
$\widehat A_e\times B_e$. If $\unlhd$ is transitive, then
$I_e^1=I_e^2=\varnothing$ and $I_e^3=I_e$; Definition~\ref{def:umaxr-uminr-trellis}
therefore gives the corresponding classes
$\mathcal{U}_{\mathrm{max}}^{r}$ and
$\mathcal{U}_{\mathrm{min}}^{r}$ in~\cite{HP21}.

The distinction among $I_e^1$, $I_e^2$, and $I_e^3$ is essential.
On a proper trellis, the existence of a finite sequence
$x=x_0\unlhd x_1\unlhd\cdots\unlhd x_n=y$ does not necessarily imply
$x\unlhd y$. Therefore, the representations below treat $I_e^3$,
rather than the whole set $I_e$, separately.

Throughout Part~II, a subset $D$ of $I_e^3\times I_e^3$ is called a lower set if
\[
 \boldsymbol{x},\boldsymbol{y}\in I_e^3\times I_e^3,\quad
 \boldsymbol{x}\in D,\quad
 \boldsymbol{y}\preceq\boldsymbol{x}
 \quad\Rightarrow\quad \boldsymbol{y}\in D.
\]
Likewise, a subset $D$ is an upper set if
\[
 \boldsymbol{x},\boldsymbol{y}\in I_e^3\times I_e^3,
 \quad \boldsymbol{x}\in D,
 \quad \boldsymbol{x}\preceq\boldsymbol{y}
 \quad\Rightarrow\quad \boldsymbol{y}\in D.
\]
For any $D\subseteq I_e^3\times I_e^3$, put
\[
 \operatorname{supp}(D)=
 \{x\in I_e^3\mid (x,y)\in D\text{ for some }y\in I_e^3\}.
\]

 A symmetric set $E\subseteq I_e^3\times I_e^3$ is said to satisfy the
\emph{unique-partner condition} if
\[
 (x,y),(x,z)\in E\quad\Rightarrow\quad y=z
 \qquad(x,y,z\in I_e^3).
\]
Equivalently, each $x\in\operatorname{supp}(E)$ occurs with a unique
$y\in I_e^3$ such that $(x,y)\in E$.

\subsection{Basic properties and existence conditions}

We first establish several properties and existence conditions that will
be used in the subsequent representations.

\begin{proposition}\label{prop:r-cone-closure}
Let $U$ be a uninorm on $\mathbb{P}$ with neutral element $e$. Then
\[
 U(A_e\times A_e)\subseteq A_e,
 \qquad
 U(B_e\times B_e)\subseteq B_e.
\]
More precisely,
\[
 U(x,y)\lesssim x\ \text{and}\ U(x,y)\lesssim y
 \quad (x,y\in A_e),
 \qquad
 x\lesssim U(x,y)\ \text{and}\ y\lesssim U(x,y)
 \quad (x,y\in B_e).
\]
Furthermore, $U|_{[0,e]\times[0,e]}$ is a t-norm on $[0,e]$ and
$U|_{[e,1]\times[e,1]}$ is a t-conorm on $[e,1]$. In particular,
\[
 U(x,y)\unlhd x\wedge y \quad(x,y\in[0,e]),
 \qquad
 x\vee y\unlhd U(x,y) \quad(x,y\in[e,1]).
\]
\end{proposition}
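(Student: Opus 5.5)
The closure $U(A_e\times A_e)\subseteq A_e$ is exactly the last assertion of Lemma~\ref{lem:top-block-closure} applied with $V=U$, and $U(B_e\times B_e)\subseteq B_e$ is the last assertion of Lemma~\ref{lem:bottom-block-closure}. The finer reachability statements can be read off from the proof of Lemma~\ref{lem:top-block-closure}. For $x,y\in A_e$, Lemma~\ref{lem:r-reachability} gives $y\lesssim e$. The first part of the same lemma (increasing $H=U$, with $x\lesssim x$ and $y\lesssim e$) gives $U(x,y)\lesssim U(x,e)=x$. By commutativity, $U(x,y)=U(y,x)\lesssim U(y,e)=y$. Dually, for $x,y\in B_e$ we have $e\lesssim y$, hence $x=U(x,e)\lesssim U(x,y)$, and symmetrically $y\lesssim U(x,y)$. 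We need no case distinction on the intermediate vertices of the paths here, because Lemma~\ref{lem:r-reachability} is stated for arbitrary increasing $H$ on $X\times X$.

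For the t-norm claim, we first show that $[0,e]$ is closed under $U$. If $x,y\in[0,e]$, then $(x,y)\preceq(x,e)$ gives $U(x,y)\unlhd x$, and $(x,y)\preceq(e,y)$ gives $U(x,y)\unlhd y$. Since $y\unlhd e$ and $e$ is middle-transitive only in the pattern $u\unlhd e\unlhd v$, we cannot conclude $U(x,y)\unlhd e$ directly by transitivity. Instead we use joint monotonicity once more: $(x,y)\preceq(e,e)$ gives $U(x,y)\unlhd U(e,e)=e$. Thus $U(x,y)\in[0,e]$. The restriction is then commutative, associative and jointly increasing (all inherited), with neutral element $e=1_{[0,e]}$, so it is a t-norm on the bounded psoset $([0,e],\unlhd,0,e)$. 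The inequalities $U(x,y)\unlhd x$ and $U(x,y)\unlhd y$ show that $U(x,y)$ is a lower bound of $\{x,y\}$. Since $x\wedge y$ exists in the trellis and is the greatest lower bound, $U(x,y)\unlhd x\wedge y$. Here one must make sure that "infimum" in a trellis means the greatest element of the set of common lower bounds, which is how it is defined via the poset-style notions recalled in the preliminaries. The t-conorm statement on $[e,1]$ is the order dual: $(e,e)\preceq(x,y)$ gives $e\unlhd U(x,y)$, and $x,y\unlhd U(x,y)$ gives $x\vee y\unlhd U(x,y)$.

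The only point requiring care is the one just noted: membership of $U(x,y)$ in $[0,e]$ must come from the joint comparison with $(e,e)$ rather than from chaining $U(x,y)\unlhd x\unlhd e$, which would need transitivity that a proper trellis lacks. Everything else is a direct application of Lemmas~\ref{lem:r-reachability}, \ref{lem:top-block-closure} and \ref{lem:bottom-block-closure}.
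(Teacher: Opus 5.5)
Your proof is correct and follows essentially the same route as the paper. The reachability relations come from Lemma~\ref{lem:r-reachability} together with neutrality and commutativity, and the t-norm/t-conorm parts use exactly the comparisons $(x,y)\preceq(e,e)$ and $(x,y)\preceq(x,e)$ and their duals. The only difference is that you cite Lemmas~\ref{lem:top-block-closure} and~\ref{lem:bottom-block-closure} for the two inclusions. The paper instead derives them directly from $U(x,y)\lesssim x\lesssim e$ (resp.\ $e\lesssim x\lesssim U(x,y)$), excluding $U(x,y)=e$ by the no-non-trivial-cycle condition; this avoids relying on the dual lemma, whose proof the paper omits.
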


\begin{proof}
For $x,y\in A_e$, Lemma~\ref{lem:r-reachability} and neutrality give
\(U(x,y)\lesssim U(x,e)=x\) and \(U(x,y)\lesssim U(e,y)=y\).
Consequently, $U(x,y)\lesssim e$. If $U(x,y)=e$, then
$e\lesssim x\lesssim e$, which produces a non-trivial cycle containing $e$.
Hence $U(x,y)\in A_e$.

For $x,y\in B_e$, Lemma~\ref{lem:r-reachability} and neutrality give
\(x=U(x,e)\lesssim U(x,y)\) and \(y=U(e,y)\lesssim U(x,y)\).
Consequently, $e\lesssim U(x,y)$. If $U(x,y)=e$, then
$e\lesssim x\lesssim e$, again producing a non-trivial cycle containing
$e$. Hence $U(x,y)\in B_e$.

If $x,y\in[0,e]$, then $(x,y)\preceq(e,e)$, and monotonicity yields
$U(x,y)\unlhd U(e,e)=e$; hence $[0,e]$ is closed under $U$. Moreover,
$U(x,y)\unlhd U(x,e)=x$ and, symmetrically, $U(x,y)\unlhd y$. Thus the
restriction has neutral element $e$ and its value is below the meet of its
arguments. It inherits commutativity, associativity, and monotonicity from
$U$, and hence is a t-norm on $[0,e]$.

If $x,y\in[e,1]$, monotonicity gives
$e=U(e,e)\unlhd U(x,y)$, so $[e,1]$ is closed under $U$. Moreover,
$x=U(x,e)\unlhd U(x,y)$ and $y=U(e,y)\unlhd U(x,y)$; hence
$x\vee y\unlhd U(x,y)$. The restriction inherits commutativity,
associativity, and monotonicity from $U$ and has neutral element $e$.
\end{proof}

The following proposition identifies the regions determined by the
condition $U\in\widetilde{\mathcal{U}}_{\mathrm{max}}^{r}$.

\begin{proposition}\label{prop:r-max-blocks}
Let $U\in\widetilde{\mathcal{U}}_{\mathrm{max}}^{r}$. Then the following statements hold.
\begin{enumerate}[label=\textup{(\roman*)}, itemsep=2pt]
 \item $U(a,x)\in A_e$ for every $a\in A_e$ and $x\in X$, and
       $U(a,x)=a$ whenever $a\in A_e$ and
       $x\in\{e\}\cup\widehat B_e$.
 \item The element $0$ is an annihilator of $U$, that is,
       $U(0,x)=0$ for all $x\in X$.
 \item $U(B_e\times B_e)\subseteq B_e$ and
       $U(B_e\times I_e^3)\subseteq\widehat B_e$.
       Consequently, if $x,y\in\widehat B_e$ and
       $U(x,y)\in A_e\cup\{e\}$, then $x,y\in I_e^3$.
 \item For every $a\in A_e$ and $d\in\widehat B_e$, we have
       $a\lesssim d$ and $d\not\lesssim a$.
       If $a\in[0,e[$, then the relation $a\lhd d$ holds.
\end{enumerate}
\end{proposition}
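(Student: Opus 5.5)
I will prove the four items in order, since each uses the earlier ones. Throughout I use the partition $X=A_e\cup\{e\}\cup B_e\cup I_e^3$, Lemma~\ref{lem:r-reachability} (increasing maps preserve $\lesssim$), Lemma~\ref{lem:top-block-closure} ($\widehat B_e$ is an upper set, and $A_e,\{e\},\widehat B_e$ partition $X$), and Proposition~\ref{prop:r-cone-closure}.

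For (i), the identity $U(a,x)=a$ for $x\in\{e\}\cup\widehat B_e$ is neutrality together with the defining property of $\widetilde{\mathcal U}_{\mathrm{max}}^r$. The remaining case $x\in A_e$ is covered by Proposition~\ref{prop:r-cone-closure}, so $U(a,x)\in A_e$ in all cases.

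For (ii), note that $0\in A_e$, because $0\unlhd e$ and $0\neq e$. By (i), $U(0,x)=0$ for $x\notin A_e$. For $x\in A_e$, monotonicity with $0\unlhd x$ gives $U(0,0)\unlhd U(0,x)$. Moreover $(0,x)\preceq(0,e)$ gives $U(0,x)\unlhd U(0,e)=0$. Since $0$ is least, $U(0,x)\unlhd 0$ forces $U(0,x)=0$ by antisymmetry.

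For (iii), the inclusion $U(B_e\times B_e)\subseteq B_e$ is Proposition~\ref{prop:r-cone-closure}. For $b\in B_e$ and $t\in I_e^3$, the pair $(e,t)\preceq(b,t)$ is not directly available, since $e\unlhd b$ may fail. So I use reachability instead: $e\lesssim b$ and Lemma~\ref{lem:r-reachability} give $t=U(e,t)\lesssim U(b,t)$. I then rule out the alternatives.
\begin{itemize}
\item If $U(b,t)\in A_e\cup\{e\}$, then $t\lesssim U(b,t)\lesssim e$, contradicting $t\in I_e^3$.
\end{itemize}
Hence $U(b,t)\in\widehat B_e$. For the consequence, let $x,y\in\widehat B_e$ with $U(x,y)\in A_e\cup\{e\}$. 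If $x\in B_e$, then $y$ lies in $B_e$ or in $I_e^3$, and either way the inclusions just proved put $U(x,y)$ in $\widehat B_e$. That contradicts the partition, so $x\notin B_e$, and symmetrically $y\notin B_e$; thus both lie in $I_e^3$.

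For (iv), let $a\in A_e$ and $d\in\widehat B_e$. The key is to apply Lemma~\ref{lem:r-reachability} to the path from $0$ to $d$ and use (i) and (ii). Since $0\unlhd d$, we get $U(a,0)\lesssim U(a,d)$, that is, $0\lesssim a$, which is uninformative. I need a different path to reach $d$ from $a$. I will use $(a,e)\preceq(a,1)$, which relies only on $e\unlhd 1$:
\begin{itemize}
\item $a=U(a,e)\unlhd U(a,1)=a$, again trivial.
\end{itemize}
So I switch to the second coordinate. From $a\lesssim e$ and $e\unlhd 1$ we get $(a,d)\lesssim(e,d)$ componentwise, hence $U(a,d)\lesssim U(e,d)$, that is, $a\lesssim d$. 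Now suppose $d\lesssim a$. Then $d\lesssim a\lesssim e$. For $d\in B_e$ this puts $e$ in a non-trivial cycle, and for $d\in I_e^3$ it contradicts the definition of $I_e^3$. Finally, for $a\in[0,e[$, the comparison $(a,e)\preceq(e,d)$ fails in general, so I use $(a,d)\preceq(e,d)$ directly, which holds because $a\unlhd e$ and $d\unlhd d$. Monotonicity gives $a=U(a,d)\unlhd U(e,d)=d$. Equality is impossible because $A_e\cap\widehat B_e=\varnothing$, so $a\lhd d$.

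I expect the main obstacle to be this last part of (iv): choosing pair comparisons that are genuine $\preceq$-relations rather than mere reachability, since on a proper trellis only the direct comparison $a\unlhd e$ gives $\lhd$ and not just $\lesssim$.
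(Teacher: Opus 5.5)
\textbf{The gap is in item (ii).} For $x\in A_e$ you invoke $(0,x)\preceq(0,e)$. This comparison requires $x\unlhd e$, which holds only for $x\in[0,e[$. For $x\in I_e^1$ one has $x\parallel e$ by definition, so the comparison is false and your argument does not cover this case. This is exactly the non-transitive situation you warn about at the end of your proposal, and it genuinely occurs: in Example~\ref{ex:proper-trellis-r-classes}, $a\in I_e^1$ and $a\ntrianglelefteq e$. The preliminary inequality $U(0,0)\unlhd U(0,x)$ is true but plays no role.

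\textbf{The repair} is short and is what the paper does. Since $x\lesssim e$, Lemma~\ref{lem:r-reachability} gives $U(0,x)\lesssim U(0,e)=0$; equivalently, Proposition~\ref{prop:r-cone-closure} gives $U(0,x)\lesssim 0$ directly. Then observe that any path $y_0\unlhd y_1\unlhd\cdots\unlhd y_n=0$ consists only of $0$. Indeed, $y_{n-1}\unlhd 0$ and $0\unlhd y_{n-1}$ give $y_{n-1}=0$ by antisymmetry, and one proceeds backwards along the path. Hence $U(0,x)=0$.

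\textbf{The other items are correct.} Items (i), (iii), and (iv) follow essentially the same route as the paper. You use reachability via Lemma~\ref{lem:r-reachability} for the $\lesssim$-statements, and the direct comparison $(a,d)\preceq(e,d)$ only when $a\in[0,e[$.

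\textbf{Minor points for the write-up.} In (iv), the appeal to $e\unlhd 1$ is irrelevant; what you actually use is $a\lesssim e$ together with the trivial path $d\lesssim d$. The exploratory false starts in (iv) should also be removed.
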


\begin{proof}
Part~(i) follows from Proposition~\ref{prop:r-cone-closure}, neutrality, and
Definition~\ref{def:umaxr-uminr-trellis}. To prove (ii), it remains only to
consider $x\in A_e$. Since $x\lesssim e$, Lemma~\ref{lem:r-reachability}
gives $U(0,x)\lesssim U(0,e)=0$. In a bounded psoset, a path ending at the
smallest element $0$ can contain only $0$; hence $U(0,x)=0$.

The first inclusion in \textup{(iii)} follows from
Proposition~\ref{prop:r-cone-closure}. To prove the second inclusion,
let $b\in B_e$ and $c\in I_e^3$. Then
$
c=U(e,c)\lesssim U(b,c).
$
If $U(b,c)\in A_e\cup\{e\}$, then $U(b,c)\lesssim e$, and hence
$c\lesssim e$, contradicting $c\in I_e^3$. Therefore,
$
U(b,c)\notin A_e\cup\{e\},
$
so $U(b,c)\in\widehat B_e$.
This proves the second inclusion and the concluding assertion in
part~\textup{(iii)}.


Finally, for $a\in A_e$ and $d\in\widehat B_e$, monotonicity along a path
from $a$ to $e$ yields $a=U(a,d)\lesssim U(e,d)=d$.
A directed path from $d$ to $a$ would imply either a cycle containing $e$ or
$d\lesssim e$, according as $d\in B_e$ or $d\in I_e^3$. Both are impossible.
If $a\unlhd e$, the displayed path argument consists of one direct
inequality and gives $a\unlhd d$; the inequality is strict because $A_e$ and $\widehat B_e$
are disjoint.
\end{proof}

The corresponding nonemptiness criteria are already known for bounded
psosets. We recall them here because they provide the initial
constraints for the constructions developed below.

\begin{proposition}[{\cite[Remarks 7 and 8]{KL26}}]
\label{prop:r-nonempty}
The following equivalences hold:
\begin{align*}
 \widetilde{\mathcal{U}}_{\mathrm{max}}^{r}\ne\varnothing
 &\quad\Leftrightarrow\quad
 x\lhd y
 \quad\text{for every }(x,y)\in]0,e[\times(I_e^2\cup I_e^3),\\
 \widetilde{\mathcal{U}}_{\mathrm{min}}^{r}\ne\varnothing
 &\quad\Leftrightarrow\quad
 x\lhd y
 \quad\text{for every }(x,y)\in(I_e^1\cup I_e^3)\times]e,1[.
\end{align*}
\end{proposition}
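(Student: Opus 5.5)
We treat only the first equivalence; the second follows by the order-duality that exchanges $\unlhd$ with its converse, $0$ with $1$, $A_e$ with $B_e$, and $I_e^1$ with $I_e^2$.

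\textbf{Necessity.} Take $U\in\widetilde{\mathcal{U}}_{\mathrm{max}}^{r}$, and let $x\in]0,e[$ and $y\in I_e^2\cup I_e^3$. Then $y\in\widehat B_e$, so Proposition~\ref{prop:r-max-blocks}(iv) gives $x\lhd y$ at once, since $x\in[0,e[$. Concretely, $(x,y)\preceq(e,y)$, so monotonicity yields $x=U(x,y)\unlhd U(e,y)=y$. Equality is impossible because $A_e\cap\widehat B_e=\varnothing$.

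\textbf{Sufficiency.} Assume $x\lhd y$ for every $(x,y)\in]0,e[\times(I_e^2\cup I_e^3)$. We exhibit an explicit member of the class:
\[
U(x,y)=\begin{cases}
x\wedge y\ \text{computed as the appropriate argument}, & \text{see below},\\
x, & x\in A_e,\ y\in\{e\}\cup\widehat B_e,\\
y, & y\in A_e,\ x\in\{e\}\cup\widehat B_e,\\
y, & x=e,\\
x, & y=e,\\
1, & x,y\in\widehat B_e,\\
0, & x,y\in A_e.
\end{cases}
\]
The simplest candidate is $U(x,y)=0$ on $A_e\times A_e$ and $U(x,y)=1$ on $\widehat B_e\times\widehat B_e$, with neutrality on the $e$-row and $e$-column and the prescribed values on the mixed regions. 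Commutativity and neutrality hold by construction.

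Associativity we check by cases on how many of the three arguments lie in $A_e$:
\begin{itemize}
\item If some argument lies in $A_e$ and another lies in $A_e$, the result is $0$.
\item If exactly one argument $a$ lies in $A_e$, the others lying in $\{e\}\cup\widehat B_e$, both bracketings return $a$, provided the $\widehat B_e$-part is absorbed. This works because $U(a,1)=a$ and $U(a,d)=a$.
\item If no argument lies in $A_e$, we are inside $\{e\}\cup\widehat B_e$, where $U$ is $\max$-like with value $1$. That part is associative, since $\widehat B_e$ is closed and $1\in\widehat B_e$.
\end{itemize}
One subtlety must be checked: $U(0,a)=0$ versus $U(a,a')=0$ when $a\ne 0$ breaks neutrality-free consistency only through $e$, which is handled separately.

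\textbf{Monotonicity is the main obstacle.} We must show $U(x,r)\unlhd U(y,s)$ whenever $x\unlhd y$ and $r\unlhd s$.
\begin{itemize}
\item If $U(x,r)=0$ or $U(y,s)=1$, the inequality is trivial.
\item The remaining cases are $U(x,r)=a$ with $a\in A_e\cup\{e\}\cup\widehat B_e$ an argument, compared against $U(y,s)$ being an argument or a constant.
\item The critical case is $x=a\in A_e$, $r\in\{e\}\cup\widehat B_e$, and $y,s$ with $U(y,s)=y$ or $s$.
\item In particular, suppose $x=a\in]0,e[$, $r=e$, $y=e$, and $s=d\in\widehat B_e$. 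We then need $a\unlhd d$. For $d\in]e,1]$ this is middle-transitivity of $e$; for $d\in I_e^2\cup I_e^3$ it is exactly the hypothesis. This is why the hypothesis is needed.
\item For $a\in I_e^1$, the value $U(e,d)=d$ only arises with $x\unlhd e$, which excludes $x\in I_e^1$. Hence only $a\in[0,e[$ occurs there, and $a=0$ is trivial.
\item Comparisons $a\unlhd y$ with $a\in A_e$ and $y\in\widehat B_e$ are given directly by the assumptions $x\unlhd y$.
\end{itemize}
Lemma~\ref{lem:top-block-closure}, which states that $\widehat B_e$ is an upper set, keeps the case list finite: $x\in\widehat B_e$ forces $y\in\widehat B_e$.

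Since the cited source \cite{KL26} proves these remarks on bounded psosets, we can alternatively invoke it directly, because a bounded trellis is a bounded psoset and the class definitions coincide under the notation $A_e=[0,e[_{\lesssim}$.
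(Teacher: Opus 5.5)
Your argument is correct in substance, but it takes a different route from the paper. The paper gives no direct proof. It notes that a bounded trellis is a bounded psoset, and that by commutativity $\widetilde{\mathcal{U}}_{\mathrm{max}}^{r}$ and $\widetilde{\mathcal{U}}_{\mathrm{min}}^{r}$ coincide with the classes $\mathcal{U}_{\mathrm{max}r}^{\mathrm{ps}}$ and $\mathcal{U}_{\mathrm{min}r}^{\mathrm{ps}}$ of \cite{KL26}. It then invokes Remarks~8(i) and~7(i) there. That is only your closing ``alternatively'' sentence, where you should add that commutativity is what matches the one-sided defining identities.

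Your main argument is self-contained. Necessity is the comparison $(x,y)\preceq(e,y)$ already used in Proposition~\ref{prop:r-max-blocks}(iv). Sufficiency is an explicit witness: $0$ on $A_e\times A_e$, $1$ on $\widehat B_e\times\widehat B_e$, $e$ neutral, and $U(a,h)=U(h,a)=a$ for $a\in A_e$, $h\in\widehat B_e$. This is the $D=E=\varnothing$, $R\equiv 1$ instance of Corollary~\ref{cor:r-max-regular-core-construction}. The witness is indeed an associative uninorm. It is increasing precisely because $a\unlhd d$ for all $a\in[0,e[$ and $d\in\widehat B_e$: middle-transitivity gives this for $d\in]e,1]$, and the hypothesis gives it for $d\in I_e^2\cup I_e^3$. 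Your route shows exactly where the criterion comes from; the paper's is shorter but rests entirely on the external source. Your duality reduction for the second equivalence is fine.

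The write-up needs repairs, though nothing actually fails.
\begin{itemize}
\item Delete the placeholder first line of the displayed definition and the meaningless ``one subtlety'' sentence. Define $U$ as in your ``simplest candidate'' sentence.
\item The case you call critical, $(a,e)\preceq(e,d)$, never uses the hypothesis, because $e\unlhd d$ forces $d\in]e,1]$. The comparisons that genuinely need it are $(a,h)\preceq(e,d)$ with $a\in]0,e[$, $h\in\widehat B_e$, and $h\unlhd d\in I_e^2\cup I_e^3$ (for instance $h=d$). There $U(a,h)=a$ and $U(e,d)=d$. The mirror case $(h,a)\preceq(d,e)$ needs it too.
\item Your list of trivial cases omits $U(y,s)=0$. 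Here use that $A_e\cup\{e\}$ is a lower set, being the complement of the upper set $\widehat B_e$. Also, $e\unlhd a$ with $a\in A_e$ would place $e$ in a non-trivial cycle. Hence $(x,r)\in A_e\times A_e$ and $U(x,r)=0$.
\end{itemize}
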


\begin{proof}
Every bounded trellis is, in particular, a bounded psoset. Moreover,
when the notation of~\cite{KL26} is adopted, the commutativity of
uninorms implies that the classes
$\widetilde{\mathcal{U}}_{\mathrm{max}}^{r}$ and
$\widetilde{\mathcal{U}}_{\mathrm{min}}^{r}$ in
Definition~\ref{def:umaxr-uminr-trellis} coincide, respectively, with
the classes $\mathcal{U}_{\mathrm{max}r}^{\mathrm{ps}}$ and
$\mathcal{U}_{\mathrm{min}r}^{\mathrm{ps}}$ introduced for bounded
psosets in~\cite{KL26}. Therefore, the first and second equivalences
follow from Remarks~8\textup{(i)} and~7\textup{(i)}
of~\cite{KL26}, respectively.
\end{proof}


Part~\textup{(iv)} of Proposition~\ref{prop:r-max-blocks} provides the
comparison needed for $[0,e[$, while the second equivalence in
Proposition~\ref{prop:r-nonempty} provides the dual comparison for
$]e,1]$. Together, they yield the following conclusions concerning the
greatest and smallest elements of these intervals.

\begin{corollary}\label{cor:r-boundary-elements}
The following statements hold:
\begin{enumerate}
\item[\textup{(i)}]
If $\widetilde{\mathcal{U}}_{\mathrm{max}}^{r}\ne\varnothing$ and
$I_e^2\cup I_e^3\ne\varnothing$, then $[0,e[$ has a greatest element
$g$, and
\[
g=e\wedge c
\qquad\text{for every }c\in I_e^2\cup I_e^3.
\]

\item[\textup{(ii)}]
If $\widetilde{\mathcal{U}}_{\mathrm{min}}^{r}\ne\varnothing$ and
$I_e^1\cup I_e^3\ne\varnothing$, then $]e,1]$ has a smallest element
$s$, and
\[
s=e\vee c
\qquad\text{for every }c\in I_e^1\cup I_e^3.
\]
\end{enumerate}
In each case, the extremal element is independent of the choice of $c$.
\end{corollary}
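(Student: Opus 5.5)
I will prove (i) in detail; (ii) is its order dual, obtained by replacing $\unlhd$, $\wedge$, $[0,e[$, $I_e^2$, Proposition~\ref{prop:r-max-blocks}(iv) and the first equivalence of Proposition~\ref{prop:r-nonempty} by $\unrhd$, $\vee$, $]e,1]$, $I_e^1$ and the second equivalence. The idea is to show that every $a\in[0,e[$ is a lower bound of $\{e,c\}$. This makes the meet $e\wedge c$ (which exists because $\mathbb{P}$ is a trellis) an upper bound of $[0,e[$, and it then remains to show that $e\wedge c$ itself lies in $[0,e[$.

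\textbf{Step 1: elements of $[0,e[$ are below $c$.} Fix $c\in I_e^2\cup I_e^3$. Since $c\in\widehat B_e$ and $\widetilde{\mathcal{U}}_{\mathrm{max}}^{r}\neq\varnothing$, pick any $U$ in this class. Proposition~\ref{prop:r-max-blocks}(iv) then gives $a\lhd c$ for every $a\in[0,e[$; for $a\in]0,e[$ this also follows from the first equivalence of Proposition~\ref{prop:r-nonempty}. Thus every $a\in[0,e[$ satisfies $a\unlhd e$ and $a\unlhd c$, so it is a lower bound of $\{e,c\}$. By the definition of the infimum in a psoset, $a\unlhd e\wedge c$.

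\textbf{Step 2: the meet lies in $[0,e[$.} Put $g=e\wedge c$. Then $g\unlhd e$ by definition of the meet. Moreover $g\neq e$, since otherwise $e\unlhd c$, contradicting $c\parallel e$. Hence $g\in[0,e[$, and by Step 1 it is the greatest element of $[0,e[$.

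\textbf{Step 3: independence of $c$.} Antisymmetry makes a greatest element unique. Since Step 2 holds for every choice of $c$, each value $e\wedge c$ equals the same element $g$.

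The only slightly delicate point is Step 1: we must use the \emph{direct} comparison $a\unlhd c$ rather than mere reachability $a\lesssim c$, because the two can differ on a proper trellis. Proposition~\ref{prop:r-max-blocks}(iv) supplies exactly this for $a\in[0,e[$; for $a=0$ it is trivial anyway. No transitivity is needed, since the defining property of the infimum is applied directly to the lower bound $a$.
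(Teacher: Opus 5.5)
Your proof is correct and takes the same route as the paper: Proposition~\ref{prop:r-max-blocks}(iv) makes every element of $[0,e[$ a lower bound of $\{e,c\}$, $c\parallel e$ places $e\wedge c$ strictly below $e$, and uniqueness of the greatest element gives independence of $c$. For (ii), the paper likewise uses the second equivalence of Proposition~\ref{prop:r-nonempty} together with the trivial comparison $c\unlhd 1$.
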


\begin{proof}
For (i), choose $U\in\widetilde{\mathcal{U}}_{\mathrm{max}}^{r}$ and fix
$c\in I_e^2\cup I_e^3$. Every $x\in[0,e[$ satisfies $x\unlhd c$ by
Proposition~\ref{prop:r-max-blocks}. Thus $x$ is a lower bound of $e$ and $c$,
and $x\unlhd e\wedge c$. Since $c\parallel e$, one has
$e\wedge c\lhd e$. Hence $e\wedge c$ is the greatest element of $[0,e[$.
Its uniqueness makes it independent of $c$.

For (ii), fix $c\in I_e^1\cup I_e^3$. By
Proposition~\ref{prop:r-nonempty}, one has $c\lhd y$ for every
$y\in]e,1[$; the inequality $c\unlhd1$ holds because $1$ is the greatest
element. Hence every $y\in]e,1]$ is an upper bound of $e$ and $c$, and
therefore $e\vee c\unlhd y$. Since $c\parallel e$, one has
$e\lhd e\vee c$. Thus $e\vee c$ is the smallest element of $]e,1]$.
Its uniqueness makes it independent of $c$.
\end{proof}

We shall also use the following estimates for the direct relation $\unlhd$,
rather than only for  the   relation $\lesssim$.

\begin{proposition}\label{prop:r-order-estimates}
Let $U\in\widetilde{\mathcal{U}}_{\mathrm{max}}^{r}$ and put $J_e^+=I_e^2\cup I_e^3$. Then
\begin{align*}
 (x\wedge e)\vee(y\wedge e)&\unlhd U(x,y)
     &&(x,y\in J_e^+),\\
 x&\unlhd U(x,y)
     &&(x\in J_e^+,\ y\in]e,1]),\\
 x\vee y&\unlhd U(x,y)
     &&(x,y\in]e,1]).
\end{align*}
If $I_e^3\ne\varnothing$ and $g$ is the element in
Corollary~\ref{cor:r-boundary-elements}, then $g\unlhd U(x,y)$ for all
$x,y\in I_e^3$.
\end{proposition}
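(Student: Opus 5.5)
The plan is to get every estimate directly from joint monotonicity of $U$, comparing against a point where $U$ is already known. The known values come from Proposition~\ref{prop:r-max-blocks}(i), neutrality, and Proposition~\ref{prop:r-cone-closure}. Only direct comparisons $\unlhd$ are used, never chains, so non-transitivity causes no difficulty. The one point that needs care is that a join in a trellis is a genuine least upper bound: every common upper bound $u$ of $a,b$ satisfies $a\vee b\unlhd u$.

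For the first inequality, let $x,y\in J_e^+=I_e^2\cup I_e^3\subseteq\widehat B_e$ and put $a=x\wedge e$.
\begin{itemize}
\item Since $a\unlhd e$, we have $a\in[0,e]$. If $a=e$ we would get $e\unlhd x$, contradicting $x\parallel e$. Hence $a\in[0,e[\subseteq A_e$.
\item Because $y\in\widehat B_e$, Proposition~\ref{prop:r-max-blocks}(i) gives $U(a,y)=a$.
\item From $a\unlhd x$ and $y\unlhd y$ we get $(a,y)\preceq(x,y)$, so monotonicity gives $a=U(a,y)\unlhd U(x,y)$.
\end{itemize}
Symmetrically, $b=y\wedge e$ satisfies $b=U(x,b)\unlhd U(x,y)$. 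Thus $U(x,y)$ is a common upper bound of $a$ and $b$, and $(x\wedge e)\vee(y\wedge e)=a\vee b\unlhd U(x,y)$.

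The second inequality is immediate. For $x\in J_e^+$ and $y\in]e,1]$ we have $(x,e)\preceq(x,y)$, so $x=U(x,e)\unlhd U(x,y)$. The third inequality, for $x,y\in]e,1]$, is exactly the last assertion of Proposition~\ref{prop:r-cone-closure}.

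For the final claim, suppose $I_e^3\ne\varnothing$.
\begin{itemize}
\item The existence of $U$ shows $\widetilde{\mathcal{U}}_{\mathrm{max}}^{r}\ne\varnothing$, and $I_e^2\cup I_e^3\ne\varnothing$. So Corollary~\ref{cor:r-boundary-elements}(i) applies and gives $g=e\wedge c$ for every $c\in I_e^2\cup I_e^3$.
\item For $x,y\in I_e^3$ this means $x\wedge e=y\wedge e=g$.
\item The first inequality then yields $g=g\vee g\unlhd U(x,y)$.
\end{itemize}

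I expect no real obstacle. The only step needing attention is the first one: we must check that $x\wedge e$ lies strictly below $e$, so that Proposition~\ref{prop:r-max-blocks}(i) can be applied with the second argument in $\widehat B_e$.
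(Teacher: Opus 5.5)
Your proof is correct and follows the paper's argument step for step. It uses the same comparisons $(x\wedge e,y)\preceq(x,y)$ and $(x,e)\preceq(x,y)$ with the forced values from Proposition~\ref{prop:r-max-blocks}(i), and takes the third inequality from Proposition~\ref{prop:r-cone-closure}; for the last claim the paper compares $(g,y)\preceq(x,y)$ directly using $g\unlhd x$ and $U(g,y)=g$, whereas you reach it through the first inequality via $x\wedge e=y\wedge e=g$, which is the same monotonicity step.
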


\begin{proof}
For $x\in J_e^+$, the element $x\wedge e$ belongs to $[0,e[$. Therefore, for
$x,y\in J_e^+$,
\[x\wedge e=U(x\wedge e,y)\unlhd U(x,y).\]
The analogous inequality holds for $y\wedge e$, so their join is also below
$U(x,y)$. If $y\in]e,1]$, then
$x=U(x,e)\unlhd U(x,y)$. The third inequality follows from
Proposition~\ref{prop:r-cone-closure}. Finally, $g\unlhd x$ for every
$x\in I_e^3$ and $U(g,y)=g$; monotonicity gives
$g\unlhd U(x,y)$.
\end{proof}

\subsection{
Values on
\texorpdfstring{$I_e^3\times I_e^3$}{Ie3 x Ie3} for
\texorpdfstring{$\widetilde{\mathcal{U}}_{\mathrm{max}}^{r}$}{Umax-r}}

Let $U\in\widetilde{\mathcal{U}}_{\mathrm{max}}^{r}$.
Proposition~\ref{prop:r-max-blocks} shows that
$
U(B_e\times B_e)\subseteq B_e
\ \text{and}\
U(B_e\times I_e^3)\subseteq\widehat B_e.
$
By the commutativity of $U$, the second inclusion also gives
$
U(I_e^3\times B_e)\subseteq\widehat B_e.
$
Consequently, if $(x,y)\in\widehat B_e\times\widehat B_e$ and at least one of
$x$ and $y$ belongs to $B_e$, then $U(x,y)\in\widehat B_e$.
Hence, a value of $U$ on $\widehat B_e\times\widehat B_e$ can leave
$\widehat B_e$ only when both arguments belong to $I_e^3$.

 Since $X=A_e\mathbin{\cup}\{e\}
\mathbin{\cup}\widehat B_e$, every such value belongs to
$A_e\cup\{e\}$. Hence, if $(x,y)\in I_e^3\times I_e^3$ and
$U(x,y)\notin\widehat B_e$, then either $U(x,y)\in A_e$ or
$U(x,y)=e$. This motivates the following definitions:
\[
\Delta_U^-=
\{(x,y)\in I_e^3\times I_e^3\mid U(x,y)\in A_e\},
\qquad
E_U^-=
\{(x,y)\in I_e^3\times I_e^3\mid U(x,y)=e\}.
\]
Thus, $\Delta_U^-$ consists of the pairs
$(x,y)\in I_e^3\times I_e^3$ for which $U(x,y)\in A_e$, whereas
$E_U^-$ consists of those for which $U(x,y)=e$.

\begin{theorem}\label{thm:r-max-exceptional}
Let $U\in\widetilde{\mathcal{U}}_{\mathrm{max}}^{r}$. Then the following statements
hold.
\begin{enumerate}[label=\textup{(\roman*)}, itemsep=2pt]
 \item $\Delta_U^-$ and $E_U^-$ are symmetric lower sets in
       $(I_e^3\times I_e^3,\preceq)$ and $\Delta_U^-\cap E_U^-=\varnothing$.
 \item If $\Delta_U^-\ne\varnothing$, then all its pairs have the same value
       $g_U\in[0,e[$. Moreover,
       \[
       U(a,g_U)=a\qquad(a\in A_e),
       \]
       $g_U$ is the greatest element of $A_e$, and
       \[
       g_U=e\wedge c\qquad(c\in I_e^3).
       \]
 \item For every $x\in I_e^3$, there is at most one $y\in I_e^3$ such that
       $(x,y)\in E_U^-$. Hence the symmetric set $E_U^-$ satisfies the
       unique-partner condition, and its support is
       \[
       C_{U,0}^-=
       \{x\in I_e^3\mid U(x,y)=e\text{ for some }y\in I_e^3\}.
       \]
       Every element of $C_{U,0}^-$ is minimal in $(I_e^3,\unlhd)$, and
       distinct elements of $C_{U,0}^-$ are incomparable.
 \item Put
       \[
       C_{U,-}^-=
       \{x\in I_e^3\mid U(x,y)\in A_e
                    \text{ for some }y\in I_e^3\}.
       \]
       Then $C_{U,-}^-$ is a lower set in $I_e^3$,
       $C_{U,-}^-\cap C_{U,0}^-=\varnothing$, and every element of
       $C_{U,-}^-$ is incomparable with every element of $C_{U,0}^-$.
\end{enumerate}
\end{theorem}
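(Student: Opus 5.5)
The plan is to derive everything from three tools: commutativity and associativity of $U$; the forced values $U(a,d)=a$ for $a\in A_e$, $d\in\widehat B_e$ from Proposition~\ref{prop:r-max-blocks}(i); and the upper-set property of $\widehat B_e$ from Lemma~\ref{lem:top-block-closure}. I will prove (iii) before finishing (i), because the lower-set property of $E_U^-$ is easiest to get from minimality.

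\emph{Lower sets for (i).} Symmetry of both sets is immediate from commutativity, and disjointness holds because $e\notin A_e$. Let $(x',y')\preceq(x,y)$ in $I_e^3\times I_e^3$ with $U(x,y)\in A_e\cup\{e\}$. Monotonicity gives $U(x',y')\unlhd U(x,y)$. If $U(x',y')\in\widehat B_e$, the upper-set property would force $U(x,y)\in\widehat B_e$, which is false. Hence $U(x',y')\in A_e\cup\{e\}$.

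For $\Delta_U^-$, suppose $U(x,y)\in A_e$. If $U(x',y')=e$, then $e\unlhd U(x,y)\lesssim e$, which puts $e$ on a non-trivial cycle, contrary to $e\in K$. So $(x',y')\in\Delta_U^-$.

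For $E_U^-$, I first need minimality. Suppose $U(x,y)=e$ and $x'\unlhd x$ with $x'\in I_e^3$. By the argument just given, $U(x',y)\in A_e\cup\{e\}$. Associativity gives
\[
U(x,U(x',y))=U(x',U(x,y))=U(x',e)=x'.
\]
If $U(x',y)\in A_e$, the left side equals $U(x',y)$ by Proposition~\ref{prop:r-max-blocks}(i), so $x'\in A_e$, which is impossible. If $U(x',y)=e$, the left side equals $x$, so $x'=x$. Thus every element of $C_{U,0}^-$ is minimal in $(I_e^3,\unlhd)$, and by symmetry the same holds for partners. Consequently $(x',y')\preceq(x,y)\in E_U^-$ forces $(x',y')=(x,y)$, so $E_U^-$ is a lower set.

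\emph{Part (iii).} Suppose $U(x,y)=U(x,z)=e$. Then
\[
y=U(y,U(x,z))=U(U(y,x),z)=U(e,z)=z,
\]
which is the unique-partner condition. Distinct elements of $C_{U,0}^-$ are incomparable because each is minimal.

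\emph{Part (iv).} The set $C_{U,-}^-$ is a lower set because $\Delta_U^-$ is one: apply it to pairs $(x',y)\preceq(x,y)$. For disjointness, suppose $U(x,y)=e$ and $U(x,z)=a\in A_e$. Then
\[
z=U(z,U(x,y))=U(U(x,z),y)=U(a,y)=a\in A_e,
\]
a contradiction. For incomparability, take $x\in C_{U,-}^-$ and $z\in C_{U,0}^-$. If $x\unlhd z$, minimality of $z$ gives $x=z$, contradicting disjointness. If $z\unlhd x$, the lower-set property puts $z$ in $C_{U,-}^-$, again contradicting disjointness.

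\emph{Part (ii), and the main obstacle.} Let $a=U(x,y)\in A_e$ with $(x,y)\in\Delta_U^-$. For every $b\in A_e$ we have $U(y,b)=b$, so
\[
U(a,b)=U(x,U(y,b))=U(x,b)=b.
\]
Hence $a$ is a neutral element of $U$ restricted to $A_e$. Any two such values $a,a'$ then satisfy $a=U(a,a')=a'$, so all pairs in $\Delta_U^-$ share one value $g_U$, and $U(a,g_U)=a$ for all $a\in A_e$. By Proposition~\ref{prop:r-cone-closure}, $b=U(b,g_U)\lesssim g_U$ for every $b\in A_e$.

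The step I expect to be hardest is upgrading this reachability to the direct statements $g_U\in[0,e[$, $g_U$ is the $\unlhd$-greatest element of $A_e$, and $g_U=e\wedge c$. I will route this through the element $g=e\wedge c\in[0,e[$ of Corollary~\ref{cor:r-boundary-elements}. Proposition~\ref{prop:r-order-estimates} gives $g\unlhd U(x,y)=g_U$. It then suffices to show $g_U\unlhd g$, for instance by a monotone comparison $U(x,y)\unlhd U(x,e)$-type bound combined with the neutrality of $g_U$ on $A_e$ and antisymmetry. Once $g_U=g$ is established, $g\unlhd a$ from Proposition~\ref{prop:r-max-blocks}(iv) and neutrality yield that $g_U$ is the greatest element of $A_e$. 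If that comparison does not go through directly, the fallback is to exclude $g_U\in I_e^1$ using $e\in K$ together with middle-transitivity of $e$.
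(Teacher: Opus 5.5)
Your arguments for (i), (iii) and (iv) are sound. Your proof that $\Delta_U^-$ is a lower set uses the upper-set property of $\widehat B_e$ and the cycle argument, and does not use $g_U$, so proving (ii) last is legitimate. The paper instead proves (ii) first and then uses $U(p,q)\unlhd g_U\lhd e$.

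The genuine gap is in (ii). You never obtain the \emph{direct} comparisons $b\unlhd g_U$ for $b\in A_e$ and $g_U\unlhd e$; all you have is $b\lesssim g_U$ and $g\unlhd g_U$. Your suggested bound ``$U(x,y)\unlhd U(x,e)$'' is unavailable: $y\in I_e^3$ is incomparable with $e$, so $(x,y)\not\preceq(x,e)$. The fallback also fails. With only reachability, assuming $g_U\in I_e^1$ and taking a path step $g_U\unlhd x_1$ with $x_1\ne g_U$, you get a closed walk $g_U\unlhd x_1\lesssim g_U$. That cycle need not pass through $e$, so $e\in K$ gives no contradiction, and your sketch does not show how middle-transitivity of $e$ would yield $g_U\unlhd e$. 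Your closing sentence does not fill the hole either. Proposition~\ref{prop:r-max-blocks}(iv) compares elements of $[0,e[$ with elements of $\widehat B_e$, so it says nothing of the form $b\unlhd g_U$ for $b\in I_e^1$. Even granting $g_U=g$, that comparison would still be missing, so ``greatest element of $A_e$'' is not established.

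The missing idea is a one-line monotonicity step using the forced value at $1$. Since $b\unlhd 1$ and $1\in\widehat B_e$, you get $b=U(g_U,b)\unlhd U(g_U,1)=g_U$ for every $b\in A_e$, so $g_U$ is the $\unlhd$-greatest element of $A_e$. Now suppose $g_U\in I_e^1$ and take a shortest path $g_U=x_0\unlhd x_1\unlhd\cdots\unlhd x_n=e$. Then $n\ge 2$, and $x_1\in A_e$ by Lemma~\ref{lem:r-reachability}. Hence $x_1\unlhd g_U\unlhd x_1$, so antisymmetry gives $x_1=g_U$, contradicting minimality of $n$. Therefore $g_U\in[0,e[$, and it is the greatest element of $[0,e[$. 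Corollary~\ref{cor:r-boundary-elements} then gives $g_U=e\wedge c$ for every $c\in I_e^3$. With this, your detour through $g\unlhd g_U$ via Proposition~\ref{prop:r-order-estimates} becomes unnecessary.
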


\begin{proof}
The commutativity of $U$ shows that both sets are symmetric. Moreover,
since $e\notin A_e$, no pair can belong to both $\Delta_U^-$ and $E_U^-$;
hence $\Delta_U^-\cap E_U^-=\varnothing$.
Assume first that $(x,y)\in\Delta_U^-$ and write $U(x,y)=h\in A_e$.
For every $a\in A_e$, associativity and the forced mixed values give
\[
 U(a,h)=U(a,U(x,y))=U(U(a,x),y)=U(a,y)=a.                \tag{\(*\)}
\]
If another pair in $\Delta_U^-$ has value $k$, the analogue of~$(*)$ gives
$U(a,k)=a$ for every $a\in A_e$. Taking $a=k$ in~$(*)$ gives
$U(k,h)=k$, whereas taking $a=h$ in the corresponding identity for $k$
gives $U(h,k)=h$. Hence commutativity yields $h=k$. Denote this common
value by $g_U$. Since $a\unlhd 1$ for every
$a\in A_e$, formula~$(*)$ and monotonicity yield
\(a=U(g_U,a)\unlhd U(g_U,1)=g_U\).
Thus $g_U$ is the greatest element of $A_e$.

It remains to see that $g_U$ is strictly below $e$ in the original
pseudo-order. Suppose that
$g_U\in I_e^1$ and choose a shortest path
$g_U=x_0\unlhd x_1\unlhd\cdots\unlhd x_n=e$. Then $n\geq2$, and each
$x_i$ with $i<n$ belongs to $A_e$. The maximality of $g_U$ gives
$x_1\unlhd g_U$, while the first edge gives $g_U\unlhd x_1$.
Antisymmetry yields $x_1=g_U$, contradicting the choice of a shortest path.
Thus $g_U\in[0,e[$, and Corollary~\ref{cor:r-boundary-elements} gives
$g_U=e\wedge c$ for every $c\in I_e^3$. This proves (ii).

We next prove that $\Delta_U^-$ is a lower set. Suppose
$(p,q)\preceq(x,y)$ with $p,q,x,y\in I_e^3$ and
$(x,y)\in\Delta_U^-$. Then $U(p,q)\unlhd g_U$, and consequently
$U(p,q)\lesssim e$. Thus $U(p,q)\in A_e\cup\{e\}$. The second possibility
would give $e\unlhd g_U\lhd e$, contrary to antisymmetry. Hence
$(p,q)\in\Delta_U^-$. This completes (i) for $\Delta_U^-$.

Suppose $U(x,y)=e=U(x,z)$ with $x,y,z\in I_e^3$. Associativity gives
\[
 y=U(y,e)=U(y,U(x,z))=U(U(y,x),z)=U(e,z)=z,
\]
which proves uniqueness of the partner and hence the assertion about $E_U^-$.

Let $U(x,y)=e$ and suppose that $x'\lhd x$ with $x'\in I_e^3$. Put
$w=U(x',y)$. Monotonicity gives $w\unlhd e$. If $w=e$, uniqueness of the
partner of $y$ gives $x'=x$. Otherwise $w\in[0,e[$, and associativity yields
\[
 U(w,x)=U(U(x',y),x)=U(x',U(y,x))=U(x',e)=x'.
\]
The forced mixed value gives $U(w,x)=w$, which is impossible because
$w\in[0,e[$ and $x'\in I_e^3$. Thus $x$ is minimal in $I_e^3$. It follows that
the support $C_{U,0}^-$ is an antichain and that $E_U^-$ is a lower set. This
proves (iii) and the remaining assertion in (i).

Finally, the lower-set property of $C_{U,-}^-$ follows from that of
$\Delta_U^-$. If an element $x$ belonged to both supports, choose
$y,z\in I_e^3$ such that $U(x,y)=g_U$ and $U(x,z)=e$. Then
\[
 y=U(y,U(x,z))=U(U(y,x),z)=U(g_U,z)=g_U,
\]
a contradiction. If $p\in C_{U,0}^-$ and $q\in C_{U,-}^-$ were
comparable, then either $p\unlhd q$ or $q\unlhd p$. In the first case,
the lower-set property of $C_{U,-}^-$ would give $p\in C_{U,-}^-$; in the
second, the minimality of $p$ would give $q=p$. Both conclusions contradict
$C_{U,-}^-\cap C_{U,0}^-=\varnothing$.{} This proves (iv).
\end{proof}

After removing $\Delta_U^-$ and $E_U^-$, define the domain of the remaining partial operation by
\[
\Omega_U^-=
(\widehat B_e\times\widehat B_e)
 \setminus(\Delta_U^-\cup E_U^-),
\qquad
R_U^-=U|_{\Omega_U^-}.
\]


\begin{proposition}\label{prop:r-max-regular}
Let $U\in\widetilde{\mathcal{U}}_{\mathrm{max}}^{r}$. Then $R_U^-$ is commutative and
increasing on its domain, and
\[
 \operatorname{Ran}(R_U^-)\subseteq\widehat B_e.
\]
It has the following compatibility properties.
\begin{enumerate}[label=\textup{(\roman*)}, itemsep=2pt]
 \item If $(x,y)\in\Delta_U^-$ and $(y,z)\in\Omega_U^-$, then
       $
       (x,R_U^-(y,z))\in\Delta_U^-.
       $
 \item If $(x,y)\in E_U^-$ and $(y,z)\in\Omega_U^-$, then
       \[
       (x,R_U^-(y,z))\in\Omega_U^-,
       \qquad R_U^-(x,R_U^-(y,z))=z.
       \]
 \item If $(x,y),(y,z)\in\Omega_U^-$, then, for
       $\Lambda\in\{\Delta_U^-,E_U^-\}$,
       \[
       (R_U^-(x,y),z)\in\Lambda
       \quad\Leftrightarrow\quad
       (x,R_U^-(y,z))\in\Lambda.
       \]
 If both
$
\bigl(R_U^-(x,y),z\bigr)
\ \text{and}\
\bigl(x,R_U^-(y,z)\bigr)
$
belong to $\Omega_U^-$, then
\[
R_U^-(R_U^-(x,y),z)
=
R_U^-(x,R_U^-(y,z)).
\]
 \item If $(p,q)\in E_U^-$, $(x,y)\in\Omega_U^-$, and
       $(p,q)\preceq(x,y)$, then
       $
       e\lhd R_U^-(x,y).
       $
\end{enumerate}
\end{proposition}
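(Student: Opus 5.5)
Commutativity and monotonicity of $R_U^-$ come straight from $U$. $\Omega_U^-$ is symmetric because $\Delta_U^-$ and $E_U^-$ are symmetric (Theorem~\ref{thm:r-max-exceptional}(i)), and $R_U^-$ is increasing there since it is a restriction of $U$. For the range, take $(x,y)\in\Omega_U^-$. If one of $x,y$ lies in $B_e$, then Proposition~\ref{prop:r-max-blocks}(iii) and commutativity give $U(x,y)\in\widehat B_e$. If both lie in $I_e^3$, the value cannot be in $A_e$ (the pair is not in $\Delta_U^-$) and cannot be $e$ (the pair is not in $E_U^-$). The decomposition $X=A_e\cup\{e\}\cup\widehat B_e$ then forces $U(x,y)\in\widehat B_e$.

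Items (i)--(iii) follow from associativity of $U$. Throughout, I use that $\Delta_U^-,E_U^-,\Omega_U^-$ exactly partition $\widehat B_e\times\widehat B_e$ according to whether $U$ takes values in $A_e$, equals $e$, or lies in $\widehat B_e$.

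\emph{Item (i).} Set $w=R_U^-(y,z)\in\widehat B_e$. Then $U(x,w)=U(U(x,y),z)=U(g_U,z)=g_U\in A_e$, using Theorem~\ref{thm:r-max-exceptional}(ii) and the forced value $U(a,d)=a$ for $a\in A_e$, $d\in\widehat B_e$. For $(x,w)\in\Delta_U^-$ I still need $x,w\in I_e^3$. Here $x\in I_e^3$ is given, and $w\in I_e^3$ follows from Proposition~\ref{prop:r-max-blocks}(iii), since the value lies in $A_e$.

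\emph{Item (ii).} Here $U(x,w)=U(U(x,y),z)=U(e,z)=z\in\widehat B_e$. So $(x,w)$ lies in neither $\Delta_U^-$ nor $E_U^-$, hence in $\Omega_U^-$, and its value is $z$.

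\emph{Item (iii).} Both pairs have the same $U$-value $U(x,y,z)$ by associativity. Since membership in $\Delta_U^-$, $E_U^-$, or $\Omega_U^-$ is determined by that value (together with membership in $\widehat B_e\times\widehat B_e$, which holds by the range claim), the equivalences and the equality follow at once.

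\emph{Item (iv)} is where I expect the main obstacle, because strictness and the direct relation $e\unlhd R_U^-(x,y)$ both have to be obtained.
\begin{itemize}
\item Monotonicity gives $e=U(p,q)\unlhd U(x,y)=R_U^-(x,y)$ directly, since $(p,q)\preceq(x,y)$ in the product pseudo-order.
\item Equality is excluded because $(x,y)\in\Omega_U^-$ means $U(x,y)\ne e$, unless $x,y\notin I_e^3$.
\item In that remaining case the value lies in $B_e$ by the range argument (or in $\widehat B_e$ in general), which is again distinct from $e$.
\end{itemize}
Hence $e\lhd R_U^-(x,y)$. I would double-check that no transitivity is used anywhere in these steps: only single-step monotonicity and the partition of $X$ are needed.
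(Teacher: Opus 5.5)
Your proposal is correct and follows the paper's proof essentially step by step. You obtain commutativity and monotonicity by restriction, the range inclusion and the classification of the nested pairs from Proposition~\ref{prop:r-max-blocks}(iii) and the partition $X=A_e\cup\{e\}\cup\widehat B_e$, items (i)--(iii) from associativity, and (iv) from monotonicity along $(p,q)\preceq(x,y)$. Your strictness argument in (iv) via $\operatorname{Ran}(R_U^-)\subseteq\widehat B_e$ is slightly more careful than the paper's remark that equality would place $(x,y)$ in $E_U^-$, which tacitly assumes $x,y\in I_e^3$.
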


\begin{proof}
Commutativity and monotonicity are inherited from $U$. The range inclusion
follows from Proposition~\ref{prop:r-max-blocks} and the definitions of
$\Delta_U^-$, $E_U^-$, and $\Omega_U^-$.

Let $(x,y)\in\Delta_U^-$ and $(y,z)\in\Omega_U^-$. By
Theorem~\ref{thm:r-max-exceptional} and associativity,
\[
 U(x,R_U^-(y,z))=U(U(x,y),z)=U(g_U,z)=g_U.
\]
The localization in Proposition~\ref{prop:r-max-blocks} then gives (i).
If $(x,y)\in E_U^-$, the same calculation gives
\[
 U(x,R_U^-(y,z))=U(e,z)=z\in\widehat B_e,
\]
which proves (ii).

For (iii), associativity yields
\[
 U(R_U^-(x,y),z)=U(x,R_U^-(y,z)).                      \tag{\(**\)}
\]
For $(x,y)\in \widehat B_e\times\widehat B_e$, $U(x,y)$
 lies in exactly one of
$A_e$, $\{e\}$, and $\widehat B_e$; these three alternatives define
$\Delta_U^-$, $E_U^-$, and $\Omega_U^-$, respectively. The equality
in~$(**)$ therefore proves both equivalences and conditional associativity.

Finally, the assumptions in (iv) and monotonicity give
$e=U(p,q)\unlhd U(x,y)=R_U^-(x,y)$. Equality would put $(x,y)$ in $E_U^-$,
contrary to $(x,y)\in\Omega_U^-$. Thus the inequality is strict.
\end{proof}

\subsection{Structural properties of the class
\texorpdfstring{$\widetilde{\mathcal{U}}_{\mathrm{min}}^{r}$}{Umin-r}}

We next state the corresponding properties of
$\widetilde{\mathcal{U}}_{\mathrm{min}}^{r}$.
The notation introduced here will be used in the representation of this
class.

\begin{proposition}\label{prop:r-min-properties}
Let $U\in\widetilde{\mathcal{U}}_{\mathrm{min}}^{r}$ and put $J_e^-=I_e^1\cup I_e^3$.
Then the following statements hold.
\begin{enumerate}[label=\textup{(\roman*)}, itemsep=2pt]
 \item $U(b,x)\in B_e$ for $b\in B_e$ and $x\in X$,
       \[
       U(b,x)=b\qquad(b\in B_e,\ x\in\{e\}\cup\widehat A_e),
       \]
       and $U(1,x)=1$ for every $x\in X$.
 \item
       \[
       U(A_e\times A_e)\subseteq A_e,
       \qquad U(A_e\times I_e^3)\subseteq\widehat A_e.
       \]
       Hence a value on $\widehat A_e\times\widehat A_e$ can belong to
       $B_e\cup\{e\}$ only when both arguments belong to $I_e^3$.
 \item For $a\in\widehat A_e$ and $b\in B_e$,
       $a\lesssim b$ and $b\not\lesssim a$. If $b\in]e,1]$, then
       $a\lhd b$.
 \item
       \begin{align*}
       U(x,y)&\unlhd(x\vee e)\wedge(y\vee e)
          &&(x,y\in J_e^-),\\
       U(x,y)&\unlhd x
          &&(x\in J_e^-,\ y\in[0,e[),\\
       U(x,y)&\unlhd x\wedge y
          &&(x,y\in[0,e]).
       \end{align*}
 \item If $I_e^3\ne\varnothing$ and $s$ is given by
       Corollary~\ref{cor:r-boundary-elements}, then
       \[
       U(x,y)\unlhd s\qquad(x,y\in I_e^3).
       \]
\end{enumerate}
\end{proposition}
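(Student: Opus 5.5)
The plan is to dualize the proofs of Propositions~\ref{prop:r-max-blocks}, \ref{prop:r-order-estimates}, and Corollary~\ref{cor:r-boundary-elements}, using the defining identity $U(b,x)=b$ for $(b,x)\in B_e\times\widehat A_e$ together with Proposition~\ref{prop:r-cone-closure} and Lemma~\ref{lem:r-reachability}. The statements are taken in order.

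For (i), the values on $B_e\times B_e$ lie in $B_e$ by Proposition~\ref{prop:r-cone-closure}. Neutrality gives $U(b,e)=b$, and the definition of $\widetilde{\mathcal{U}}_{\mathrm{min}}^{r}$ gives $U(b,x)=b$ on $\widehat A_e$. Since $X=\widehat A_e\cup\{e\}\cup B_e$ by Lemma~\ref{lem:bottom-block-closure}, this covers every $x\in X$. For $U(1,x)=1$, only $x\in B_e$ needs attention. Here $e\lesssim x$, so Lemma~\ref{lem:r-reachability} gives $1=U(1,e)\lesssim U(1,x)$. A path starting at the greatest element $1$ can contain only $1$, hence $U(1,x)=1$.

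For (ii), the first inclusion is again Proposition~\ref{prop:r-cone-closure}. For the second, take $a\in A_e$ and $c\in I_e^3$. Then $U(a,c)\lesssim U(e,c)=c$. If $U(a,c)$ lay in $B_e\cup\{e\}$, we would get $e\lesssim U(a,c)\lesssim c$, contradicting $c\in I_e^3$. The concluding sentence follows by commutativity and the partition of $X$.

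For (iii), take $a\in\widehat A_e$ and $b\in B_e$. Monotonicity along a path from $e$ to $b$ gives $a=U(e,a)\lesssim U(b,a)=b$. A path $b\lesssim a$ would yield a non-trivial cycle through $e$ when $a\in A_e$, or $e\lesssim a$ when $a\in I_e^3$; both are excluded. If $e\unlhd b$, the path has a single step, so $a=U(e,a)\unlhd U(b,a)=b$. The inequality is strict because the regions are disjoint.

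For (iv), take $x\in J_e^-$. Then $x\vee e\in]e,1]$, since $x\parallel e$ forces $x\vee e\ne e$. Using (i) we get $U(x,y)\unlhd U(x\vee e,y)=x\vee e$, and likewise $U(x,y)\unlhd y\vee e$, so $U(x,y)$ is below the meet of these two bounds. The second inequality is $U(x,y)\unlhd U(x,e)=x$. The third is taken from Proposition~\ref{prop:r-cone-closure}.

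For (v), use Corollary~\ref{cor:r-boundary-elements}(ii). It gives $x\unlhd s$ for $x\in I_e^3$, since $s=e\vee x$. Also $U(s,y)=s$ by (i). Monotonicity then yields $U(x,y)\unlhd U(s,y)=s$.

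The main obstacle is keeping direct $\unlhd$ comparisons separate from $\lesssim$. The only places where single-step comparisons are genuinely needed are the strict relation in (iii) and the joins and meets in (iv) and (v). There one must check that each monotonicity step compares comparable pairs directly, and that $x\vee e\in]e,1]$ truly holds in a trellis.
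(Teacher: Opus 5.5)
Your proposal is correct. It follows the paper's approach: the paper states this proposition without a separate proof, as the dual of Propositions~\ref{prop:r-max-blocks} and~\ref{prop:r-order-estimates}, and your argument is exactly that dualization, using Corollary~\ref{cor:r-boundary-elements} for part~(v).
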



For $U\in\widetilde{\mathcal{U}}_{\mathrm{min}}^{r}$, define
\[
 \Delta_U^+=
 \{(x,y)\in I_e^3\times I_e^3\mid U(x,y)\in B_e\},
 \qquad
 E_U^+=
 \{(x,y)\in I_e^3\times I_e^3\mid U(x,y)=e\},
\]
and
\[
 \Omega_U^+=(\widehat A_e\times\widehat A_e)\setminus(\Delta_U^+\cup E_U^+),
 \qquad
 R_U^+=U|_{\Omega_U^+}.
\]

\begin{theorem}\label{thm:r-min-exceptional}
Let $U\in\widetilde{\mathcal{U}}_{\mathrm{min}}^{r}$. Then the following statements
hold.
\begin{enumerate}[label=\textup{(\roman*)}, itemsep=2pt]
 \item $\Delta_U^+$ and $E_U^+$ are symmetric upper sets in
       $(I_e^3\times I_e^3,\preceq)$ and $\Delta_U^+ \cap E_U^+=\varnothing$.
 \item If $\Delta_U^+\ne\varnothing$, then all its pairs have the same value
       $s_U\in]e,1]$. Moreover,
       \[
       U(b,s_U)=b\qquad(b\in B_e),
       \]
       $s_U$ is the smallest element of $B_e$, and
       \[
       s_U=e\vee c\qquad(c\in I_e^3).
       \]
 \item The symmetric set $E_U^+$ satisfies the unique-partner condition. Its
       support consists of maximal elements of $I_e^3$ and is an antichain.
 \item The set $\operatorname{supp}(\Delta_U^+)$ is an upper set in $I_e^3$. It is
       disjoint from, and elementwise incomparable with, the support of
       $E_U^+$.
 \item $R_U^+$ is commutative and increasing on its domain, and
       $\operatorname{Ran}(R_U^+)\subseteq\widehat A_e$. It satisfies the
       following conditions:
       \begin{enumerate}[label=\textup{(\alph*)}, itemsep=2pt]
        \item if $(x,y)\in\Delta_U^+$ and $(y,z)\in\Omega_U^+$, then
              $(x,R_U^+(y,z))\in\Delta_U^+$;
        \item if $(x,y)\in E_U^+$ and $(y,z)\in\Omega_U^+$, then
              \[
              (x,R_U^+(y,z))\in\Omega_U^+,
              \qquad R_U^+(x,R_U^+(y,z))=z;
              \]
        \item if $(x,y),(y,z)\in\Omega_U^+$, then, for
              $\Lambda\in\{\Delta_U^+,E_U^+\}$,
              \[
              (R_U^+(x,y),z)\in\Lambda
              \quad\Leftrightarrow\quad
              (x,R_U^+(y,z))\in\Lambda.
              \]
              If both
$
\bigl(R_U^+(x,y),z\bigr)
\  \text{and}\
\bigl(x,R_U^+(y,z)\bigr)
$
belong to $\Omega_U^+$, then
\[
R_U^+(R_U^+(x,y),z)
=
R_U^+(x,R_U^+(y,z)).
\]
        \item if $(x,y)\in\Omega_U^+$, $(p,q)\in E_U^+$, and
              $(x,y)\preceq(p,q)$, then
              $R_U^+(x,y)\lhd e$.
       \end{enumerate}
\end{enumerate}
\end{theorem}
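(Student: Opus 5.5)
The plan is to dualize the proofs of Theorem~\ref{thm:r-max-exceptional} and Proposition~\ref{prop:r-max-regular}. Throughout, $\unlhd$ is replaced by its converse, $A_e$ and $B_e$ are interchanged, and the decomposition $X=\widehat A_e\cup\{e\}\cup B_e$ of Lemma~\ref{lem:bottom-block-closure} is used. The forced values come from Proposition~\ref{prop:r-min-properties}: $U(b,x)=b$ for $b\in B_e$ and $x\in\{e\}\cup\widehat A_e$. By part~(ii) of that proposition, a value on $\widehat A_e\times\widehat A_e$ outside $\widehat A_e$ occurs only on $I_e^3\times I_e^3$, and it then lies in $B_e$ or equals $e$. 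This makes $\Delta_U^+$, $E_U^+$ and $\Omega_U^+$ a partition of $\widehat A_e\times\widehat A_e$. Symmetry of all three sets follows from commutativity.

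For (ii), let $(x,y)\in\Delta_U^+$ with $h=U(x,y)\in B_e$. For every $b\in B_e$, associativity gives
$U(b,h)=U(U(b,x),y)=U(b,y)=b$. If $k$ is the value of another pair, the same identity yields $U(k,h)=k$ and $U(h,k)=h$, so $h=k=:s_U$. Monotonicity with $0\unlhd b$ gives $s_U=U(0,s_U)\unlhd U(b,s_U)=b$, so $s_U$ is the smallest element of $B_e$.

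I expect the main obstacle to be excluding $s_U\in I_e^2$. I would take a shortest path $e=x_0\unlhd\cdots\unlhd x_n=s_U$ with $n\ge2$; all $x_i$ with $i>0$ lie in $B_e$ by Lemma~\ref{lem:r-reachability}. Minimality of $s_U$ gives $s_U\unlhd x_{n-1}$, and the last edge gives $x_{n-1}\unlhd s_U$. Antisymmetry then forces $x_{n-1}=s_U$, contradicting the choice of a shortest path. Hence $s_U\in]e,1]$. Since $\Delta_U^+\ne\varnothing$ makes the class non-empty and $I_e^3\neq\varnothing$, Corollary~\ref{cor:r-boundary-elements}(ii) then gives $s_U=e\vee c$.

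For (i), take $(x,y)\preceq(p,q)$ with $(x,y)\in\Delta_U^+$. Then $s_U\unlhd U(p,q)$, so $e\lesssim U(p,q)$ and $U(p,q)\in B_e\cup\{e\}$. The value $e$ is excluded because $e\lhd s_U\unlhd e$ contradicts antisymmetry, so $\Delta_U^+$ is an upper set.

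For (iii), uniqueness of the partner follows exactly as before: $y=U(y,U(x,z))=U(e,z)=z$. For maximality, let $U(x,y)=e$ and $x\lhd x'$ with $x'\in I_e^3$, and put $w=U(x',y)$, so $e\unlhd w$. If $w=e$, uniqueness of the partner forces $x'=x$, which is impossible. Otherwise $w\in]e,1]$, and then $U(w,x)=x'$ by associativity, while the forced value gives $U(w,x)=w\in B_e$. This contradiction shows $x$ is maximal. Hence the support of $E_U^+$ is an antichain, and $E_U^+$ is an upper set: if $(x,y)\in E_U^+$ and $(x,y)\preceq(x',y')$, maximality gives $x'=x$ and $y'=y$.

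For (iv), the support of $\Delta_U^+$ is an upper set because $\Delta_U^+$ is one. If some $x$ lay in both supports, with $U(x,y)=s_U$ and $U(x,z)=e$, then $y=U(U(y,x),z)=U(s_U,z)=s_U$ by the forced value, which is impossible. Incomparability then follows from the upper-set property combined with maximality, as in the max case.

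For (v), commutativity and monotonicity of $R_U^+$ are inherited from $U$, and the range inclusion follows from the partition. For (a), $U(x,R_U^+(y,z))=U(s_U,z)=s_U$. For (b), $U(x,R_U^+(y,z))=U(e,z)=z\in\widehat A_e$. Item (c) follows from associativity and the fact that the three regions are defined by which of $\widehat A_e$, $\{e\}$, $B_e$ contains the value. For (d), $R_U^+(x,y)\unlhd U(p,q)=e$, and equality is excluded because $(x,y)\in\Omega_U^+$.
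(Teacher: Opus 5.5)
Your proposal is correct and takes essentially the paper's route: the paper states this theorem without proof as the dual of Theorem~\ref{thm:r-max-exceptional} and Proposition~\ref{prop:r-max-regular}, and each of your steps is the faithful dualization of those proofs (the identity $U(b,s_U)=b$, the shortest-path argument excluding $s_U\in I_e^2$, the partner and maximality arguments via $U(w,x)=x'$ versus $U(w,x)=w$, and the value-based partition for (v)). The only step you leave implicit is that $R_U^+(y,z)\in I_e^3$ in (v)(a); it follows from the localization in Proposition~\ref{prop:r-min-properties}(ii), just as the paper uses Proposition~\ref{prop:r-max-blocks} in the max case.
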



\begin{corollary}\label{cor:r-disjoint-lattice}
The classes $\widetilde{\mathcal{U}}_{\mathrm{max}}^{r}$ and
$\widetilde{\mathcal{U}}_{\mathrm{min}}^{r}$ are disjoint. If $\unlhd$ is transitive, then
\[
 A_e=[0,e[,\qquad B_e=]e,1],\qquad I_e^3=I_e,
\]
and all the preceding statements reduce to their bounded-lattice
counterparts.
\end{corollary}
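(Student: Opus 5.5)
The corollary has two parts, and I will prove them separately.

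\emph{Disjointness.} Suppose some $U$ lies in both $\widetilde{\mathcal{U}}_{\mathrm{max}}^{r}$ and $\widetilde{\mathcal{U}}_{\mathrm{min}}^{r}$. Take $a=0\in A_e$ and $b=1\in B_e$; these exist because $e\notin\{0,1\}$. Since $0\in A_e$ and $1\in\widehat B_e$, Definition~\ref{def:umaxr-uminr-trellis}(i) gives $U(0,1)=0$. Since $1\in B_e$ and $0\in\widehat A_e$, part (ii) gives $U(1,0)=1$. Commutativity then forces $0=1$, which contradicts the standing assumption that the bounded trellis has at least three elements. Alternatively, Proposition~\ref{prop:r-max-blocks}(ii) gives that $0$ is an annihilator, and Proposition~\ref{prop:r-min-properties}(i) gives that $1$ is an annihilator; this leads to the same contradiction.

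\emph{Transitive case: the regions.} Assume $\unlhd$ is transitive. Then $x\lesssim y$ is equivalent to $x\unlhd y$, since any finite chain collapses by induction on its length. Now let $x\in I_e$. We cannot have $x\lesssim e$, because that would give $x\unlhd e$ and contradict $x\parallel e$; likewise $e\lesssim x$ is impossible. Hence $I_e^1=I_e^2=\varnothing$ and $I_e^3=I_e$. Combining this with Lemma~\ref{lem:r-reachability}, we get $A_e=\{x\ne e: x\unlhd e\}=[0,e[$ and $B_e=]e,1]$. It follows that $\widehat A_e=[0,e[\cup I_e$ and $\widehat B_e=]e,1]\cup I_e$.

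\emph{Transitive case: the specialization.} Every element is middle-transitive. A non-trivial cycle would yield $x\unlhd y\unlhd x$ with $x\ne y$, which antisymmetry forbids, so every $e\in X\setminus\{0,1\}$ lies in $K$. Substituting these identifications turns Definition~\ref{def:umaxr-uminr-trellis} into the definition of the classes $\mathcal{U}_{\mathrm{max}}^{r}$ and $\mathcal{U}_{\mathrm{min}}^{r}$ of~\cite{HP21}, as already noted after that definition. The same substitution applies to the sets $\Delta_U^{\pm}$, $E_U^{\pm}$, $\Omega_U^{\pm}$ and to Propositions~\ref{prop:r-cone-closure}--\ref{prop:r-min-properties} and Theorems~\ref{thm:r-max-exceptional}, \ref{thm:r-min-exceptional}. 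After it, each of these statements reads verbatim as a bounded-lattice statement.

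The only potential obstacle is the phrase ``reduce to their bounded-lattice counterparts''. I read it as the observation that, once the regions are identified, the statements become literally the lattice ones. On that reading no further verification is needed, because each statement was already proved for an arbitrary bounded trellis, and a bounded lattice is a special case.
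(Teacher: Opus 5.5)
Your proposal is correct and follows essentially the paper's argument. Disjointness comes from the conflicting values $U(0,1)=0$ and $U(1,0)=1$: the paper cites Propositions~\ref{prop:r-max-blocks} and~\ref{prop:r-min-properties}, whereas you read these values off Definition~\ref{def:umaxr-uminr-trellis} directly. The transitive case comes from $I_e^1=I_e^2=\varnothing$ and $I_e^3=I_e$, and your extra details (collapsing paths, and every $e\in X\setminus\{0,1\}$ lying in $K$) match what the paper spells out later in Corollary~\ref{cor:four-class-lattice-specialization}.
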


\begin{proof}
For $U\in\widetilde{\mathcal{U}}_{\mathrm{max}}^{r}$,
Proposition~\ref{prop:r-max-blocks} gives $U(0,1)=0$, whereas for
$U\in\widetilde{\mathcal{U}}_{\mathrm{min}}^{r}$,
Proposition~\ref{prop:r-min-properties} gives $U(0,1)=1$. Since $0\ne1$, no
uninorm belongs to both classes. If $\unlhd$ is transitive, then
$I_e^1=I_e^2=\varnothing$ and $I_e^3=I_e$. Hence, the stated
specialization follows immediately.

\end{proof}

\subsection{Decompositions for
\texorpdfstring{$\widetilde{\mathcal{U}}_{\mathrm{max}}^{r}$ and
$\widetilde{\mathcal{U}}_{\mathrm{min}}^{r}$}
{Umax-r and Umin-r}}

The preceding notation gives a decomposition of every operation in the
two classes. The following proposition lists the components uniquely
determined by a given uninorm; the converse is proved in
Section~\ref{sec:r-representation}.

\begin{proposition}\label{prop:r-canonical-decomposition}
Let $U\in\widetilde{\mathcal{U}}_{\mathrm{max}}^{r}$ and set
$
 T_U=U|_{(A_e\cup\{e\})\times(A_e\cup\{e\})}.
$
Then
\begin{equation}\label{eq:r-max-canonical}
U(x,y)=
\begin{cases}
T_U(x,y),
 & (x,y)\in(A_e\cup\{e\})\times(A_e\cup\{e\}),\\
x,
 & (x,y)\in\widehat B_e\times\{e\}
       \ \cup\ A_e\times\widehat B_e,\\
y,
 & (x,y)\in\{e\}\times\widehat B_e
       \ \cup\ \widehat B_e\times A_e,\\
g_U,
 & (x,y)\in\Delta_U^-,\\
e,
 & (x,y)\in E_U^-,\\
R_U^-(x,y),
 & (x,y)\in\Omega_U^-.
\end{cases}
\end{equation}
The fourth line is omitted when $\Delta_U^-=\varnothing$.  The set $A_e\cup\{e\}$ is closed under $T_U$, and $T_U$ is commutative,
associative, and increasing, with neutral element $e$. If
$\Delta_U^-\ne\varnothing$, then $T_U(a,g_U)=a$ for every $a\in A_e$.

If $U\in\widetilde{\mathcal{U}}_{\mathrm{min}}^{r}$ and
$S_U=U|_{(B_e\cup\{e\})\times(B_e\cup\{e\})}$, then
\begin{equation}\label{eq:r-min-canonical}
U(x,y)=
\begin{cases}
S_U(x,y),
 & (x,y)\in(B_e\cup\{e\})\times(B_e\cup\{e\}),\\
x,
 & (x,y)\in\widehat A_e\times\{e\}
       \ \cup\ B_e\times\widehat A_e,\\
y,
 & (x,y)\in\{e\}\times\widehat A_e
       \ \cup\ \widehat A_e\times B_e,\\
s_U,
 & (x,y)\in\Delta_U^+,\\
e,
 & (x,y)\in E_U^+,\\
R_U^+(x,y),
 & (x,y)\in\Omega_U^+.
\end{cases}
\end{equation}
Here the fourth line is omitted when $\Delta_U^+=\varnothing$.  The set $B_e\cup\{e\}$ is closed under $S_U$, and $S_U$ is commutative,
associative, and increasing, with neutral element $e$. If
$\Delta_U^+\ne\varnothing$, then $S_U(b,s_U)=b$ for every $b\in B_e$.
\end{proposition}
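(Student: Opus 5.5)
I would prove the statement for $\widetilde{\mathcal{U}}_{\mathrm{max}}^{r}$ in detail and obtain the $\widetilde{\mathcal{U}}_{\mathrm{min}}^{r}$ case by the order-dual argument, using Proposition~\ref{prop:r-min-properties} and Theorem~\ref{thm:r-min-exceptional} in place of Proposition~\ref{prop:r-max-blocks} and Theorem~\ref{thm:r-max-exceptional}. The plan is first to check that the six regions in \eqref{eq:r-max-canonical} cover $X\times X$ without conflict, and then to verify, region by region, that $U$ takes the listed value.

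For the covering, I would use the disjoint decomposition $X=A_e\cup\{e\}\cup\widehat B_e$ from Lemma~\ref{lem:top-block-closure}. This splits $X\times X$ into nine blocks.

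\begin{itemize}
\item The four blocks inside $(A_e\cup\{e\})\times(A_e\cup\{e\})$ form the first line.
\item The blocks $\widehat B_e\times\{e\}$ and $\{e\}\times\widehat B_e$, together with the two mixed blocks $A_e\times\widehat B_e$ and $\widehat B_e\times A_e$, form lines two and three.
\item The block $\widehat B_e\times\widehat B_e$ is split into $\Delta_U^-\cup E_U^-\cup\Omega_U^-$, by the definition of $\Omega_U^-$ and the disjointness $\Delta_U^-\cap E_U^-=\varnothing$ from Theorem~\ref{thm:r-max-exceptional}(i).
\end{itemize}

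Lines two and three overlap only at points where both formulas give the same value. This uses neutrality for pairs involving $e$. For the mixed blocks, it uses Definition~\ref{def:umaxr-uminr-trellis} together with commutativity.

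Next I would verify the values on each region.

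\begin{itemize}
\item On the first region the value is $T_U$ by definition.
\item On the neutral blocks the value follows from $U(x,e)=x$.
\item On $A_e\times\widehat B_e$ the value is $x$ by the defining identity of the class, and on $\widehat B_e\times A_e$ it is $y$ by commutativity.
\item On $\Delta_U^-$ the value is the common constant $g_U$ from Theorem~\ref{thm:r-max-exceptional}(ii).
\item On $E_U^-$ the value is $e$ by definition.
\item On $\Omega_U^-$ the value is $R_U^-$ by definition.
\end{itemize}

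The fourth line must be omitted when $\Delta_U^-=\varnothing$ only because $g_U$ is undefined in that case.

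For the properties of $T_U$, closure of $A_e\cup\{e\}$ follows from $U(A_e\times A_e)\subseteq A_e$ (Proposition~\ref{prop:r-cone-closure}) together with neutrality. Commutativity, associativity, monotonicity and the neutral element $e$ are then inherited by restriction. Associativity needs the closure, so that inner values stay in the domain. Finally, when $\Delta_U^-\ne\varnothing$, the identity $T_U(a,g_U)=a$ for $a\in A_e$ is exactly Theorem~\ref{thm:r-max-exceptional}(ii), since $g_U\in[0,e[\subseteq A_e$.

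The only delicate point is the bookkeeping that the regions genuinely partition $X\times X$. In particular, $\Delta_U^-$ and $E_U^-$ lie inside $I_e^3\times I_e^3\subseteq\widehat B_e\times\widehat B_e$, so they never meet the mixed or neutral blocks. Everything else is direct citation of earlier results. In the dual case, the needed facts are Proposition~\ref{prop:r-min-properties}(i)--(ii) and Theorem~\ref{thm:r-min-exceptional}(i)--(ii), the latter giving $s_U\in]e,1]$ and $U(b,s_U)=b$.
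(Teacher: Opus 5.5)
Your proposal is correct and follows essentially the same route as the paper. That route is: closure via Proposition~\ref{prop:r-cone-closure} plus neutrality, inheritance of the algebraic properties by restriction, the mixed lines from neutrality, commutativity and Definition~\ref{def:umaxr-uminr-trellis}, the $\Delta_U^-/E_U^-/\Omega_U^-$ split of $\widehat B_e\times\widehat B_e$, and $T_U(a,g_U)=a$ from Theorem~\ref{thm:r-max-exceptional}, with the dual case handled symmetrically. One small point: the regions of lines two and three are in fact pairwise disjoint, since $e\notin A_e\cup\widehat B_e$ and $A_e\cap\widehat B_e=\varnothing$, so no agreement-on-overlap check is needed there.
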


\begin{proof}
Closure follows from Proposition~\ref{prop:r-cone-closure}; commutativity,
associativity, monotonicity, and neutrality are inherited from $U$.
The mixed lines follow from neutrality,
commutativity, and Definition~\ref{def:umaxr-uminr-trellis}. The last three
lines in each formula partition the corresponding square according to whether
the value is $g_U$ or $s_U$, equals $e$, or is given by $R_U^-$ or $R_U^+$.
The identities
 $
T_U(a,g_U)=a$ for $a\in A_e
$
and
 $
S_U(b,s_U)=b$ for $b\in B_e$
follow from Theorems~\ref{thm:r-max-exceptional} and
\ref{thm:r-min-exceptional}, respectively.

\end{proof}

The preceding results determine the correct parameters of a representation.
For $\widetilde{\mathcal{U}}_{\mathrm{max}}^{r}$, these parameters are the lower
 component $U|_{(A_e\cup\{e\})\times(A_e\cup\{e\})}$, which takes values in
 $A_e\cup\{e\}$,
 the forced projection  on
$A_e\times(\{e\}\cup\widehat B_e)$, the sets
$\Delta_U^-$ and $E_U^-$ inside $I_e^3\times I_e^3$, and the partial operation
$R_U^-$ on $\Omega_U^-$. If $\Delta_U^-\ne\varnothing$, its value is not a
 free parameter:  it is the greatest element $g$ of $A_e$, characterized by  $g=e\wedge c$ for every $c\in I_e^3$.
Likewise, a
representation of $\widetilde{\mathcal{U}}_{\mathrm{min}}^{r}$ must use
 $U|_{(B_e\cup\{e\})\times(B_e\cup\{e\})}$, the  forced projection  on
$B_e\times(\{e\}\cup\widehat A_e)$, the sets
$\Delta_U^+$ and $E_U^+$ inside $I_e^3\times I_e^3$, and the partial operation
$R_U^+$ on $\Omega_U^+$; the common value on $\Delta_U^+$, when this set is
nonempty, is $s=e\vee c$ for every $c\in I_e^3$.

Thus, for
$U\in\widetilde{\mathcal{U}}_{\mathrm{max}}^{r}$
(resp.\
$U\in\widetilde{\mathcal{U}}_{\mathrm{min}}^{r}$), every pair
$(x,y)\in\widehat B_e\times\widehat B_e$
(resp.\
$(x,y)\in\widehat A_e\times\widehat A_e$)
such that $U(x,y)\notin\widehat B_e$
(resp.\ $U(x,y)\notin\widehat A_e$)
necessarily belongs to $I_e^3\times I_e^3$, rather than merely to
$I_e\times I_e$.
The elements of $I_e^1$ and $I_e^2$
already belong to $A_e$ and $B_e$, respectively. Any converse construction
must impose,
in addition to commutativity and monotonicity of the component operations,
the corresponding associativity conditions in
Proposition~\ref{prop:r-max-regular} and
Theorem~\ref{thm:r-min-exceptional}. They cover the associativity cases
involving more than one of the sets $\Delta_U^\pm$, $E_U^\pm$, and
$\Omega_U^\pm$.
Section~\ref{sec:r-representation} proves that the complete collection is
also sufficient.

\subsection{Representations for
\texorpdfstring{$\widetilde{\mathcal{U}}_{\mathrm{max}}^{r}$ and
$\widetilde{\mathcal{U}}_{\mathrm{min}}^{r}$}
{Umax-r and Umin-r}}

\label{sec:r-representation}


Throughout this section, let
$\mathbb{P}=(X,\unlhd,\wedge,\vee,0,1)$ be a bounded trellis and let
$e\in K\setminus\{0,1\}$.
We use the decomposition in
Proposition~\ref{prop:r-canonical-decomposition} to obtain piecewise
representations whose components are uniquely determined. For an operation
defined by cases, monotonicity must be verified both when two comparable
input pairs fall under the same case and when they fall under different
cases. The following definition records the latter comparisons. It does not
weaken monotonicity: provided that the map defining each individual case is
increasing on its domain, this condition is equivalent to monotonicity of
the resulting operation.

\begin{definition}\label{def:regional-order-compatibility}
Let $\mathscr P_0,\ldots,\mathscr P_m$ be a disjoint partition of $X\times X$ and
let $V_i:\mathscr P_i\to X$. The family
$\{(\mathscr P_i,V_i)\}_{i=0}^m$ is called \emph{regionally order-compatible}
if
\[
 \boldsymbol{x}\in\mathscr P_i,
 \quad \boldsymbol{y}\in\mathscr P_j,
 \quad \boldsymbol{x}\preceq\boldsymbol{y}
 \quad\Rightarrow\quad
 V_i(\boldsymbol{x})\unlhd V_j(\boldsymbol{y})
\]
whenever $i\ne j$. If every $V_i$ is increasing on $\mathscr P_i$,
regional order compatibility is equivalent to the map $V:X\times X\to X$ defined
by $V|_{\mathscr P_i}=V_i$ being increasing.
\end{definition}

\begin{lemma}\label{lem:r-local-cone-closure}
Let $e\in K\setminus\{0,1\}$. If
$
 H_-:(A_e\cup\{e\})\times(A_e\cup\{e\})\to A_e\cup\{e\}
$
is increasing and has neutral element $e$, then
$H_-(A_e\times A_e)\subseteq A_e$. Similarly, if
$
 H_+:(B_e\cup\{e\})\times(B_e\cup\{e\})\to B_e\cup\{e\}
$
is increasing and has neutral element $e$, then
$H_+(B_e\times B_e)\subseteq B_e$.
\end{lemma}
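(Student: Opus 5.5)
The argument mirrors the proof of \eqref{eq:top-lower-block-closure} in Theorem~\ref{thm:top-construction}. The one difference is that $H_-$ is only defined on $(A_e\cup\{e\})\times(A_e\cup\{e\})$, so every path we use must stay inside this set. We treat $H_-$ first and obtain $H_+$ by duality.

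Fix $x,y\in A_e$. By Lemma~\ref{lem:r-reachability}, $y\lesssim e$, so there is a path $y=y_0\unlhd y_1\unlhd\cdots\unlhd y_n=e$. We first show that every $y_i$ lies in $A_e\cup\{e\}$. Each $y_i$ satisfies $y_i\lesssim e$, so $y_i$ cannot belong to $\widehat B_e$. For $y_i\in I_e^3$ this contradicts the definition of $I_e^3$. For $y_i\in B_e$ it would give $e\lesssim y_i\lesssim e$ with $y_i\ne e$, which is a non-trivial cycle through $e$, contradicting $e\in K$. Since $X=A_e\cup\{e\}\cup\widehat B_e$ is a partition by Lemma~\ref{lem:top-block-closure}, every $y_i\in A_e\cup\{e\}$. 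Consequently each pair $(x,y_i)$ lies in the domain of $H_-$, and $(x,y_i)\preceq(x,y_{i+1})$.

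Joint monotonicity of $H_-$ then gives the chain
\[
H_-(x,y)=H_-(x,y_0)\unlhd H_-(x,y_1)\unlhd\cdots\unlhd H_-(x,e)=x .
\]
Hence $H_-(x,y)\lesssim x\lesssim e$. The value $H_-(x,y)$ lies in $A_e\cup\{e\}$ because that is the codomain of $H_-$. Suppose $H_-(x,y)=e$. Then $e\lesssim x\lesssim e$ with $x\ne e$, which again places $e$ in a non-trivial cycle, a contradiction. Therefore $H_-(x,y)\in A_e$.

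For $H_+$, take $x,y\in B_e$ and a path from $e$ to $y$. Its vertices $z$ satisfy $e\lesssim z$. By Lemma~\ref{lem:bottom-block-closure} and the same cycle argument they lie in $B_e\cup\{e\}$: an element of $I_e^3$ would violate $e\not\lesssim z$, and an element of $A_e$ would create a cycle. Monotonicity along the path gives $x=H_+(x,e)\lesssim H_+(x,y)$, so $e\lesssim H_+(x,y)$. The value $H_+(x,y)=e$ is excluded by the cycle argument, so $H_+(x,y)\in B_e$.

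The main point needing care is the first step: showing that the intermediate vertices of the path stay in the domain of $H_-$ (respectively $H_+$). Without this, monotonicity of the partial operation cannot be applied along the path. The exclusion of $\widehat B_e$ (respectively $\widehat A_e$) handles it, using only $e\in K$ and the definition of $I_e^3$.
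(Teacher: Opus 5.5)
Your proof is correct and follows the paper's route. You take a directed path from $y$ to $e$ (respectively, from $e$ to $y$) whose vertices stay in $A_e\cup\{e\}$ (respectively, $B_e\cup\{e\}$), apply monotonicity along it, and exclude the value $e$ by the non-trivial-cycle argument; you also justify why the path vertices stay in the domain of $H_\pm$, which the paper only asserts.
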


\begin{proof}
Let $x,y\in A_e$ and choose a directed path
$y=y_0\unlhd\cdots\unlhd y_n=e$. Its vertices belong to
$A_e\cup\{e\}$, and monotonicity along the path gives
$H_-(x,y)\lesssim H_-(x,e)=x$. If $H_-(x,y)=e$, then
$e\lesssim x\lesssim e$, producing a non-trivial cycle containing $e$.
The codomain of $H_-$ therefore gives $H_-(x,y)\in A_e$.

For $x,y\in B_e$, choose a directed path
$e=y_0\unlhd\cdots\unlhd y_n=y$. Its vertices belong to
$B_e\cup\{e\}$, and $x=H_+(x,e)\lesssim H_+(x,y)$. The equality
$H_+(x,y)=e$ would again put $e$ in a non-trivial cycle. Hence
$H_+(x,y)\in B_e$.
\end{proof}

\subsubsection{Representation of
\texorpdfstring{$\widetilde{\mathcal{U}}_{\mathrm{max}}^{r}$}{Umax-r}}

For symmetric sets $D,E\subseteq I_e^3\times I_e^3$, put
$
 \Omega^-_{D,E}=(\widehat B_e\times\widehat B_e)\setminus(D\cup E).
$
Let
\[
 T:(A_e\cup\{e\})\times(A_e\cup\{e\})\to A_e\cup\{e\},
 \qquad
 R:\Omega^-_{D,E}\to \widehat B_e
\]
be maps. We call $(T,D,E,R)$ an \emph{admissible
$\widetilde{\mathcal{U}}_{\mathrm{max}}^{r}$-datum} if the following conditions hold.
\begin{enumerate}[label=\textup{(Max\arabic*)}, itemsep=3pt]
 \item\label{cond:max-data-T}
 $T$ is commutative, associative, and increasing and has neutral element $e$.
 If $D\ne\varnothing$, then $A_e$ has a greatest element $g\in[0,e[$
 satisfying $T(a,g)=a$ for $a\in A_e$.

 \item\label{cond:max-data-layers}
 $D$ and $E$ are symmetric lower sets in $(I_e^3\times I_e^3,\preceq)$, $E$
 satisfies the unique-partner condition, and their supports are disjoint:
$
 \operatorname{supp}(D)\cap\operatorname{supp}(E)=\varnothing.
$

 \item\label{cond:max-data-core}
 Under Conditions~\ref{cond:max-data-T} and
 \ref{cond:max-data-layers}, define
 \begin{equation}\label{eq:r-upper-core-map}
  \Phi_{D,E,R}(x,y)=
  \begin{cases}
   g,      & (x,y)\in D,\\
   e,      & (x,y)\in E,\\
   R(x,y), & (x,y)\in\Omega^-_{D,E}.
  \end{cases}
 \end{equation}
 The first line is omitted when $D=\varnothing$. The map is well-defined by
 the disjoint-support condition, and $g$ exists when needed by
 Condition~\ref{cond:max-data-T}. The operation $R$ is commutative, and
 $\Phi_{D,E,R}$ is increasing on $\widehat B_e\times\widehat B_e$.

 \item\label{cond:max-data-assoc}
 For all $x,y,z\in \widehat B_e$, the following associativity conditions hold
 whenever the displayed pairs belong to the indicated domains:
 \begin{enumerate}[label=\textup{(\alph*)}, itemsep=2pt]
  \item If $(x,y)\in D$ and $(y,z)\in\Omega^-_{D,E}$, then
        $
        (x,R(y,z))\in D.
        $
  \item If $(x,y)\in E$ and $(y,z)\in\Omega^-_{D,E}$, then
        \[
        (x,R(y,z))\in\Omega^-_{D,E},
        \qquad R(x,R(y,z))=z.
        \]
  \item If $(x,y),(y,z)\in\Omega^-_{D,E}$, then, for
        $\Lambda\in\{D,E\}$,
        \[
        (R(x,y),z)\in\Lambda
        \quad\Leftrightarrow\quad
        (x,R(y,z))\in\Lambda.
        \]
        If $(R(x,y),z),(x,R(y,z))\in\Omega^-_{D,E}$, then
        \[
        R(R(x,y),z)=R(x,R(y,z)).
        \]
 \end{enumerate}

 \item\label{cond:max-data-cross-order}
 Define the four pairwise disjoint regions
 \begin{align*}
 \mathscr P_0^-&=(A_e\cup\{e\})\times(A_e\cup\{e\}),\\
 \mathscr P_1^-&=(\widehat B_e\times\{e\})\cup(A_e\times \widehat B_e),\\
 \mathscr P_2^-&=(\{e\}\times \widehat B_e)\cup(\widehat B_e\times A_e),\\
 \mathscr P_3^-&=\widehat B_e\times\widehat B_e,
 \end{align*}
 and the maps
 \[
 V_0^-=T,\qquad
 V_1^-(x,y)=x,\qquad
 V_2^-(x,y)=y,\qquad
 V_3^-=\Phi_{D,E,R}.
 \]
 The family $\{(\mathscr P_i^-,V_i^-)\}_{i=0}^3$ is regionally
 order-compatible.
\end{enumerate}



The admissibility conditions also imply
 $
T(A_e\times A_e)\subseteq A_e.
$
Moreover, if $D\ne\varnothing$, then
$
g=e\wedge c
\ \text{for every }c\in I_e^3.
$
Indeed, the first inclusion follows directly from
Lemma~\ref{lem:r-local-cone-closure}. To prove the second assertion,
suppose that $D\ne\varnothing$ and fix $c\in I_e^3$. Then
\[
(g,c)\preceq(e,c),\qquad
V_1^-(g,c)=g,\qquad
V_2^-(e,c)=c.
\]
Condition~\ref{cond:max-data-cross-order} therefore gives $g\unlhd c$.
If $z$ is any lower bound of $e$ and $c$, then $z=e$ is impossible because
$c\parallel e$; hence $z\in[0,e[\subseteq A_e$, and
\rev{since $g$ is the greatest element of $A_e$, we have $z\unlhd g$.}
Consequently,
\[
 g=e\wedge c\qquad(c\in I_e^3).
\]
Thus neither closure nor the meet identity is an independent admissibility
requirement.

The conditions on $D$ and $E$ retain all the support properties found in
Theorem~\ref{thm:r-max-exceptional}. Because $E$ is a lower set and partners
are unique, every element of $\operatorname{supp}(E)$ is minimal in
$I_e^3$; consequently, $\operatorname{supp}(E)$ is an antichain.
Moreover, $\operatorname{supp}(D)$ is a lower set. If
$p\in\operatorname{supp}(E)$ and $q\in\operatorname{supp}(D)$ were
comparable, then $p\unlhd q$ would imply
$p\in\operatorname{supp}(D)$, whereas $q\unlhd p$ would imply $q=p$ by
the minimality of $p$. Either conclusion contradicts
$\operatorname{supp}(D)\cap\operatorname{supp}(E)=\varnothing$; hence the
two supports are elementwise incomparable.{} Similarly, monotonicity of
$\Phi_{D,E,R}$ implies monotonicity of its restriction $R$.

 Condition~\ref{cond:max-data-cross-order} concerns only comparable
pairs belonging to different regions. The maps $V_0^-=T$ and
$V_3^-=\Phi_{D,E,R}$ are increasing by
Conditions~\ref{cond:max-data-T} and~\ref{cond:max-data-core}, whereas
$V_1^-$ and $V_2^-$ are coordinate projections.

The next theorem is an exact and uniquely recoverable piecewise
representation. Its forward implication identifies the datum forced by a
given uninorm, whereas its converse proves that every admissible datum
reconstructs a member of the class.

\begin{theorem}[Representation of
$\widetilde{\mathcal{U}}_{\mathrm{max}}^{r}$]
\label{thm:r-max-representation}
Let $\mathbb{P}=(X,\unlhd,\wedge,\vee,0,1)$ be a bounded trellis,
let $e\in K\setminus\{0,1\}$, and let $U:X\times X\to X$ be a binary operation.
Then
$U\in\widetilde{\mathcal{U}}_{\mathrm{max}}^{r}$ if and only if there exists an
admissible $\widetilde{\mathcal{U}}_{\mathrm{max}}^{r}$-datum $(T,D,E,R)$ such that
\begin{equation}\label{eq:r-max-representation}
U(x,y)=
\begin{cases}
T(x,y),
 & (x,y)\in(A_e\cup\{e\})\times(A_e\cup\{e\}),\\
x,
 & (x,y)\in(\widehat B_e\times\{e\})
       \ \cup\ (A_e\times \widehat B_e),\\
y,
 & (x,y)\in(\{e\}\times \widehat B_e)
       \ \cup\ (\widehat B_e\times A_e),\\
g,
 & (x,y)\in D,\\
e,
 & (x,y)\in E,\\
R(x,y),
 & (x,y)\in\Omega^-_{D,E}.
\end{cases}
\end{equation}
As above, the fourth line is omitted when $D=\varnothing$.
The representing datum is unique and is recovered from $U$ as
\[
 T=U|_{(A_e\cup\{e\})\times(A_e\cup\{e\})},\qquad
 D=\Delta_U^-,\qquad E=E_U^-,\qquad R=R_U^-.
\]
\end{theorem}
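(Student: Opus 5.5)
The proof has three parts: necessity, sufficiency, and uniqueness.

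\emph{Necessity.} Let $U\in\widetilde{\mathcal{U}}_{\mathrm{max}}^{r}$ and set $T=T_U$, $D=\Delta_U^-$, $E=E_U^-$, $R=R_U^-$. Formula~\eqref{eq:r-max-representation} is then exactly the decomposition~\eqref{eq:r-max-canonical} of Proposition~\ref{prop:r-canonical-decomposition}, with $g=g_U$.
\begin{itemize}
\item \ref{cond:max-data-T} follows from that proposition together with Theorem~\ref{thm:r-max-exceptional}(ii), which shows that $g_U\in[0,e[$ is the greatest element of $A_e$.
\item \ref{cond:max-data-layers} follows from Theorem~\ref{thm:r-max-exceptional}(i),(iii),(iv).
\item \ref{cond:max-data-core}: the map $\Phi_{D,E,R}$ is just $U|_{\widehat B_e\times\widehat B_e}$, so it is increasing, and $R$ is commutative.
\item \ref{cond:max-data-assoc} is Proposition~\ref{prop:r-max-regular}(i)--(iii).
\item \ref{cond:max-data-cross-order} holds because $U$ itself is increasing and each $V_i^-$ is a restriction of $U$.
\end{itemize}

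\emph{Sufficiency.} Let $(T,D,E,R)$ be admissible and define $U$ by~\eqref{eq:r-max-representation}. The regions $\mathscr P_0^-,\dots,\mathscr P_3^-$ partition $X\times X$, because $X=A_e\cup\{e\}\cup\widehat B_e$ is a disjoint union (Lemma~\ref{lem:top-block-closure}). Hence $U$ is well defined.
\begin{itemize}
\item Commutativity follows from the symmetry of $T$, $D$, $E$ and $R$, and from the interchange of $\mathscr P_1^-$ with $\mathscr P_2^-$.
\item Neutrality of $e$ is immediate.
\item Monotonicity follows from Definition~\ref{def:regional-order-compatibility}: each $V_i^-$ is increasing on its region ($T$ by \ref{cond:max-data-T}, $\Phi_{D,E,R}$ by \ref{cond:max-data-core}, and the projections trivially), and \ref{cond:max-data-cross-order} supplies regional compatibility.
\item The membership $U\in\widetilde{\mathcal{U}}_{\mathrm{max}}^{r}$ is read off from the line $x$ on $A_e\times\widehat B_e$.
\end{itemize}

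\emph{Associativity} is the real work. I would dispose of any triple containing $e$ by neutrality. For triples in $A_e^3$ I use associativity of $T$ together with $T(A_e\times A_e)\subseteq A_e$ (Lemma~\ref{lem:r-local-cone-closure}). When at least one argument lies in $A_e$ and another in $\widehat B_e$, the projection rule makes $U(a,\cdot)=a$ on $\widehat B_e$. The cases are then as follows.
\begin{itemize}
\item $a,b\in A_e$ and $z\in\widehat B_e$: both sides equal $T(a,b)$.
\item $a\in A_e$ and $y,z\in\widehat B_e$: the left side is $U(a,\Phi(y,z))$ and the right side is $a$. When $\Phi(y,z)\in\widehat B_e\cup\{e\}$ the left side is $a$. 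When $(y,z)\in D$ the left side is $T(a,g)=a$ by \ref{cond:max-data-T}.
\item $y\in A_e$ with $x,z\in\widehat B_e$, or $z\in A_e$ with $x,y\in\widehat B_e$: these reduce the same way, using $T(g,y)=y$ in the $D$-case.
\item $x,y,z\in\widehat B_e$: I split according to the region in which $(x,y)$ and $(y,z)$ lie among $D$, $E$, $\Omega^-_{D,E}$.
\begin{itemize}
\item Both pairs in $D$: both sides are $U(x,g)=g=U(g,z)$.
\item $(x,y)\in D$ and $(y,z)\in\Omega^-_{D,E}$: condition (Max4)(a) gives $(x,R(y,z))\in D$, so both sides are $g$.
\item $(x,y)\in E$: the right side is $U(e,z)=z$. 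If $(y,z)\in\Omega^-_{D,E}$, the left side is $R(x,R(y,z))=z$ by (Max4)(b).
\item Both pairs in $\Omega^-_{D,E}$: this is (Max4)(c).
\end{itemize}
\end{itemize}

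The main obstacle is the set of mixed subcases not literally listed in \ref{cond:max-data-assoc}:
\begin{itemize}
\item $(x,y)\in D$ with $(y,z)\in E$;
\item $(x,y)\in E$ with $(y,z)\in E$.
\end{itemize}
For the first, disjointness of supports in \ref{cond:max-data-layers} forbids $y$ from lying in both $\operatorname{supp}(D)$ and $\operatorname{supp}(E)$, so this subcase is vacuous. For the second, the unique-partner condition forces $x=z$, and both sides equal $U(e,x)=x$. I expect the bookkeeping of these subcases, and of the symmetric ones obtained by swapping roles, to be the delicate part; this is where the disjoint-support and unique-partner hypotheses are consumed.

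\emph{Uniqueness.} $T$ is the restriction of $U$ to $\mathscr P_0^-$. Since $g\in A_e$, $e$ and $\widehat B_e$ are pairwise disjoint, $D$, $E$ and $\Omega^-_{D,E}$ are recovered from~\eqref{eq:r-max-representation} as the preimages in $\widehat B_e\times\widehat B_e$ of $A_e$, $\{e\}$ and $\widehat B_e$. This recovery uses $\operatorname{Ran}R\subseteq\widehat B_e$ and the fact that $D,E\subseteq I_e^3\times I_e^3$. Hence $D=\Delta_U^-$, $E=E_U^-$ and $R=R_U^-$.
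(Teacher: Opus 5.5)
Your proposal is correct and follows essentially the same route as the paper. Necessity reads off $(T_U,\Delta_U^-,E_U^-,R_U^-)$ from Proposition~\ref{prop:r-canonical-decomposition}, Theorem~\ref{thm:r-max-exceptional} and Proposition~\ref{prop:r-max-regular}; sufficiency uses regional order compatibility for monotonicity and the same case split for associativity, with the $D$--$E$ case vacuous by disjoint supports and the $E$--$E$ case settled by the unique-partner condition; uniqueness comes from the disjoint value ranges $A_e$, $\{e\}$, $\widehat B_e$ on $D$, $E$, $\Omega^-_{D,E}$.

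The only slip is cosmetic. In the case $z\in A_e$ with $x,y\in\widehat B_e$, the identity you need is $T(g,z)=z$. When the $A_e$-element sits in the middle, no $D$-pair arises at all.
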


\begin{proof}
Suppose first that $U\in\widetilde{\mathcal{U}}_{\mathrm{max}}^{r}$. Take
\(T=U|_{(A_e\cup\{e\})\times(A_e\cup\{e\})}\),
\(D=\Delta_U^-\), \(E=E_U^-\), and \(R=R_U^-\).
Proposition~\ref{prop:r-cone-closure},
Theorem~\ref{thm:r-max-exceptional}, and
Proposition~\ref{prop:r-max-regular} give
the required properties in
Conditions~\ref{cond:max-data-T}--\ref{cond:max-data-assoc}; moreover,
$\Phi_{D,E,R}=U|_{\widehat B_e\times\widehat B_e}$ is increasing because $U$ is
increasing. In particular, if $D\ne\varnothing$, the common value of $U$ on
$D$ is the greatest element $g$ of $A_e$, where $g=e\wedge c$ for every $c\in I_e^3$,
and $T(a,g)=a$ for every $a\in A_e$.
The six lines of~\eqref{eq:r-max-representation} reproduce the decomposition
in Proposition~\ref{prop:r-canonical-decomposition}. Since $U$
is increasing, its restrictions to the four regions are regionally
order-compatible.
Thus Condition~\ref{cond:max-data-cross-order} also holds.

Conversely, suppose that an admissible datum is given. The four regions in
Condition~\ref{cond:max-data-cross-order} form a disjoint partition of $X\times X$;
inside the last region, $D,E,\Omega^-_{D,E}$ form a disjoint partition. Hence
formula~\eqref{eq:r-max-representation} defines $U$ unambiguously.

The symmetry of $D,E,\Omega^-_{D,E}$, together with the commutativity of
$T$ and $R$, shows that $U$ is commutative.
Condition~\ref{cond:max-data-T} and the two
mixed lines in~\eqref{eq:r-max-representation} show that $e$ is neutral.
Each $V_i^-$ is increasing on $\mathscr P_i^-$. Regional order
compatibility therefore implies, by
Definition~\ref{def:regional-order-compatibility}, that $U$ is increasing on
$X\times X$.

It remains to verify associativity. Neutrality deals with every triple
containing $e$, so let $x,y,z\in A_e\cup \widehat B_e$. If all three elements belong
to $A_e$, associativity follows from that of $T$. If $a,b\in A_e$ and
$h\in \widehat B_e$, then
\[
 U(U(a,b),h)=U(T(a,b),h)=T(a,b)
              =U(a,U(b,h)).
\]
Here $T(a,b)\in A_e$ follows from
Lemma~\ref{lem:r-local-cone-closure}. More explicitly, the three possible products
formed first from a pair of elements satisfy
\[
 U(T(a,b),h)=U(U(a,h),b)=U(U(b,h),a)=T(a,b),
\]
so every placement of $h$ is covered.

Next let $a\in A_e$ and $h,k\in \widehat B_e$. Since $U(a,h)=a$,
\(U(U(a,h),k)=a\).
\par\begin{samepage}
Put $v=U(h,k)$. If $(h,k)\in\Omega^-_{D,E}$, then
$v=R(h,k)\in \widehat B_e$ and $U(a,v)=a$. If $(h,k)\in E$, then $v=e$
and again $U(a,v)=a$. Finally, if $(h,k)\in D$, then $v=g$ and
$U(a,v)=T(a,g)=a$ by Condition~\ref{cond:max-data-T}. Thus associativity
also holds for triples having exactly one member in $A_e$:
$U(U(a,h),k)=U(U(a,k),h)=U(U(h,k),a)=a$.\par
\end{samepage}

It remains to consider $x,y,z\in \widehat B_e$. Classify each of $(x,y)$ and
$(y,z)$ as a member of $D$, $E$, or $\Omega^-_{D,E}$.

If both pairs belong to $D$, both sides of
\[
 U(U(x,y),z)=U(x,U(y,z))                                  \tag{\(*\)}
\]
equal $g$. If both belong to $E$, uniqueness of the $E$-partner of $y$ gives
$x=z$; the two sides of~$(*)$ are then $z$ and $x$, respectively, and hence
are equal. A pair in $D$ and a pair in $E$ cannot share the middle argument,
because the two supports are disjoint.

Assume that $(x,y)\in D$ and $(y,z)\in\Omega^-_{D,E}$. By
Condition~\ref{cond:max-data-assoc}\textup{(a)},
$(x,R(y,z))\in D$, so both sides of~$(*)$ equal $g$. The case
$(x,y)\in\Omega^-_{D,E}$ and $(y,z)\in D$ follows from the same condition after
reversing the variables and using symmetry and commutativity.

Assume that $(x,y)\in E$ and $(y,z)\in\Omega^-_{D,E}$. Then
Condition~\ref{cond:max-data-assoc}\textup{(b)} gives
\[
 U(x,U(y,z))=R(x,R(y,z))=z=U(e,z)=U(U(x,y),z).
\]
The case $(x,y)\in\Omega^-_{D,E}$ and $(y,z)\in E$ follows symmetrically.

Finally, suppose that $(x,y),(y,z)\in\Omega^-_{D,E}$. Put
\(p=(R(x,y),z)\) and \(q=(x,R(y,z))\).
 Condition~\ref{cond:max-data-assoc}\textup{(c)} gives
\[
 p\in D\Leftrightarrow q\in D,
 \qquad
 p\in E\Leftrightarrow q\in E.
\]
Thus, if either pair belongs to $D\cup E$, both belong to the same one of
$D$ and $E$, and the two sides of~$(*)$ are equal. If
$p,q\in\Omega^-_{D,E}$, then the same condition gives
$R(R(x,y),z)=R(x,R(y,z))$. This proves~$(*)$ in every case, and hence $U$ is
associative.

We have proved that $U$ is an increasing, associative, and commutative binary
operation with neutral element $e$. The second and third lines of
\eqref{eq:r-max-representation} give
\(U(a,h)=a\qquad(a\in A_e,\ h\in \widehat B_e)\),
so Definition~\ref{def:umaxr-uminr-trellis} yields
$U\in\widetilde{\mathcal{U}}_{\mathrm{max}}^{r}$. The ranges prescribed in the six
lines of~\eqref{eq:r-max-representation} identify $D$, $E$, and
$\Omega^-_{D,E}$ with $\Delta_U^-$, $E_U^-$, and $\Omega_U^-$,
respectively. The displayed restrictions then prove uniqueness.
\end{proof}

\begin{remark}\label{rem:r-max-representation-order}
Conditions~\ref{cond:max-data-assoc} and
\ref{cond:max-data-cross-order} are used, respectively, to verify
associativity and monotonicity. By
Conditions~\ref{cond:max-data-T} and~\ref{cond:max-data-core}, together with
the monotonicity of the coordinate projections, each $V_i^-$ is increasing
on $\mathscr P_i^-$. Regional order compatibility supplies precisely the
remaining comparisons required for $U$ to be increasing. For example, if
$x\in]0,e[$ and $y\in I_e^2\cup I_e^3$, comparison of $(y,x)$ with $(y,e)$
yields $x\unlhd y$. Since $x\in]0,e[$ and
$y\in I_e^2\cup I_e^3$, one has $x\ne y$, and hence $x\lhd y$. Thus it
also recovers the separation condition in
Proposition~\ref{prop:r-nonempty}. Example~\ref{ex:regional-compatibility}
shows that Conditions~\ref{cond:max-data-T}--\ref{cond:max-data-assoc}
alone do not imply these comparisons.

 Conditions~\ref{cond:max-data-assoc}\textup{(a)}--\textup{(c)} give the
remaining associativity cases on $\widehat B_e\times\widehat B_e\times\widehat B_e$. The $D$--$D$ cases
follow from the constant value $g$ on $D$, the $E$--$E$ cases from the
unique-partner property of $E$, and a pair in $D$ and a pair in $E$ cannot
share their middle argument because their supports are disjoint. Conditions
\textup{(a)}, \textup{(b)}, and \textup{(c)} handle, respectively, the
$D$--$\Omega$, $E$--$\Omega$, and $\Omega$--$\Omega$ cases, together with
their symmetric versions. They determine whether the nested pairs belong to
$D$, $E$, or $\Omega^-_{D,E}$ and, when both belong to
$\Omega^-_{D,E}$, require equality of the two $R$-values. This verifies
associativity on $\widehat B_e\times\widehat B_e\times\widehat B_e$; it does not assert that $R$ is an
everywhere-defined associative operation. For finite trellises,
Conditions~\ref{cond:max-data-assoc} and
\ref{cond:max-data-cross-order} require only finitely many checks, although
this need not be shorter than verifying associativity and monotonicity of
$U$ directly.
\end{remark}

\begin{corollary}[Construction with $D=E=\varnothing$]
\label{cor:r-max-regular-core-construction}
Let $T:(A_e\cup\{e\})\times(A_e\cup\{e\})\to A_e\cup\{e\}$ satisfy
Condition~\ref{cond:max-data-T} with $D=\varnothing$, and let
$R:\widehat B_e\times\widehat B_e\to\widehat B_e$ be commutative, associative, and
increasing. If the four maps specified in
Condition~\ref{cond:max-data-cross-order}, with
$\Phi_{\varnothing,\varnothing,R}=R$, are regionally order-compatible, then
\[
 U(x,y)=
 \begin{cases}
  T(x,y), &(x,y)\in(A_e\cup\{e\})\times(A_e\cup\{e\}),\\
  x, &(x,y)\in(\widehat B_e\times\{e\})
        \cup(A_e\times\widehat B_e),\\
  y, &(x,y)\in(\{e\}\times\widehat B_e)
        \cup(\widehat B_e\times A_e),\\
  R(x,y), &(x,y)\in\widehat B_e\times\widehat B_e
 \end{cases}
\]
belongs to $\widetilde{\mathcal{U}}_{\mathrm{max}}^{r}$. Conversely, every member of
$\widetilde{\mathcal{U}}_{\mathrm{max}}^{r}$ with
$\Delta_U^-=E_U^-=\varnothing$ has this form, uniquely, with
\[
 T=U|_{(A_e\cup\{e\})\times(A_e\cup\{e\})},\qquad
 R=U|_{\widehat B_e\times\widehat B_e}.
\]
In particular, if $I_e^3=\varnothing$, this
construction represents the whole class $\widetilde{\mathcal{U}}_{\mathrm{max}}^{r}$.
\end{corollary}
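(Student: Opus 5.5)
This corollary is the special case $D=E=\varnothing$ of Theorem~\ref{thm:r-max-representation}. The plan is to check that the hypotheses yield an admissible datum $(T,\varnothing,\varnothing,R)$, and conversely that the datum extracted from such a uninorm has exactly the stated form.

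\emph{Construction.} Take $D=E=\varnothing$. Then $\Omega^-_{\varnothing,\varnothing}=\widehat B_e\times\widehat B_e$ and $\Phi_{\varnothing,\varnothing,R}=R$.
\begin{itemize}
\item Condition~\ref{cond:max-data-T} holds by hypothesis. Its second clause is vacuous because $D=\varnothing$.
\item Condition~\ref{cond:max-data-layers} is trivial: the empty set is a symmetric lower set, satisfies the unique-partner condition, and has empty support.
\item Condition~\ref{cond:max-data-core} asks only that $R$ be commutative and that $\Phi=R$ be increasing, which are both assumed.
\item In Condition~\ref{cond:max-data-assoc}, parts (a) and (b) are vacuous. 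In part~(c), the membership equivalences for $\Lambda\in\{D,E\}$ hold trivially since both sides are false. Both nested pairs lie in $\Omega^-=\widehat B_e\times\widehat B_e$, because $R$ maps into $\widehat B_e$. The remaining requirement $R(R(x,y),z)=R(x,R(y,z))$ is exactly the associativity of $R$.
\item Condition~\ref{cond:max-data-cross-order} is the assumed regional order compatibility.
\end{itemize}
Formula~\eqref{eq:r-max-representation}, with its $D$ and $E$ lines empty, coincides with the displayed formula. Hence Theorem~\ref{thm:r-max-representation} gives $U\in\widetilde{\mathcal{U}}_{\mathrm{max}}^{r}$.

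\emph{Converse.} Let $U\in\widetilde{\mathcal{U}}_{\mathrm{max}}^{r}$ with $\Delta_U^-=E_U^-=\varnothing$. By Theorem~\ref{thm:r-max-representation}, the unique representing datum is $(T,\Delta_U^-,E_U^-,R_U^-)$ with $T=U|_{(A_e\cup\{e\})\times(A_e\cup\{e\})}$. Since $\Omega_U^-=\widehat B_e\times\widehat B_e$, we have $R_U^-=U|_{\widehat B_e\times\widehat B_e}$. This restriction maps into $\widehat B_e$ by Proposition~\ref{prop:r-max-regular}, and it is commutative, increasing and associative because $U$ is, the square being closed under $U$. Regional order compatibility is inherited from the monotonicity of $U$. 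Uniqueness follows from the recovery formulas in the theorem, or directly from formula~\eqref{eq:r-max-representation}.

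\emph{Case $I_e^3=\varnothing$.} Here $I_e^3\times I_e^3=\varnothing$. Since $\Delta_U^-$ and $E_U^-$ are subsets of $I_e^3\times I_e^3$, they are forced to be empty for every $U$ in the class. Thus the construction represents the whole class.

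There is no real obstacle. The one point that needs care is confirming that $R$ maps into $\widehat B_e$, so that both nested pairs in Condition~\ref{cond:max-data-assoc}(c) stay in $\Omega^-$ and the condition reduces exactly to associativity of $R$.
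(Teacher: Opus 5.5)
Your proposal is correct and follows the same route as the paper. You specialize Theorem~\ref{thm:r-max-representation} to $D=E=\varnothing$, note that Condition~\ref{cond:max-data-layers} and clauses (a), (b) of Condition~\ref{cond:max-data-assoc} become void, and reduce clause (c) to the associativity of $R$. You also spell out the converse and the case $I_e^3=\varnothing$, where $\Delta_U^-,E_U^-\subseteq I_e^3\times I_e^3=\varnothing$; these are immediate consequences that the paper's short proof leaves implicit.
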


\begin{proof}
Set $D=E=\varnothing$ in
Theorem~\ref{thm:r-max-representation}. Conditions
\ref{cond:max-data-layers} and the first two clauses of
Condition~\ref{cond:max-data-assoc} are then void. In its third clause, the
membership equivalences involving $D$ and $E$ are tautological, and the only
non-vacuous requirement is the associativity of $R$.
\end{proof}

The following example explains why compatibility between
different regions is required in
Corollary~\ref{cor:r-max-regular-core-construction}.

\begin{example}
\label{ex:regional-compatibility}
\rev{Let $Y=\{0,a,e,b,c,1\}$, and let
$\Delta_Y=\{(y,y):y\in Y\}\subseteq Y\times Y$ denote the diagonal
relation on $Y$.}
Define a pseudo-order $\unlhd$ by
\[
 \unlhd\;=\Delta_Y
 \cup\bigl(\{0\}\times Y\bigr)
 \cup\bigl(Y\times\{1\}\bigr)
 \cup\{(a,e),(a,b),(e,b),(b,c)\}.
\]
The displayed relation is identical, under the given labels, to the one in
Example~\ref{ex:finite-utop-star}; its Hasse-type diagram is therefore the
diagram already shown in Figure~\ref{fig:hasse-utop-star}.

The only incomparable unordered pairs are $\{a,c\}$ and $\{e,c\}$; both
have meet $0$ and join $1$. Hence this relation defines a bounded trellis.
It is proper because
\[
 (e\wedge b)\wedge c=0\ne e=e\wedge(b\wedge c).
\]
The principal sections of $e$ are
$\{x\in Y:x\unlhd e\}=\{0,a,e\}$ and
$\{x\in Y:e\unlhd x\}=\{e,b,1\}$; the relation
$a\unlhd b$ makes $e$
middle-transitive. Since the strict relation is acyclic, $e$ belongs to $K$.
The corresponding regions are
\(A_e=\{0,a\}\), \(B_e=\{b,c,1\}\)  and \(I_e^3=\varnothing\).

Let $T$ be the minimum operation on the auxiliary chain $0<a<e$, let $Q$
be the maximum operation on the auxiliary chain $b<c<1$, and let $U$ be the
operation defined by four cases in
Corollary~\ref{cor:r-max-regular-core-construction},
but without imposing regional order compatibility. The specified quadruple satisfies
Conditions~\ref{cond:max-data-T}--\ref{cond:max-data-assoc} with
$D=E=\varnothing$ and $R=Q$. Writing $\preceq$ for the
induced product pseudo-order, we nevertheless have
\((a,b)\preceq(e,c)\), \(U(a,b)=a\), and \(U(e,c)=c\),
where $a\ntrianglelefteq c$. Thus $U$ is not increasing.
This failure is precisely the one ruled out by
Condition~\ref{cond:max-data-cross-order}. It also agrees with
Proposition~\ref{prop:r-nonempty}, since the required direct comparison
$a\unlhd c$ fails even though $a$ reaches $c$.
\end{example}

\subsubsection{Representation of
\texorpdfstring{$\widetilde{\mathcal{U}}_{\mathrm{min}}^{r}$}{Umin-r}}

\rev{The representation below is the explicit dual of
Theorem~\ref{thm:r-max-representation}.}
For sets $D,E\subseteq I_e^3\times I_e^3$, put
\[
 \Omega^+_{D,E}=(\widehat A_e\times\widehat A_e)\setminus(D\cup E).
\]
Let
\[
 S:(B_e\cup\{e\})\times(B_e\cup\{e\})\to B_e\cup\{e\},
 \qquad
 F:\Omega^+_{D,E}\to\widehat A_e
\]
be maps.
The letter $F$ denotes the partial operation in the representing datum; when
the datum is recovered from a given uninorm $U$, one has $F=R_U^+$.
\begin{definition}\label{def:r-min-admissible}
An \emph{admissible $\widetilde{\mathcal{U}}_{\mathrm{min}}^{r}$-datum} is a quadruple
$(S,D,E,F)$ satisfying the following conditions.
\begin{enumerate}[label=\textup{(Min\arabic*)}, itemsep=3pt]
 \item\label{cond:min-data-S}
 $S$ is commutative, associative, and increasing and has neutral element $e$.
 If $D\ne\varnothing$, then $B_e$ has a smallest element $s\in]e,1]$
  satisfying
   $S(b,s)=b$ for $b\in B_e$.

 \item\label{cond:min-data-layers}
 $D$ and $E$ are symmetric upper sets in
 $(I_e^3\times I_e^3,\preceq)$, $E$ satisfies the unique-partner condition, and
 their supports are disjoint:
$
  \operatorname{supp}(D)\cap\operatorname{supp}(E)=\varnothing.
$

 \item\label{cond:min-data-core}
 Under Conditions~\ref{cond:min-data-S} and
 \ref{cond:min-data-layers}, define
 \begin{equation}\label{eq:r-lower-core-map}
  \Psi_{D,E,F}(x,y)=
  \begin{cases}
   s,      & (x,y)\in D,\\
   e,      & (x,y)\in E,\\
   F(x,y), & (x,y)\in\Omega^+_{D,E}.
  \end{cases}
 \end{equation}
 The first line is omitted when $D=\varnothing$. Since the
 disjoint-support condition implies $D\cap E=\varnothing$, the sets $D$,
 $E$, and $\Omega^+_{D,E}$ form a disjoint partition of
 $\widehat A_e\times\widehat A_e$; hence $\Psi_{D,E,F}$ is well defined. The element $s$
 exists when needed by Condition~\ref{cond:min-data-S}. The partial operation
 $F$ is commutative, and $\Psi_{D,E,F}$ is increasing on
 $\widehat A_e\times\widehat A_e$.

 \item\label{cond:min-data-assoc}
 For all $x,y,z\in\widehat A_e$, the following associativity conditions hold
 whenever the pairs in their hypotheses belong to the
 indicated domains:
 \begin{enumerate}[label=\textup{(\alph*)}, itemsep=2pt]
  \item If $(x,y)\in D$ and $(y,z)\in\Omega^+_{D,E}$, then
        $
        (x,F(y,z))\in D.
        $
  \item If $(x,y)\in E$ and $(y,z)\in\Omega^+_{D,E}$, then
        \[
        (x,F(y,z))\in\Omega^+_{D,E},
        \qquad F(x,F(y,z))=z.
        \]
  \item If $(x,y),(y,z)\in\Omega^+_{D,E}$, then, for
        $\Lambda\in\{D,E\}$,
        \[
        (F(x,y),z)\in\Lambda
        \quad\Leftrightarrow\quad
        (x,F(y,z))\in\Lambda.
        \]
        If $(F(x,y),z),(x,F(y,z))\in\Omega^+_{D,E}$, then
        $
        F(F(x,y),z)=F(x,F(y,z)).
        $
 \end{enumerate}

 \item\label{cond:min-data-cross-order}
 Define the four pairwise disjoint regions
 \begin{align*}
  \mathscr P_0^+&=(B_e\cup\{e\})\times(B_e\cup\{e\}),\\
  \mathscr P_1^+&=(\widehat A_e\times\{e\})
                    \cup(B_e\times\widehat A_e),\\
  \mathscr P_2^+&=(\{e\}\times\widehat A_e)
                    \cup(\widehat A_e\times B_e),\\
  \mathscr P_3^+&=\widehat A_e\times\widehat A_e,
 \end{align*}
 and the maps
 \[
  V_0^+=S,\qquad V_1^+(x,y)=x,\qquad
  V_2^+(x,y)=y,\qquad V_3^+=\Psi_{D,E,F}.
 \]
 The family $\{(\mathscr P_i^+,V_i^+)\}_{i=0}^3$ is regionally
 order-compatible.
\end{enumerate}
\end{definition}

The admissibility conditions also imply
$S(B_e\times B_e)\subseteq B_e$.
Moreover, if $D\ne\varnothing$, then
$
g=e\vee c
\ \text{for every }c\in I_e^3.
$
Indeed, the first inclusion follows directly from
Lemma~\ref{lem:r-local-cone-closure}. To prove the second assertion,
suppose that $D\ne\varnothing$ and fix $c\in I_e^3$. Then
%
%
%
\[
 (c,e)\preceq(c,s),\qquad V_1^+(c,e)=c,
 \qquad V_2^+(c,s)=s.
\]
Regional order compatibility gives $c\unlhd s$. If $z$ is any upper bound of
$e$ and $c$, then $z=e$ is impossible because $c\parallel e$; hence
$z\in]e,1]\subseteq B_e$, and
\rev{since $s$ is the smallest element of $B_e$, we have $s\unlhd z$.}
Therefore,
\[
 s=e\vee c\qquad(c\in I_e^3).
\]
Thus neither closure nor the join identity is an independent admissibility
requirement.

The conditions on $D$ and $E$ also recover the support properties in
Theorem~\ref{thm:r-min-exceptional}. Because $E$ is an upper set and partners
are unique, every element of $\operatorname{supp}(E)$ is maximal in $I_e^3$;
hence $\operatorname{supp}(E)$ is an antichain. Moreover,
$\operatorname{supp}(D)$ is an upper set. If
$p\in\operatorname{supp}(E)$ and $q\in\operatorname{supp}(D)$ were
comparable, then $q\unlhd p$ would imply
$p\in\operatorname{supp}(D)$, whereas $p\unlhd q$ would imply $q=p$ by
the maximality of $p$. Either conclusion contradicts
$\operatorname{supp}(D)\cap\operatorname{supp}(E)=\varnothing$; hence the
two supports are elementwise incomparable.{} Monotonicity of
$\Psi_{D,E,F}$ implies monotonicity of its
restriction $F$.

Condition~\ref{cond:min-data-assoc} provides the remaining cases needed to
verify associativity on
$\widehat A_e\times\widehat A_e\times\widehat A_e$, whereas
Condition~\ref{cond:min-data-cross-order} supplies the comparisons between
different formula regions required for the monotonicity of the assembled
operation.  
Regional order compatibility supplies the comparisons required for $U$ to
be increasing when the two pairs belong to different regions. The maps
$V_0^+=S$ and $V_3^+=\Psi_{D,E,F}$ are increasing by
Conditions~\ref{cond:min-data-S} and~\ref{cond:min-data-core}, whereas
$V_1^+$ and $V_2^+$ are coordinate projections. Together, these conditions
ensure associativity and monotonicity.{} In particular, regional order compatibility
forces the feasibility condition in Proposition~\ref{prop:r-nonempty}:
compare $(x,e)$ with $(x,y)$ for $x\in I_e^1\cup I_e^3$ and $y\in]e,1[$.

The same interpretation applies to the following representation: the datum
is uniquely recovered from the uninorm, and every admissible datum yields a
member of the stated class.

\begin{theorem}[Representation of
$\widetilde{\mathcal{U}}_{\mathrm{min}}^{r}$]
\label{thm:r-min-representation}
Let $\mathbb{P}=(X,\unlhd,\wedge,\vee,0,1)$ be a bounded trellis,
let $e\in K\setminus\{0,1\}$, and let $U:X\times X\to X$ be a binary operation.
Then
$U\in\widetilde{\mathcal{U}}_{\mathrm{min}}^{r}$ if and only if there exists an
admissible $\widetilde{\mathcal{U}}_{\mathrm{min}}^{r}$-datum $(S,D,E,F)$ such that
\begin{equation}\label{eq:r-min-representation}
U(x,y)=
\begin{cases}
S(x,y),
 & (x,y)\in(B_e\cup\{e\})\times(B_e\cup\{e\}),\\
x,
 & (x,y)\in(\widehat A_e\times\{e\})
       \ \cup\ (B_e\times \widehat A_e),\\
y,
 & (x,y)\in(\{e\}\times \widehat A_e)
       \ \cup\ (\widehat A_e\times B_e),\\
s,
 & (x,y)\in D,\\
e,
 & (x,y)\in E,\\
F(x,y),
 & (x,y)\in\Omega^+_{D,E}.
\end{cases}
\end{equation}
The fourth line is omitted when $D=\varnothing$.
The representing datum is unique and is recovered from $U$ as
\[
 S=U|_{(B_e\cup\{e\})\times(B_e\cup\{e\})},\qquad
 D=\Delta_U^+,\qquad E=E_U^+,\qquad F=R_U^+.
\]
\end{theorem}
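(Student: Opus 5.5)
The proof will dualize the proof of Theorem~\ref{thm:r-max-representation}, interchanging $A_e$ with $B_e$, $\widehat B_e$ with $\widehat A_e$, lower sets with upper sets, $g$ with $s$, and $\unlhd$ with its converse.

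\emph{Forward direction.} Let $U\in\widetilde{\mathcal{U}}_{\mathrm{min}}^{r}$ and put $S=U|_{(B_e\cup\{e\})\times(B_e\cup\{e\})}$, $D=\Delta_U^+$, $E=E_U^+$, $F=R_U^+$.
\begin{enumerate}[label=\textup{(\alph*)}, itemsep=1pt, parsep=0pt, topsep=4pt]
\item Condition~\ref{cond:min-data-S} follows from Proposition~\ref{prop:r-cone-closure}, which gives closure of $B_e\cup\{e\}$ under $U$, together with the inherited algebraic properties.
\item The element $s=s_U$ and the identity $S(b,s_U)=b$ come from Theorem~\ref{thm:r-min-exceptional}\textup{(ii)}.
\item Condition~\ref{cond:min-data-layers} comes from parts~\textup{(i)}, \textup{(iii)}, and~\textup{(iv)} of the same theorem. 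Disjointness of the supports is part~\textup{(iv)}.
\item Commutativity of $F$ and the identity $\Psi_{D,E,F}=U|_{\widehat A_e\times\widehat A_e}$ hold by construction, so $\Psi_{D,E,F}$ is increasing.
\item Condition~\ref{cond:min-data-assoc} is exactly Theorem~\ref{thm:r-min-exceptional}\textup{(v)}\textup{(a)}--\textup{(c)}.
\item Regional order compatibility holds because $U$ is increasing.
\item Formula~\eqref{eq:r-min-representation} is the canonical decomposition~\eqref{eq:r-min-canonical} of Proposition~\ref{prop:r-canonical-decomposition}.
\end{enumerate}

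\emph{Converse: well-definedness, commutativity, neutrality, monotonicity.} The regions $\mathscr P_0^+,\dots,\mathscr P_3^+$ partition $X\times X$, and $D,E,\Omega^+_{D,E}$ partition $\mathscr P_3^+$, so $U$ is well defined. Commutativity follows from the symmetry of these sets and of the mixed lines, together with the commutativity of $S$ and $F$. Neutrality of $e$ comes from Condition~\ref{cond:min-data-S} and the mixed lines. Monotonicity follows from Definition~\ref{def:regional-order-compatibility}: each $V_i^+$ is increasing on its region, with $V_0^+$ and $V_3^+$ increasing by hypothesis and $V_1^+,V_2^+$ projections.

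\emph{Converse: associativity.} Triples containing $e$ are handled by neutrality. The remaining triples are split by how many entries lie in $B_e$.
\begin{enumerate}[label=\textup{(\alph*)}, itemsep=1pt, parsep=0pt, topsep=4pt]
\item All three entries in $B_e$: this is the associativity of $S$. It uses $S(B_e\times B_e)\subseteq B_e$, which holds by Lemma~\ref{lem:r-local-cone-closure}.
\item Two entries $b,b'\in B_e$ and one $h\in\widehat A_e$: every bracketing yields $S(b,b')$. The projection lines apply because $S(b,b')\in B_e$.
\item One entry $b\in B_e$ and $h,k\in\widehat A_e$: the bracketings $U(U(b,h),k)$ and $U(U(b,k),h)$ both equal $b$. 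For $U(b,U(h,k))=b$, set $v=U(h,k)$.
  \begin{itemize}
  \item If $v\in\widehat A_e$ (the case $(h,k)\in\Omega^+_{D,E}$), then $U(b,v)=b$ by the projection line.
  \item If $v=e$, then $U(b,v)=b$ by neutrality.
  \item If $v=s$ (the case $(h,k)\in D$), then $U(b,s)=S(b,s)=b$ by Condition~\ref{cond:min-data-S}. This is the one place where the hypothesis $S(b,s)=b$ is used.
  \end{itemize}
\item All three entries in $\widehat A_e$: classify $(x,y)$ and $(y,z)$ as members of $D$, $E$, or $\Omega^+_{D,E}$.
  \begin{itemize}
  \item $D$--$D$: both sides are $s$. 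This needs $(s,z)\in\mathscr P_1^+$ with value $s$, which the projection line gives.
  \item $E$--$E$: unique partners give $x=z$.
  \item $D$--$E$: impossible, since the supports are disjoint.
  \item $D$--$\Omega$, $E$--$\Omega$, $\Omega$--$\Omega$: handled by Condition~\ref{cond:min-data-assoc}\textup{(a)}, \textup{(b)}, \textup{(c)} respectively, with mirror cases obtained from commutativity.
  \end{itemize}
\end{enumerate}

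\emph{Uniqueness.} The value ranges of the six lines of~\eqref{eq:r-min-representation} separate $D$, $E$, and $\Omega^+_{D,E}$:
\begin{itemize}
\item values in $B_e$ identify $D$;
\item the value $e$ identifies $E$;
\item values in $\widehat A_e$ identify $\Omega^+_{D,E}$.
\end{itemize}
Hence $D=\Delta_U^+$, $E=E_U^+$, and $F=R_U^+$ are forced, and $S$ is forced as a restriction of $U$.

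\emph{Main obstacle.} The delicate step is associativity on $\widehat A_e^3$. One must check that each nested value lands in the region the case analysis presumes, for example that $U(s,z)=s$ is governed by the projection line and not by $S$. I would first verify this bookkeeping carefully against the max case and then transcribe the remaining cases.
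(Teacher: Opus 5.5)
Your proposal is correct and follows essentially the same route as the paper, which presents this theorem as the explicit dual of Theorem~\ref{thm:r-max-representation}. As there, the forward direction reads the datum off Proposition~\ref{prop:r-cone-closure}, Theorem~\ref{thm:r-min-exceptional} and Proposition~\ref{prop:r-canonical-decomposition}, and the converse combines regional order compatibility with the $D$/$E$/$\Omega$ case analysis; your points about $S(B_e\times B_e)\subseteq B_e$, $S(b,s)=b$, and $U(s,z)=s$ via the projection line are precisely the dual counterparts of the steps in the max-case proof.
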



\begin{corollary}[Construction for
$\widetilde{\mathcal{U}}_{\mathrm{min}}^{r}$ with $D=E=\varnothing$]
\label{cor:r-min-regular-core-construction}
Let $S:(B_e\cup\{e\})\times(B_e\cup\{e\})\to B_e\cup\{e\}$ be commutative,
associative, and increasing and have neutral element $e$. Let
$F:\widehat A_e\times\widehat A_e\to\widehat A_e$ be commutative, associative, and
increasing. If the four maps specified in
Condition~\ref{cond:min-data-cross-order}, with
$\Psi_{\varnothing,\varnothing,F}=F$, are regionally order-compatible, then
\[
 U(x,y)=
 \begin{cases}
  S(x,y), &(x,y)\in(B_e\cup\{e\})\times(B_e\cup\{e\}),\\
  x, &(x,y)\in(\widehat A_e\times\{e\})
        \cup(B_e\times\widehat A_e),\\
  y, &(x,y)\in(\{e\}\times\widehat A_e)
        \cup(\widehat A_e\times B_e),\\
  F(x,y), &(x,y)\in\widehat A_e\times\widehat A_e
 \end{cases}
\]
belongs to $\widetilde{\mathcal{U}}_{\mathrm{min}}^{r}$. Conversely, every member of
$\widetilde{\mathcal{U}}_{\mathrm{min}}^{r}$ with
$\Delta_U^+=E_U^+=\varnothing$ has this form uniquely, with
\[
 S=U|_{(B_e\cup\{e\})\times(B_e\cup\{e\})},\qquad
 F=U|_{\widehat A_e\times\widehat A_e}.
\]
In particular, if $I_e^3=\varnothing$, this construction represents the
whole class $\widetilde{\mathcal{U}}_{\mathrm{min}}^{r}$.
\end{corollary}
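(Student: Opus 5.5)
The plan is to obtain Corollary~\ref{cor:r-min-regular-core-construction} by specializing Theorem~\ref{thm:r-min-representation} to $D=E=\varnothing$, in the same way Corollary~\ref{cor:r-max-regular-core-construction} was obtained from Theorem~\ref{thm:r-max-representation}. For the construction direction, we take the quadruple $(S,\varnothing,\varnothing,F)$. Then $\Omega^+_{\varnothing,\varnothing}=\widehat A_e\times\widehat A_e$ and $\Psi_{\varnothing,\varnothing,F}=F$, and formula~\eqref{eq:r-min-representation} collapses to the displayed four-line formula, because the lines for $D$ and $E$ disappear.

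It then remains to check that this quadruple is an admissible $\widetilde{\mathcal{U}}_{\mathrm{min}}^{r}$-datum.
\begin{itemize}
\item Condition~\ref{cond:min-data-S} is exactly the hypothesis on $S$; its second clause is vacuous since $D=\varnothing$.
\item Condition~\ref{cond:min-data-layers} holds trivially, since the empty set is a symmetric upper set, satisfies the unique-partner condition, and has empty support.
\item Condition~\ref{cond:min-data-core} reduces to $F$ being commutative and increasing on $\widehat A_e\times\widehat A_e$, which is assumed.
\item In Condition~\ref{cond:min-data-assoc}, clauses (a) and (b) have unsatisfiable hypotheses. In clause (c), both membership equivalences are between false statements. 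The remaining requirement, that $(F(x,y),z)$ and $(x,F(y,z))$ lie in $\Omega^+$, holds automatically because $F$ takes values in $\widehat A_e$, and it yields $F(F(x,y),z)=F(x,F(y,z))$, i.e.\ associativity of $F$, which is assumed.
\item Condition~\ref{cond:min-data-cross-order} is precisely the assumed regional order compatibility.
\end{itemize}
Theorem~\ref{thm:r-min-representation} therefore gives $U\in\widetilde{\mathcal{U}}_{\mathrm{min}}^{r}$.

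For the converse, let $U\in\widetilde{\mathcal{U}}_{\mathrm{min}}^{r}$ with $\Delta_U^+=E_U^+=\varnothing$. Theorem~\ref{thm:r-min-representation} supplies the unique datum $(S,\Delta_U^+,E_U^+,R_U^+)=(S,\varnothing,\varnothing,F)$, where $F=R_U^+=U|_{\widehat A_e\times\widehat A_e}$. This $F$ maps into $\widehat A_e$ because $\Omega_U^+$ is the whole square, and it is commutative, increasing and associative: associativity follows from clause (c) as above, or directly by restricting associativity of $U$, which is legitimate because the square is closed under $U$. The four-line form and the uniqueness of $S$ and $F$ then follow from the recovery formulas of the theorem.

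Finally, if $I_e^3=\varnothing$, then $\Delta_U^+,E_U^+\subseteq I_e^3\times I_e^3$ are automatically empty, so every member of the class has this form. The only step needing care is the degenerate clause (c): one must note that $\Omega^+=\widehat A_e\times\widehat A_e$ and that $F$ has range in $\widehat A_e$, so the conditional associativity becomes unconditional associativity of $F$. I expect no further obstacle.
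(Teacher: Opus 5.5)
Your proposal is correct and follows the paper's route. The paper writes out this specialization only for the $\widetilde{\mathcal{U}}_{\mathrm{max}}^{r}$ analogue, Corollary~\ref{cor:r-max-regular-core-construction}, and treats the present case as its dual: set $D=E=\varnothing$ in Theorem~\ref{thm:r-min-representation}, note that Condition~\ref{cond:min-data-layers} and clauses (a), (b) of Condition~\ref{cond:min-data-assoc} are void, and observe that clause (c) reduces to the associativity of $F$. One small point: in the converse, the inclusion of the range of $F=R_U^+$ in $\widehat A_e$ comes from Theorem~\ref{thm:r-min-exceptional}\textup{(v)} (equivalently, the codomain of the datum), not from $\Omega_U^+$ being the whole square, but this does not affect your argument.
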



The following example illustrates the possible values on
$I_e^3\times I_e^3$ in the representations of
Theorems~\ref{thm:r-max-representation} and~\ref{thm:r-min-representation}
for proper bounded trellises. In particular, it exhibits nonempty sets
$\Delta_U^-$, $\Delta_U^+$, and $E_U^-$.

\begin{example}
\label{ex:proper-trellis-r-classes}
Let
\(X=\{0,a,b,e,c,1\}\).
Define $\unlhd$ by
\begin{equation}\label{eq:example-pseudo-order}
\unlhd\;=\Delta_X
 \cup\bigl(\{0\}\times X\bigr)
 \cup\bigl(X\times\{1\}\bigr)
 \cup\{(a,b),(b,e),(a,c),(b,c)\},
\end{equation}
\rev{where $\Delta_X=\{(x,x):x\in X\}\subseteq X\times X$ is the
diagonal relation.} The relation is reflexive,
antisymmetric, and bounded, but it is not transitive: $a\unlhd b$ and
$b\unlhd e$, whereas $a\ntrianglelefteq e$.  
Figure~\ref{fig:hasse-r-base} displays this failure of transitivity by the
dashed curve joining $a$ and $e$.

\begin{figure}[!htbp]
\centering
\includegraphics[width=0.31\textwidth]{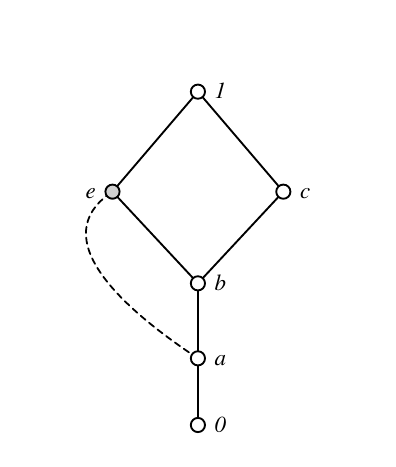}
\caption{Hasse-type diagram of the bounded trellis $\mathbb P$ in
Example~\ref{ex:proper-trellis-r-classes}.}
\label{fig:hasse-r-base}
\end{figure}

Every pair has an infimum and a supremum. For a comparable pair these
are its smaller and larger elements, respectively. The only
incomparable unordered pairs are $\{a,e\}$ and $\{c,e\}$, and
\[
 a\wedge e=0,\quad a\vee e=1,
 \qquad
 c\wedge e=b,\quad c\vee e=1.
\]
Hence $\mathbb{P}=(X,\unlhd,\wedge,\vee,0,1)$ is a bounded trellis.
It is a proper trellis, since
\[
 (a\wedge b)\wedge e=0
 \ne a=a\wedge(b\wedge e).
\]
The lower and upper sections of $e$ are, respectively,
$[0,e]=\{0,b,e\}$ and $[e,1]=\{e,1\}$. Therefore $e$ is middle-transitive. The
strict relation in \eqref{eq:example-pseudo-order} is acyclic, so
$e\in K$. Moreover,
\(I_e^1=\{a\},\ I_e^2=\varnothing, \ I_e^3=\{c\}\),
and consequently
\(A_e=\{0,a,b\},\ B_e=\{1\}\).

Define two binary operations
$U_{\mathrm{max}},U_{\mathrm{min}}:X\times X\to X$
by the following tables. The subscripts indicate the principal classes
$\widetilde{\mathcal{U}}_{\mathrm{max}}^{r}$ and
$\widetilde{\mathcal{U}}_{\mathrm{min}}^{r}$ illustrated by these
operations, respectively; they do not denote the usual maximum and
minimum operations. The rows and columns are ordered as
$0,a,b,e,c,1$.
\[
\begin{array}{c|cccccc}
U_{\mathrm{max}}&0&a&b&e&c&1\\ \hline
0&0&0&0&0&0&0\\
a&0&a&a&a&a&a\\
b&0&a&b&b&b&b\\
e&0&a&b&e&c&1\\
c&0&a&b&c&b&c\\
1&0&a&b&1&c&1
\end{array}
\qquad
\begin{array}{c|cccccc}
U_{\mathrm{min}}&0&a&b&e&c&1\\ \hline
0&0&0&0&0&c&1\\
a&0&a&a&a&c&1\\
b&0&a&b&b&c&1\\
e&0&a&b&e&c&1\\
c&c&c&c&c&1&1\\
1&1&1&1&1&1&1
\end{array}.
\]
Both tables are symmetric and have neutral element $e$. Because the
operations are commutative, it suffices, for each unordered triple, to compare
the values obtained by forming each of its three pairs first. For
$U_{\mathrm{max}}$, the restriction to
$\{0,a,b,e\}$ is the minimum operation for the auxiliary chain
$0<a<b<e$; moreover,
$U_{\mathrm{max}}(h,t)=t$ for
$h\in\{c,1\}$ and $t\in\{0,a,b\}$; and
\(U_{\mathrm{max}}(c,c)=b\), \(U_{\mathrm{max}}(c,1)=c\), and
\(U_{\mathrm{max}}(1,1)=1\).
After neutral occurrences of $e$ are deleted, every evaluation of a
triple containing an element of $\{0,a,b\}$ gives the least such element. If
no such element occurs, the remaining triples use only $c$ and $1$, and these three identities give the same value for every choice of the first
pair. For $U_{\mathrm{min}}$, $1$ is absorbing, the restriction to
$\{0,a,b,e\}$ is again the auxiliary minimum, $c$ absorbs
$\{0,a,b\}$, and $U_{\mathrm{min}}(c,c)=1$. Thus any triple containing $1$ or at least two occurrences of $c$ evaluates
to $1$. \par\begin{samepage}
If it contains exactly one $c$ and no $1$, its value is $c$. Every
remaining triple is evaluated by the auxiliary minimum. Thus both operations
are associative.\par
\end{samepage}

For a binary operation $W$ on $X$, put
\[
 \mathcal Q_W=\{(W(x_1,x_2),W(y_1,y_2))\mid
 (x_1,y_1),(x_2,y_2)\in\unlhd\}.
\]
The relation $\unlhd$ has $19$ ordered pairs: besides the reflexive pairs
and the pairs involving $0$ or $1$, its strict comparisons are
\[
 a\lhd b,\qquad b\lhd e,\qquad a\lhd c,\qquad b\lhd c.
\]
Reading the two displayed tables gives
\[
 \mathcal Q_{U_{\mathrm{max}}}=\mathcal Q_{U_{\mathrm{min}}}
 =\Delta_X\cup(\{0\}\times X)\cup(X\times\{1\})
  \cup\{(a,b),(b,e),(a,c),(b,c)\}=\unlhd.
\]
Consequently, $(x_1,x_2)\preceq(y_1,y_2)$ implies
$W(x_1,x_2)\unlhd W(y_1,y_2)$ for
$W\in\{U_{\mathrm{max}},U_{\mathrm{min}}\}$; hence both operations are
jointly increasing. Therefore both operations are uninorms.

Since $\widehat B_e=\{c,1\}$ and
\(U_{\mathrm{max}}(x,y)=x\) for $x\in A_e$ and $y\in\widehat B_e$,
we have $U_{\mathrm{max}}\in\widetilde{\mathcal{U}}_{\mathrm{max}}^{r}$. Its lower component
$T=U_{\mathrm{max}}|_{(A_e\cup\{e\})\times(A_e\cup\{e\})}$ is the minimum operation on the auxiliary
chain $0<a<b<e$. Its sets $\Delta_{U_{\mathrm{max}}}^-$ and
$E_{U_{\mathrm{max}}}^-$ and the remaining representation data are
\[
 \Delta_{U_{\mathrm{max}}}^-=\{(c,c)\},
 \qquad E_{U_{\mathrm{max}}}^-=\varnothing,
 \qquad g_{U_{\mathrm{max}}}=e\wedge c=b,
\]
and
\[
\begin{aligned}
 \Omega_{U_{\mathrm{max}}}^-&=\{(c,1),(1,c),(1,1)\},\\
 R_{U_{\mathrm{max}}}^-(c,1)&=R_{U_{\mathrm{max}}}^-(1,c)=c,\\
 R_{U_{\mathrm{max}}}^-(1,1)&=1.
\end{aligned}
\]
Condition~\ref{cond:max-data-assoc}\textup{(a)} is non-vacuous in this
example. Indeed, $(c,c)\in\Delta_{U_{\mathrm{max}}}^-$ and
$(c,1)\in\Omega_{U_{\mathrm{max}}}^-$; since
$R_{U_{\mathrm{max}}}^-(c,1)=c$, one obtains
$
 \bigl(c,R_{U_{\mathrm{max}}}^-(c,1)\bigr)=(c,c)
 \in\Delta_{U_{\mathrm{max}}}^-,
$
which is precisely Condition~\ref{cond:max-data-assoc}\textup{(a)} for
$x=y=c$ and $z=1$.
Thus the incomparable reachable element $a\in I_e^1$ is handled by the
lower component, whereas only $(c,c)$ belongs to
$\Delta_{U_{\mathrm{max}}}^-$.

Similarly, $\widehat A_e=\{0,a,b,c\}$ and
\(U_{\mathrm{min}}(1,x)=1\) for  $x\in\widehat A_e$,
so $U_{\mathrm{min}}\in\widetilde{\mathcal{U}}_{\mathrm{min}}^{r}$. In this case the upper
component $S=U_{\mathrm{min}}|_{(B_e\cup\{e\})\times(B_e\cup\{e\})}$ is the maximum operation on the
chain $e<1$, and
\[
 \Delta_{U_{\mathrm{min}}}^+=\{(c,c)\},
 \qquad E_{U_{\mathrm{min}}}^+=\varnothing,
 \qquad s_{U_{\mathrm{min}}}=e\vee c=1.
\]
Moreover, $\Omega_{U_{\mathrm{min}}}^+=(\widehat A_e\times\widehat A_e)\setminus\{(c,c)\}$
and $R_{U_{\mathrm{min}}}^+=U_{\mathrm{min}}|_{\Omega_{U_{\mathrm{min}}}^+}$.
Writing $F=R_{U_{\mathrm{min}}}^+$, Condition
\ref{cond:min-data-assoc}\textup{(a)} is also non-vacuous: taking
$x=y=c$ and $z=0$, one has $(c,c)\in\Delta_{U_{\mathrm{min}}}^+$,
$(c,0)\in\Omega_{U_{\mathrm{min}}}^+$, and $F(c,0)=c$, whence
\[
 (c,F(c,0))=(c,c)\in\Delta_{U_{\mathrm{min}}}^+.
\]
By Theorems~\ref{thm:r-max-representation} and
\ref{thm:r-min-representation}, the corresponding data
\[
 \bigl(T,\Delta^-_{U_{\mathrm{max}}},E^-_{U_{\mathrm{max}}},
       R^-_{U_{\mathrm{max}}}\bigr)
 \quad\text{and}\quad
 \bigl(S,\Delta^+_{U_{\mathrm{min}}},E^+_{U_{\mathrm{min}}},F\bigr)
\]
are the unique representing data recovered from the two operation tables;
they are not merely convenient choices.
\par\begin{samepage}
 This example therefore satisfies
$\Delta_{U_{\mathrm{max}}}^-\ne\varnothing$ and
$\Delta_{U_{\mathrm{min}}}^+\ne\varnothing$ on the proper trellis
$\mathbb{P}$. It also illustrates
why $I_e^1$ and $I_e^3$ must be kept distinct in the representation.\par
\end{samepage}

An example with $E_U^-\ne\varnothing$ is obtained by a slight modification. Let
\[
 \unlhd'\;=\unlhd\setminus\{(a,c)\}
\]
and let $V$ be obtained from the displayed table for $U_{\mathrm{max}}$ by
changing only
\[
V(c,c)=e,\qquad V(c,1)=V(1,c)=1.
\]
The relation $\unlhd'$ again defines a proper bounded trellis, denoted by
$\mathbb{P}'$. Its
incomparable unordered pairs are $\{a,e\}$, $\{a,c\}$, and $\{c,e\}$; their
meets and joins are
\[
 a\wedge' e=a\wedge' c=0,\qquad
 a\vee' e=a\vee' c=1,\qquad
 c\wedge' e=b,\qquad c\vee' e=1.
\]
Figure~\ref{fig:hasse-r-variant} shows that deleting $(a,c)$ introduces
the additional dashed curve joining $a$ and $c$.

\begin{figure}[!htbp]
\centering
\includegraphics[width=0.31\textwidth]{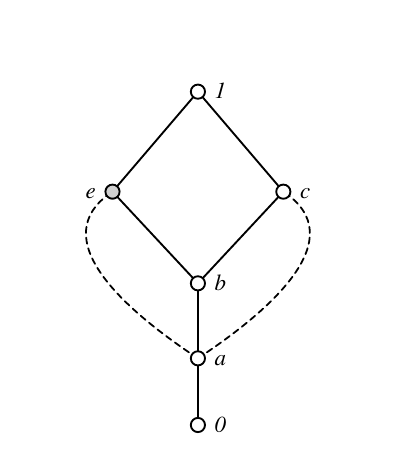}
\caption{Hasse-type diagram of the modified bounded trellis $\mathbb P'$ in
Example~\ref{ex:proper-trellis-r-classes}.}
\label{fig:hasse-r-variant}
\end{figure}

The decomposition is unchanged, and $e\in K$. The operation $V$ is
commutative and has neutral element $e$. On $\{0,a,b,e\}$ it is the minimum
for the auxiliary chain $0<a<b<e$, and every element of $\{0,a,b\}$ absorbs
both $c$ and $1$. On $\{e,c,1\}$, the identities
\[
 V(c,c)=e,\qquad V(c,1)=1,\qquad V(1,1)=1
\]
show directly that the restriction is associative. These observations cover
every unordered triple, so $V$ is associative.
With $\unlhd'=\unlhd\setminus\{(a,c)\}$, we obtain
\[
\begin{aligned}
\mathcal Q_V
&=\{(V(x_1,x_2),V(y_1,y_2))\mid
  (x_1,y_1),(x_2,y_2)\in\unlhd'\}\\
&=\Delta_X\cup(\{0\}\times X)\cup(X\times\{1\})
  \cup\{(a,b),(b,e),(b,c)\}\\
&=\unlhd'.
\end{aligned}
\]

Thus $V$ is jointly increasing. Hence, on $\mathbb{P}'$,
$V\in\widetilde{\mathcal{U}}_{\mathrm{max}}^{r}$. Here
$\Delta_V^-=\varnothing$, whereas
\(E_V^-=\{(c,c)\},\ \Omega_V^-=\{(c,1),(1,c),(1,1)\}\)
and $R_V^-$ is constantly $1$ on $\Omega_V^-$. Thus $E_V^-$ satisfies the
unique-partner condition, with $(c,c)$ as its only pair. Taking the order
dual of this construction gives a proper bounded trellis on which the same
operation $V$ belongs to $\widetilde{\mathcal{U}}_{\mathrm{min}}^{r}$.
Indeed, duality interchanges $A_e$ and $B_e$ while leaving $I_e^3$
unchanged, and hence $E_V^+=\{(c,c)\}\ne\varnothing$ in the dual trellis.
\end{example}

\section{Comparison and lattice specialization}\label{sec:comparison}

In this section, we first compare the representation structures of the
four principal classes and determine which intersections between the two
pairs are empty and which may be nonempty. We then clarify the precise
relationship between the classes considered here and the corresponding
classes of uninorms on bounded psosets introduced in~\cite{KL26}.
Finally, we show how the trellis classes and their representation results
specialize to bounded lattices.

%
%

\rev{We begin with a comparison of the four representations.}
\begin{remark}
In the common notation, each representation is specified by its rule on
the mixed regions together with the representing components on the
remaining prescribed domains.


For $\widetilde{\mathcal{U}}_{\mathrm{top}}$, the values on the relevant
mixed region are given by
\[
U(x,a)=\iota(x)
\qquad
((x,a)\in\widehat B_e\times A_e),
\]
and the representing components  consist of an interior operator $\iota$, a
lower component $U_L$, and an upper operation $Q$. Dually, for
$\widetilde{\mathcal{U}}_{\mathrm{bot}}$,
\[
U(x,b)=\kappa(x)
\qquad
((x,b)\in\widehat A_e\times B_e),
\]
and the required data are a closure operator $\kappa$, an upper
component $U_R$, and a lower operation $P$.
For $\widetilde{\mathcal{U}}_{\mathrm{max}}^{r}$, the corresponding
values are prescribed by
\[
U(a,x)=a
\qquad
((a,x)\in A_e\times\widehat B_e),
\]
while the remaining components consist of a lower component $T$, symmetric
lower sets $D$ and $E$, and a map
\[
R:\Omega_{D,E}^{-}\to\widehat B_e.
\]
Dually, $\widetilde{\mathcal{U}}_{\mathrm{min}}^{r}$ satisfies
\[
U(b,x)=b
\qquad
((b,x)\in B_e\times\widehat A_e),
\]
and is represented by an upper component $S$, symmetric upper sets
$D$ and $E$, and a map
\[
F:\Omega_{D,E}^{+}\to\widehat A_e.
\]
Thus, for $\widetilde{\mathcal{U}}_{\mathrm{top}}$ and $\widetilde{\mathcal{U}}_{\mathrm{bot}}$,  the values on the mixed regions
are determined by a unary operator, whereas for $\widetilde{\mathcal{U}}_{\mathrm{max}}^{r}$ and  $\widetilde{\mathcal{U}}_{\mathrm{min}}^{r}$,
they are prescribed to equal one of the two arguments. These two forms
of representation impose different constraints, and no inclusion
relation between the corresponding classes follows in general.
All four representations are based on the common decomposition 
$
X=A_e\mathbin{\cup}\{e\}\mathbin{\cup}B_e\mathbin{\cup}I_e^3
$ 
and
impose explicit compatibility conditions between its constituent
regions.


\end{remark}

The defining conditions on the mixed domains also yield the following
relations between the two pairs of classes.
\begin{proposition}\label{prop:cross-part-relations}
For every fixed bounded trellis and neutral element satisfying the standing
assumptions,

\begin{equation}\label{eq:21}
 \widetilde{\mathcal{U}}_{\mathrm{top}}\cap\widetilde{\mathcal{U}}_{\mathrm{max}}^{r}
 =\varnothing,
 \qquad
 \widetilde{\mathcal{U}}_{\mathrm{bot}}\cap\widetilde{\mathcal{U}}_{\mathrm{min}}^{r}
 =\varnothing.
\end{equation}
However,  even on a proper bounded trellis,   the operations $U_{\mathrm{min}}$ and $U_{\mathrm{max}}$ in
Example~\ref{ex:proper-trellis-r-classes} satisfy, respectively,

\begin{equation}\label{eq:22}
 U_{\mathrm{min}}\in
 \widetilde{\mathcal{U}}_{\mathrm{top}}^{\star}\cap
 \widetilde{\mathcal{U}}_{\mathrm{min}}^{r},
 \qquad
 U_{\mathrm{max}}\in
 \widetilde{\mathcal{U}}_{\mathrm{bot}}^{\star}\cap
 \widetilde{\mathcal{U}}_{\mathrm{max}}^{r}.
\end{equation}
\end{proposition}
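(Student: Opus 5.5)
The two disjointness claims in \eqref{eq:21} can be settled by evaluating a single pair, $(0,1)$ or $(1,0)$. The two memberships in \eqref{eq:22} can be settled by reading the operation tables of Example~\ref{ex:proper-trellis-r-classes} against conditions (A1)--(A3) and (B1)--(B3) of Definition~\ref{def:top-bottom-classes}.

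For the first identity in \eqref{eq:21}, suppose $U\in\widetilde{\mathcal{U}}_{\mathrm{top}}\cap\widetilde{\mathcal{U}}_{\mathrm{max}}^{r}$.
\begin{itemize}
\item Since $1\in]e,1]\subseteq\widehat B_e$ (because $e\ne1$) and $0\in A_e$ (because $e\ne0$), condition $(\circledast)$ gives $U(1,0)=q_1\in\widehat B_e$.
\item Proposition~\ref{prop:r-max-blocks}(ii) gives $U(0,1)=0\in A_e$.
\item By commutativity, $U(0,1)=U(1,0)$ would then lie in $A_e\cap\widehat B_e$, which is empty by Lemma~\ref{lem:top-block-closure}.
\end{itemize}
The second identity is dual. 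Condition $(\circledcirc)$ with $x=0\in\widehat A_e$ and $y=1\in B_e$ gives $U(0,1)=p_0\in\widehat A_e$. Proposition~\ref{prop:r-min-properties}(i) gives $U(1,0)=1\in B_e$. This contradicts the partition of Lemma~\ref{lem:bottom-block-closure}.

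For \eqref{eq:22}, memberships $U_{\mathrm{min}}\in\widetilde{\mathcal{U}}_{\mathrm{min}}^{r}$ and $U_{\mathrm{max}}\in\widetilde{\mathcal{U}}_{\mathrm{max}}^{r}$ were already established in Example~\ref{ex:proper-trellis-r-classes}, together with the fact that both are uninorms. So only the starred top and bottom conditions remain. In that example the regions are
\[
A_e=\{0,a,b\},\qquad \widehat B_e=\{c,1\},\qquad I_e^2\cup I_e^3=\{c\},\qquad ]e,1]=\{1\},
\]
\[
\widehat A_e=\{0,a,b,c\},\qquad B_e=\{1\},\qquad I_e^1\cup I_e^3=\{a,c\},\qquad [0,e[=\{0,b\}.
\]

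For $U_{\mathrm{min}}$:
\begin{itemize}
\item Row $c$ of the table gives $U_{\mathrm{min}}(c,y)=c$ for $y\in\{0,a,b\}$, which is (A1) and also $(\circledast)$ with $q_c=c$.
\item Row $1$ gives $U_{\mathrm{min}}(1,y)=1$ on $A_e$, which is (A2) with $m_1=1$ and $(\circledast)$ with $q_1=1$.
\item (A3) reduces to the identity $c\vee1=c\vee m_1$, since $m_1=1$.
\end{itemize}

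For $U_{\mathrm{max}}$:
\begin{itemize}
\item Column $1$ of the table gives $U_{\mathrm{max}}(x,1)=x$ for every $x\in\{0,a,b,c\}=\widehat A_e$. This yields $(\circledcirc)$ with $p_x=x$, and (B1) for $x\in\{a,c\}$.
\item It also yields (B2) with $n_0=0$ and $n_b=b$, both in $[0,e[$.
\item (B3) is then the trivial identity $x\wedge y=x\wedge y$.
\end{itemize}

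No step is a genuine obstacle. The only points needing care are the correct identification of the regions in the example and checking that $q_x\in\widehat B_e$ and $p_x\in\widehat A_e$ as $(\circledast)$ and $(\circledcirc)$ require; both follow directly from the listed values.
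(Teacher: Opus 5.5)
Your proposal is correct and follows essentially the same route as the paper: you evaluate the single pair $U(0,1)=U(1,0)$ to force a contradiction between $A_e$ and $\widehat B_e$ (respectively $\widehat A_e$ and $B_e$), and you read (A1)--(A3) and (B1)--(B3) directly off the tables of Example~\ref{ex:proper-trellis-r-classes}. The only cosmetic difference is that you obtain $U(0,1)=0$ from the annihilator property in Proposition~\ref{prop:r-max-blocks}(ii), whereas the paper uses the defining identity of $\widetilde{\mathcal{U}}_{\mathrm{max}}^{r}$ directly; this changes nothing.
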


\begin{proof}
To prove the first equality in~\eqref{eq:21}, suppose that 
$
U\in
\widetilde{\mathcal{U}}_{\mathrm{top}}
\cap\widetilde{\mathcal{U}}_{\mathrm{max}}^{r}.
$
Membership in $\widetilde{\mathcal{U}}_{\mathrm{top}}$ and its
mixed-region formula give 
$
U(1,0)\in\widehat B_e.
$
On the other hand, membership in
$\widetilde{\mathcal{U}}_{\mathrm{max}}^{r}$, together with
commutativity, yields 
$
U(1,0)=U(0,1)=0\in A_e.
$
This contradicts $A_e\cap\widehat B_e=\varnothing$.  
For the second equality, membership in
$\widetilde{\mathcal{U}}_{\mathrm{bot}}$
gives $U(0,1)\in\widehat A_e$, whereas membership in
$\widetilde{\mathcal{U}}_{\mathrm{min}}^{r}$ gives $U(0,1)=U(1,0)=1\in B_e$.

For (\ref{eq:22}), 
use the sets determined in
Example~\ref{ex:proper-trellis-r-classes}. The $U_{\mathrm{min}}$ table
satisfies
$U_{\mathrm{min}}(c,a)=c$ and $U_{\mathrm{min}}(1,a)=1$ for every
$a\in A_e$, and $c\vee1=1$; hence it satisfies
\textup{(A1)--(A3)}. The
$U_{\mathrm{max}}$ table satisfies $U_{\mathrm{max}}(x,1)=x$ for
$x\in\widehat A_e$, so \textup{(B1)--(B3)} follow, with $n_x=x$ on
$[0,e[$. Their memberships in
$\widetilde{\mathcal{U}}_{\mathrm{min}}^{r}$ and
$\widetilde{\mathcal{U}}_{\mathrm{max}}^{r}$, respectively, were already
verified in that example.
\end{proof}

The preceding discussion concerns the relationships among the four classes
within the bounded-trellis setting. To place these results in the broader
framework of bounded psosets, we now clarify their precise relationship with
the classes introduced in~\cite{KL26}.
\begin{remark}\label{rem:relation-with-KL26}
The relation between the classes considered here and those introduced on
bounded psosets in~\cite{KL26} should be understood in two different
directions. First, every bounded trellis is a bounded psoset, so the
underlying structures considered in the present paper form a subclass of
those studied in~\cite{KL26}.

For a fixed bounded trellis $\mathbb P$ and neutral element $e$, the defining
mixed-region conditions of
$\mathcal{U}_{\mathrm{max}}^{\mathrm{ps}}$ and
$\mathcal{U}_{\mathrm{min}}^{\mathrm{ps}}$ in~\cite{KL26} can be written as
\[
 U(x,a)=x
 \qquad
 \bigl((x,a)\in\widehat B_e\times A_e\bigr)
\]
and
\[
 U(x,b)=x
 \qquad
 \bigl((x,b)\in\widehat A_e\times B_e\bigr),
\]
respectively. On the other hand, the corresponding mixed-region values in
$\widetilde{\mathcal{U}}_{\mathrm{top}}$ and
$\widetilde{\mathcal{U}}_{\mathrm{bot}}$ are
\[
 U(x,a)=\iota_U(x)
 \quad\text{and}\quad
 U(x,b)=\kappa_U(x),
\]
where the uniquely recovered maps are
\[
 \iota_U(x)=U(0,x)
 \qquad\text{and}\qquad
 \kappa_U(x)=U(x,1).
\]
Consequently,
\[
 U\in\mathcal{U}_{\mathrm{max}}^{\mathrm{ps}}
 \quad\Leftrightarrow\quad
 U\in\widetilde{\mathcal{U}}_{\mathrm{top}}
 \ \text{and}\
 \iota_U=\operatorname{id}_{\widehat B_e},
\]
and, dually,
\[
 U\in\mathcal{U}_{\mathrm{min}}^{\mathrm{ps}}
 \quad\Leftrightarrow\quad
 U\in\widetilde{\mathcal{U}}_{\mathrm{bot}}
 \ \text{and}\
 \kappa_U=\operatorname{id}_{\widehat A_e}.
\]
Thus, on bounded trellises, the psoset classes
$\mathcal{U}_{\mathrm{max}}^{\mathrm{ps}}$ and
$\mathcal{U}_{\mathrm{min}}^{\mathrm{ps}}$ are the identity-operator
subcases of the present  classes $\widetilde{\mathcal{U}}_{\mathrm{top}}$ and
$\widetilde{\mathcal{U}}_{\mathrm{bot}}$,  respectively.

For the  classes $\widetilde{\mathcal{U}}_{\mathrm{max}}^{r}$ and  $\widetilde{\mathcal{U}}_{\mathrm{min}}^{r}$, the relation is more direct. By commutativity and
the common region notation,
\[
 \mathcal{U}_{\mathrm{max}r}^{\mathrm{ps}}
 =
 \widetilde{\mathcal{U}}_{\mathrm{max}}^{r},
 \qquad
 \mathcal{U}_{\mathrm{min}r}^{\mathrm{ps}}
 =
 \widetilde{\mathcal{U}}_{\mathrm{min}}^{r}
\]
whenever the underlying bounded psoset is a bounded trellis. Hence the
representation theorems obtained here provide complete and uniquely
recoverable descriptions of these two psoset classes under the additional
trellis assumption. Accordingly, the present results are neither a mere restriction nor a
direct generalization of those in~\cite{KL26}: the underlying structure
is more specialized, the classes
$\widetilde{\mathcal{U}}_{\mathrm{top}}$ and
$\widetilde{\mathcal{U}}_{\mathrm{bot}}$ are broader, and the
representations of
$\widetilde{\mathcal{U}}_{\mathrm{max}}^{r}$ and
$\widetilde{\mathcal{U}}_{\mathrm{min}}^{r}$ additionally provide
unique recovery of their components.

\end{remark}

\begin{corollary}[Bounded-lattice specialization]
\label{cor:four-class-lattice-specialization}
Assume that $\unlhd$ is transitive, and put
\[
L=(X,\unlhd,\wedge,\vee,0,1).
\]
Then $L$ is a bounded lattice, every
$e\in X\setminus\{0,1\}$ belongs to $K$, and
\[
 I_e^1=I_e^2=\varnothing,\qquad I_e^3=I_e.
\]
Consequently,
\[
 \begin{gathered}
 A_e=[0,e[,\qquad B_e=]e,1],\\
 \widehat A_e=[0,e[\cup I_e=X\setminus[e,1],\qquad
 \widehat B_e=]e,1]\cup I_e=X\setminus[0,e],
 \end{gathered}
\]
and the six trellis classes considered in Part~I and Part~II coincide with
their corresponding untilded bounded-lattice classes. Under these
identifications, the construction and representation theorems of Part~I and
Theorems~\ref{thm:r-max-representation} and
\ref{thm:r-min-representation} specialize to valid representations of the
four principal bounded-lattice classes. Their component domains and recovery
formulas take the bounded-lattice forms studied in~\cite{ZQ26tb,ZQ26r}.
\end{corollary}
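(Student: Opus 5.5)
The plan has three stages. First, settle the order-theoretic facts under transitivity. Then show that the tilded classes coincide with the untilded ones. Finally, specialize the theorems.

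\textbf{Order-theoretic facts.} If $\unlhd$ is transitive, it is reflexive, antisymmetric and transitive, hence a partial order. Every pair already has a meet and a join, so $L$ is a bounded lattice. Moreover, $x\lesssim y$ collapses to $x\unlhd y$, since a finite path $x=x_0\unlhd\cdots\unlhd x_n=y$ telescopes by transitivity.

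From this, every element is middle-transitive. It also follows that no non-trivial cycle exists: a cycle would give $x\unlhd y$ and $y\unlhd x$ for distinct elements, contradicting antisymmetry. Hence $K=X$, and every $e\in X\setminus\{0,1\}$ lies in $K$.

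For $x\in I_e$ we have $x\parallel e$, so neither $x\lesssim e$ nor $e\lesssim x$ holds. This gives $I_e^1=I_e^2=\varnothing$ and $I_e^3=I_e$. Substituting into the definitions yields $A_e=[0,e[$ and $B_e=]e,1]$. The formulas $\widehat A_e=X\setminus[e,1]$ and $\widehat B_e=X\setminus[0,e]$ then follow from the partition $X=[0,e[\cup\{e\}\cup]e,1]\cup I_e$, using $[0,e]=[0,e[\cup\{e\}$ and $[e,1]=\{e\}\cup]e,1]$.

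\textbf{Coincidence of the six classes.} I will compare the defining conditions of Definitions~\ref{def:top-bottom-classes} and~\ref{def:umaxr-uminr-trellis} with their bounded-lattice counterparts in \cite{ZQ26tb,HP21}. After substituting the region identities, each trellis condition reads literally as the corresponding lattice condition. For $(\circledast)$ this is: for $x\in]e,1]\cup I_e$ there is $q_x\in]e,1]\cup I_e$ with $U(x,y)=q_x$ for all $y\in[0,e[$. For (A1) it becomes $U(x,y)=x$ for $x\in I_e$, $y\in[0,e[$. The uninorm axioms are the same, since the product pseudo-order is now the product order. This identification was already observed after Definition~\ref{def:top-bottom-classes} for the starred classes, and in Corollary~\ref{cor:r-disjoint-lattice} for the $r$-classes, so here I only need to cite those remarks and add $\widetilde{\mathcal U}_{\mathrm{top}}$ and $\widetilde{\mathcal U}_{\mathrm{bot}}$.

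\textbf{Specialization of the theorems.} Theorems~\ref{thm:top-construction}, \ref{thm:top-representation}, \ref{th:utop-star-representation}, their bottom duals, and Theorems~\ref{thm:r-max-representation} and~\ref{thm:r-min-representation} hold for every bounded trellis with $e\in K\setminus\{0,1\}$. They therefore hold verbatim on $L$, with the domains rewritten as above. For example, $\iota$ acts on $X\setminus[0,e]$, $U_L$ acts on $[0,e]^2$, and $D,E\subseteq I_e\times I_e$. The recovery formulas $\iota(x)=U(0,x)$, $\kappa(x)=U(x,1)$ and the restrictions are unchanged.

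The one point needing care is to state which hypotheses become redundant. Under transitivity, several of conditions (i)--(x) follow from others. In regional compatibility, comparisons such as $a\unlhd c$ for $a\in]0,e[$, $c\in I_e$ become automatic consequences of the nonemptiness criterion in Proposition~\ref{prop:r-nonempty}. Since the corollary only claims that the specialized statements are valid representations in the forms of \cite{ZQ26tb,ZQ26r}, I expect it suffices to note that the component domains and recovery formulas match. I would not attempt to prove that the condition lists are literally identical. That identification is the main potential obstacle, and I would handle it by restricting the claim to the form of the components rather than to a term-by-term match of the conditions.
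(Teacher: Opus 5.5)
Your proposal is correct and follows essentially the same route as the paper. Transitivity collapses $\lesssim$ to $\unlhd$, which yields the lattice structure, membership in $K$, the vanishing of $I_e^1,I_e^2$, and the region identities; you are slightly more explicit than the paper in noting that middle-transitivity is automatic. The class identifications and the specialized representations then follow by direct substitution, with only a small caution on a side remark you do not use: the comparison $a\unlhd c$ for $a\in]0,e[$ and $c\in I_e$ is forced only when the class is nonempty, so under transitivity regional order compatibility remains a genuine hypothesis in the construction direction, not a redundant one.
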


\begin{proof}
Under transitivity, a finite directed path from $x$ to $y$ contracts to the
direct comparison $x\unlhd y$, while every direct comparison is already a
path. Thus $x\lesssim y\Leftrightarrow x\unlhd y$ for all
$x,y\in X$. Elements incomparable
with $e$ neither reach $e$ nor are reachable from $e$, which proves the
displayed identities. A transitive pseudo-order is a partial order; since the trellis
already has binary meets and joins, $L$ is a bounded lattice. Transitivity and
antisymmetry also exclude non-trivial cycles, so every choice of
$e\in X\setminus\{0,1\}$ satisfies the standing condition.

Substituting the displayed sets into Definitions~\ref{def:top-bottom-classes}
and~\ref{def:umaxr-uminr-trellis} gives the asserted class identifications.
The product pseudo-order becomes the ordinary product order. By
Definition~\ref{def:regional-order-compatibility}, regional order compatibility
then supplies the comparisons required for monotonicity when the two
pairs belong to different regions, while
the associativity conditions and all recovery formulas are unchanged. Direct
substitution in the four representation formulas proves the result.
\end{proof}

\section{Conclusion}\label{sec:conclusion}

In this paper, under the assumption
$e\in K\setminus\{0,1\}$, we investigated the representation of the classes
$\widetilde{\mathcal{U}}_{\mathrm{top}}$,
$\widetilde{\mathcal{U}}_{\mathrm{bot}}$,
$\widetilde{\mathcal{U}}_{\mathrm{max}}^{r}$, and
$\widetilde{\mathcal{U}}_{\mathrm{min}}^{r}$ of uninorms on bounded
trellises. The main contributions can be summarized as follows.
\begin{enumerate}[label=\textup{(\roman*)},leftmargin=*,itemsep=2pt]
\item For $e\in K\setminus\{0,1\}$, we adopt and systematically use the
decomposition of $I_e$ into $I_e^1,I_e^2$, and $I_e^3$. It determines the
sets $A_e$, $B_e$, $\widehat A_e$, and $\widehat B_e$ used throughout both
parts. Proposition~\ref{prop:neutral-middle-transitive} also shows that the
neutral element $e$ of a uninorm is necessarily middle-transitive,
whereas the condition $e\in K\setminus\{0,1\}$ is an additional
restriction imposed in this study.


\item For $\widetilde{\mathcal{U}}_{\mathrm{top}}$, we establish explicit compatibility
conditions under which an interior operator, a lower component uninorm, and
an increasing, associative, and commutative upper operation combine to form a
uninorm. We also prove the converse and show that all three components are
uniquely determined by the resulting uninorm. A dual analysis gives the
corresponding representation of $\widetilde{\mathcal{U}}_{\mathrm{bot}}$.
Two additional conditions for each subclass characterize
$\widetilde{\mathcal{U}}_{\mathrm{top}}^{\star}$ and
$\widetilde{\mathcal{U}}_{\mathrm{bot}}^{\star}$.

\item \rev{For $\widetilde{\mathcal{U}}_{\mathrm{max}}^{r}$, we determine the
parts of the operation forced by the class definition and isolate, within
$I_e^3\times I_e^3$, the pairs whose values lie in $A_e$ or equal $e$.
Pairs whose values lie in $A_e$ have a common value, while pairs whose value
is $e$ satisfy a symmetric unique-partner condition; all other pairs in
$\widehat B_e\times\widehat B_e$ determine a partial operation with values
in $\widehat B_e$. These components give an exact piecewise representation
and are uniquely recoverable from the uninorm.  Once the component maps are increasing, regional order compatibility
precisely captures the comparisons between distinct regions required
for the piecewise-defined operation to be increasing.
Together with all the other admissibility
conditions, the regional order-compatibility and associativity requirements
are necessary and sufficient for the assembled operation to belong to the
stated class. The case $\Delta_U^-=E_U^-=\varnothing$ follows as a corollary,
and the corresponding statements hold for
$\widetilde{\mathcal{U}}_{\mathrm{min}}^{r}$.}

\item We provide four finite examples on proper bounded trellises, each
serving a distinct purpose. Example~\ref{ex:finite-utop-nonstar}
exhibits a uninorm in
$\widetilde{\mathcal{U}}_{\mathrm{top}}$ that does not belong to
$\widetilde{\mathcal{U}}_{\mathrm{top}}^{\star}$, thereby proving that
$
\widetilde{\mathcal{U}}_{\mathrm{top}}^{\star}
\subsetneq
\widetilde{\mathcal{U}}_{\mathrm{top}}
$
can occur. Example~\ref{ex:finite-utop-star} illustrates the construction
and unique recovery of the components for
$\widetilde{\mathcal{U}}_{\mathrm{top}}^{\star}$ and clarifies the role
of the additional join condition. Example~\ref{ex:regional-compatibility}
shows that the conditions imposed separately on the component operations
do not, by themselves, guarantee monotonicity between different cases of
the piecewise definition. Finally,
Example~\ref{ex:proper-trellis-r-classes} realizes nonempty sets
$\Delta_U^-$, $\Delta_U^+$, and $E_U^-$; its dual construction
also yields an example with $E_U^+\ne\varnothing$. We also compare the four classes, clarify the precise
relationship between the classes considered here and the corresponding
classes of uninorms on bounded psosets introduced in~\cite{KL26},
  and show that their
representations specialize to the bounded-lattice setting in the transitive
case.

\end{enumerate}

Several problems remain for future research. It would be useful to determine
the logical relationships among the compatibility and associativity
conditions in the representation theorems and to identify smaller equivalent
systems when dependencies occur. Another
problem is to characterize the symmetric sets $E\subseteq I_e^3\times I_e^3$
satisfying the unique-partner condition that can occur as $E_U^\pm$, and to
obtain simpler sufficient conditions for the partial operations $R_U^\pm$.
Effective enumeration methods for finite trellises may help with both
questions. The present results concern the four specified classes under the
assumption $e\in K\setminus\{0,1\}$ and do not characterize all uninorms on
an arbitrary bounded trellis. By
Proposition~\ref{prop:neutral-middle-transitive}, middle-transitivity is
automatic for every neutral element in the adopted jointly increasing sense;
non-extremal neutral elements lying in non-trivial cycles remain outside the
present framework.
Proposition~\ref{prop:r-nonempty} gives exact nonemptiness criteria for
$\widetilde{\mathcal{U}}_{\mathrm{max}}^{r}$ and
$\widetilde{\mathcal{U}}_{\mathrm{min}}^{r}$, whereas analogous criteria stated
solely in terms of the bounded trellis $\mathbb P$ and a prescribed
element $e\in K\setminus\{0,1\}$
for
$\widetilde{\mathcal{U}}_{\mathrm{top}}$,
$\widetilde{\mathcal{U}}_{\mathrm{bot}}$,
$\widetilde{\mathcal{U}}_{\mathrm{top}}^{\star}$, and
$\widetilde{\mathcal{U}}_{\mathrm{bot}}^{\star}$ remain to be established.

\end{document}